\numberwithin{equation}{subsection}
\newtheorem{theorem}[subsection]{Theorem}
\newtheorem{definition}[subsection]{Definition}
\newtheorem{classification-theorem}[subsection]{Classification Theorem}
\newtheorem{decomposition-theorem}[subsection]{Decomposition Theorem}
\newtheorem{proposition-definition}[subsection]{Proposition-Definition}
\newtheorem{definition-proposition}[subsection]{Definition-Proposition}
\newtheorem{example-definition}[subsection]{Example-Definition}
\newtheorem{periodicity-conjecture}[subsection]{Periodicity Conjecture}
\newtheorem{lemma}[subsection]{Lemma}
\newtheorem{proposition}[subsection]{Proposition}
\newtheorem{corollary}[subsection]{Corollary}
\newtheorem{example}[subsection]{Example}
\newtheorem{remark}[subsection]{Remark}
\newtheorem{Definition-Proposition}[subsection]{D\'efinition-Proposition}
\newcommand{\reminder}[1]{}
\renewcommand{\mod}{\mathrm{mod}}
\newcommand{\Mod}{\mathrm{Mod}\,}
\newcommand{\proj}{\mathrm{proj}\,}
\newcommand{\add}{\mathrm{add} }
\newcommand{\tr}{\mathrm{tr}}
\newcommand{\pretr}{\mathrm{pretr} }
\newcommand{\cok}{\mathrm{cok} }
\newcommand{\im}{\mathrm{im} }
\renewcommand{\ker}{\mathrm{ker} }
\newcommand{\Q}{\mathbb{Q}}
\newcommand{\iso}{\xrightarrow{_\sim}}
\newcommand{\id}{\mathbf{1}}
\newcommand{\Id}{\mathrm{id}}
\newcommand{\Def}{\mathrm{def}\kern 0.1em}
\newcommand{\gl}{\mathrm{gl. dim}}
\newcommand{\dom}{\mathrm{dom. dim}}
\newcommand{\D}{\mathcal {D}}
\newcommand{\A}{\mathcal {A}}
\newcommand{\B}{\mathcal {B}}
\newcommand{\C}{\mathcal {C}}
\newcommand{\E}{\mathcal {E}}
\newcommand{\J}{\mathcal {J}}
\newcommand{\I}{\mathcal {I}}
\newcommand{\M}{\mathcal {M}}
\newcommand{\U}{\mathcal {U}}
\renewcommand{\P}{\mathcal P}
\newcommand{\X}{\mathcal X}
\newcommand{\Hom}{\mathrm{Hom}}
\newcommand{\Ext}{\mathrm{Ext}}
\newcommand{\End}{\mathrm{End}}
\renewcommand{\phi}{\varphi}
\renewcommand{\tilde}[1]{\widetilde{#1}}
\renewcommand{\Q}{\mathcal{Q}}
\begin{document}
\title[Auslander--Iyama correspondence for exact dg categories]{Auslander--Iyama correspondence for exact dg categories}
\author[Xiaofa Chen] {Xiaofa Chen}
\address{University of Science and Technology of China, Hefei, P.~R.~China}
\email{cxf2011@mail.ustc.edu.cn}

\subjclass[2020]{18G35, 18E20, 18G80, 16E35}
\date{\today}

\keywords{extriangulated category, $d$-Auslander category, exact dg category, derived dg category, subcategory stable under extensions, Auslander--Iyama correspondence}%

\begin{abstract} We extend Auslander--Iyama correspondence to the setting of exact dg categories. 
By specializing it to exact dg categories concentrated in degree zero, we obtain a generalization of the higher Auslander correspondence for exact categories due to Ebrahimi--Nasr-Isfahani (in the case of exact categories with split retractions).
\end{abstract}

\maketitle
%\dedicatory{}%
%\commby{}%

\section{Introduction}
Let $R$ be a commutative artinian ring. 
An Artin $R$-algebra is an $R$-algebra which is finitely generated as an $R$-module.
Let $A$ be an Artin $R$-algebra. 
We denote by $\mod A$ the category of finitely generated right $A$-modules. 
Recall that the {\em dominant dimension} of an abelian category $\A$ is the largest $n\in \mathbb N\cup \{\infty\}$ such that any projective object in $\A$ has an injective coresolution whose first $n$ terms are projective. 
The dominant dimension of $A$ is defined as the dominant dimension of $\mod A$. 
An Artin $R$-algebra $A$ is {\em representation-finite} if $\mod A$ has finitely many isomorphism classes of indecomposable objects. 
An Artin $R$-algebra $\Gamma$ is a (1-){\em Auslander algebra} if we have
\[
\gl \Gamma\leq 2\leq \dom \Gamma.
\]
%For a representation finite Artin algebra $A$, the endomorphism algebra $\End_{A}(M)$ for an additive generator $M$ of $\mod A$ is called an {\em Auslander algebra}.
Let us recall Auslander's classical theorem \cite{Auslander71}, which was the starting point and a key tool in developing Auslander--Reiten theory, see for example in \cite[Chapter VI, Proposition 5.11]{AuslanderReitenSmaloe95}.
\begin{theorem}[Auslander correspondence]
There is a bijection between the Morita classes of representation-finite Artin $R$-algebras $A$ and the Morita classes of Auslander $R$-algebras $\Gamma$. 
The bijection is induced by the map $M\mapsto \End_{A}(M)$, where $M$ is an additive generator for $\mod A$.
\end{theorem}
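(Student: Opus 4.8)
The plan is to produce the inverse map explicitly and to verify the two characterizing inequalities on each side, the forward direction being essentially a homological computation and the backward direction carrying the real content. Fix a representation-finite $A$, let $M$ be an additive generator of $\mod A$ and set $\Gamma=\End_A(M)$. Since $M$ contains every indecomposable, we have $\mod A=\add M$, and the functor $F=\Hom_A(M,-)$ restricts to an equivalence $\add M\iso\proj\Gamma$ identifying $\mod A$ with the projective $\Gamma$-modules. To prove $\gl\Gamma\leq 2$ I would take an arbitrary $X\in\mod\Gamma$ together with a projective presentation $F(M_1)\to F(M_0)\to X\to 0$ arising from a morphism $g\colon M_1\to M_0$ of $\mod A$. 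Setting $M_2=\ker g$, which again lies in $\add M$ precisely because $A$ is representation-finite, and applying the left exact functor $F$ yields an exact sequence
\[
0\to F(M_2)\to F(M_1)\to F(M_0)\to X\to 0
\]
with all three left-hand terms projective, so $\pd X\leq 2$.

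For $\dom\Gamma\geq 2$ I would use that the additive generator $M$ is in particular a generator; by Morita theory $M$ is then projective as a $\Gamma$-module, whence $F=\Hom_A(M,-)$, being right adjoint to the exact functor $-\otimes_\Gamma M$, preserves injectivity. Applying $F$ to a minimal injective copresentation $0\to N\to I^0\to I^1$ of an arbitrary $N\in\mod A$ and invoking left exactness produces an exact sequence $0\to F(N)\to F(I^0)\to F(I^1)$ whose first two terms are projective-injective; as $N$ ranges over $\mod A$ this shows every projective $\Gamma$-module has an injective coresolution with projective first two terms, i.e. $\dom\Gamma\geq 2$. Thus $\Gamma$ is an Auslander algebra. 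Well-definedness on Morita classes is then clear, since the Morita class of $\Gamma$ depends only on the additive category $\add M=\mod A$, hence only on the Morita class of $A$, and a different choice of additive generator changes $\Gamma$ only up to Morita equivalence.

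For the inverse I would start from an abstract Auslander algebra $\Gamma$ and recover the pair $(A,M)$ from its homological data via the Morita--Tachikawa correspondence: let $e$ be an idempotent with $\Gamma e$ a basic additive generator of the projective-injective $\Gamma$-modules, put $A=e\Gamma e$ and $M=e\Gamma$, and use that $\dom\Gamma\geq 2$ forces the double-centralizer property $\Gamma\cong\End_A(M)$ with $M$ a generator-cogenerator of $\mod A$. The remaining and most delicate step, which I expect to be the main obstacle, is to show that the hypothesis $\gl\Gamma\leq 2$ is equivalent to $\add M=\mod A$, so that $M$ is an additive generator and $A$ is representation-finite. One direction is the computation of the first paragraph; for the converse I would, given $Y\in\mod A$, transport it to the $\Gamma$-module $\Hom_A(M,Y)$, bound its projective dimension by $2$, and translate the resulting length-two projective resolution back along the fully faithful functor $\Hom_A(M,-)$ to realize $Y$ as a summand of an object of $\add M$. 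Tracking the two constructions through their defining idempotents and endomorphism rings then shows they are mutually inverse on Morita classes, which completes the bijection.
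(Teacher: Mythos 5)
Your forward direction is correct and standard: projectivization $F=\Hom_A(M,-)$, the kernel computation giving $\gl\Gamma\leq 2$, and the use of ${}_\Gamma M$ projective (from $M$ being a generator) to see that $F$ preserves injectives, giving $\dom\Gamma\geq 2$. Invoking Morita--Tachikawa for the double-centralizer property $\Gamma\cong\End_{e\Gamma e}(e\Gamma)$ when $\dom\Gamma\geq 2$ is also legitimate. (Note the paper itself does not prove this statement; it recalls it from Auslander's work, so the comparison here is with the standard argument of \cite{Auslander71} and \cite[Ch.~VI]{AuslanderReitenSmaloe95}.)

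However, the step you yourself flag as the main obstacle has a genuine gap as proposed. Translating a length-two projective resolution $0\to F(M_2)\to F(M_1)\to F(M_0)\to F(Y)\to 0$ back along $F$ does work (since $M$ is a generator, $F$ reflects epimorphisms and exactness in the relevant positions), but what it produces is only an exact sequence $0\to M_2\to M_1\to M_0\to Y\to 0$ in $\mod A$ with all $M_i\in\add M$, i.e.\ an $\add M$-resolution of $Y$. This does not realize $Y$ as a summand of an object of $\add M$: there is no splitting anywhere in sight, and indeed every $Y$ admits such a resolution once $\pd F(Y)\leq 2$, so no contradiction with $Y\notin\add M$ can be extracted from it. The correct argument is dual and uses the \emph{cogenerator} half of the Morita--Tachikawa data, which your sketch never exploits at this point: embed $Y$ via an injective copresentation $0\to Y\to I^0\to I^1$ with $I^0,I^1$ injective, hence in $\add M$; applying the left exact $F$ and taking $C=\cok\bigl(F(I^0)\to F(I^1)\bigr)$ yields an exact sequence
\[
0\to F(Y)\to F(I^0)\to F(I^1)\to C\to 0
\]
with $F(I^0),F(I^1)$ projective (even projective-injective) over $\Gamma$. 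Since $\gl\Gamma\leq 2$, dimension shifting forces $F(Y)$ to be projective, so $F(Y)\cong F(M')$ for some $M'\in\add M$, and the full faithfulness of $F$ (the restricted equivalence $\add M\iso\proj\Gamma$) gives $Y\cong M'\in\add M$. With this replacement your plan closes up; as written, the converse direction does not go through.
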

It is natural to ask whether it is possible to replace the number $2$, which appears in the definition of Auslander algebras, by any integer $\geq 2$. 
This was done by Iyama~\cite[Theorem 0.2]{Iyama07}.
Let $d\geq 1$ be any integer. 
Recall that an $A$-module $M$ is {\em $d$-cluster-tilting}~\cite{Iyama07a} if 
\begin{align*}
\add M&=\{X\in\mod A\mid\Ext^{i}(M,X)=0 \text{ for $1\leq i\leq d-1$} \}\\
&=\{Y\in\mod A\mid \Ext^{i}(Y,M)=0 \text{ for $1\leq i\leq d-1$} \}.
\end{align*}
Note that the notion of $1$-cluster-tilting module is equivalent to that of additive generator for $\mod A$. 
An Artin $R$-algebra $\Gamma$ is {\em $d$-Auslander} if we have
\[
\gl \Gamma\leq d+1\leq \dom \Gamma.
\]
Two pairs $(M,A)$ and $(N,B)$, where $A$ and $B$ are Artin $R$-algebras and $M$ (resp.~$N$) is a $d$-cluster-tilting $A$-module (resp.~$B$-module), are {\em equivalent}, if there is an equivalence of $R$-categories $\mod A\iso \mod B$ which restricts to an equivalence $\add M\iso \add N$.
\begin{theorem}[\cite{Iyama07}, Auslander--Iyama correspondence]
There is a bijection between the equivalence classes of pairs (A,M), where $A$ is an Artin $R$-algebras and $M$ is a
 $d$-cluster-tilting module, and the Morita classes of $d$-Auslander $R$-algebras $\Gamma$. 
The bijection is induced by the map $M\mapsto \End_{A}(M)$, where $M$ is $d$-cluster-tilting $A$-module.

\end{theorem}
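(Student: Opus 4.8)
The plan is to study the functor $F=\Hom_A(M,-)\colon \mod A\to\mod\Gamma$ attached to $\Gamma=\End_A(M)$ and to show that the two numerical invariants defining a $d$-Auslander algebra encode precisely the $d$-cluster-tilting property of $M$. First I would record the structural consequences of the hypothesis on $\M=\add M$: since $\Ext^i_A(M,I)=0$ for every injective $A$-module $I$ and all $i\geq 1$, the cluster-tilting equalities force $\inj A\subseteq\M$, and dually $\proj A\subseteq\M$, so that $M$ is a generator-cogenerator; moreover every $X\in\mod A$ admits an $\M$-resolution $0\to M_d\to\cdots\to M_0\to X\to 0$ and an $\M$-coresolution $0\to X\to M^0\to\cdots\to M^d\to 0$ with all terms in $\M$. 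The restriction of $F$ then gives an equivalence $\M\iso\proj\Gamma$, identifying the projective $\Gamma$-modules with the $FM'$, $M'\in\M$, and a short computation using $A\in\M$ shows that $F$ sends injective $A$-modules to projective-injective $\Gamma$-modules.

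For the forward direction I would compute the two invariants separately. The dominant dimension is the cleaner of the two: since $M$ is a generator-cogenerator, a Müller-type criterion identifies the condition $\dom\Gamma\geq d+1$ with the rigidity $\Ext^i_A(M,M)=0$ for $1\leq i\leq d-1$, which is part of the $d$-cluster-tilting hypothesis. For the global dimension I would apply $F$ to the length-$d$ $\M$-resolution of an arbitrary $X$ and run the syzygy computation of Auslander and Iyama: the rigidity of $M$ makes $F$ carry this resolution to an exact complex of projective $\Gamma$-modules, bounding $\pd_\Gamma FX$; since the first syzygy of a general $\Gamma$-module is governed by an object of the form $FX$ (the image of a presentation by projectives, realized through a map in $\M$), one extra syzygy step yields $\pd_\Gamma N\leq d+1$ for every $N$, hence $\gl\Gamma\leq d+1$. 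Together these give that $\Gamma$ is $d$-Auslander.

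For the converse I would reconstruct $(A,M)$ from a $d$-Auslander algebra $\Gamma$. As $\dom\Gamma\geq d+1\geq 2$, the basic projective-injective $\Gamma$-module $P$ is faithful and $\Gamma$ enjoys the double-centralizer property over $A=\End_\Gamma(P)$; thus $\Hom_\Gamma(P,-)$ realizes $\Gamma$ as $\End_A(M)$ for a canonical $A$-module $M$ that is a generator-cogenerator. It then remains to translate the numerical conditions on $\Gamma$ back into the two defining equalities for $M$: the Müller criterion turns $\dom\Gamma\geq d+1$ into the rigidity $\Ext^i_A(M,M)=0$ for $1\leq i\leq d-1$, while $\gl\Gamma\leq d+1$ must be converted, via dimension shifting along the resolutions furnished by the equivalence $\M\iso\proj\Gamma$, into the existence of the length-$d$ $\M$-(co)resolutions and the full characterizations $\add M=\{X\mid\Ext^i_A(M,X)=0,\ 1\leq i\leq d-1\}=\{Y\mid\Ext^i_A(Y,M)=0,\ 1\leq i\leq d-1\}$. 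Finally I would check that the two assignments are mutually inverse and compatible with the equivalence relation on pairs, completing the bijection.

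The step I expect to be the main obstacle is this last, converse, passage: extracting the complete $d$-cluster-tilting property of $M$ --- in particular both vanishing descriptions of $\add M$, not merely rigidity --- from the bare inequalities $\gl\Gamma\leq d+1\leq\dom\Gamma$. This requires a precise interplay between the dominant dimension, which measures how far the projective $\Gamma$-modules see the injectives, and the global dimension, which bounds the length of projective resolutions, and it does not follow formally from the equivalence $\M\iso\proj\Gamma$; making the dimension-shifting and the double-centralizer reflexivity interact correctly is the technical heart of the argument.
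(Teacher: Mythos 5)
Your proposal is correct in outline, but note that the paper never proves this statement directly: it is quoted as background from \cite{Iyama07}, and the paper instead \emph{recovers} it as the degree-zero, abelian specialization of its main Theorem~\ref{thm:d-Auslander correspondence}, via Proposition~\ref{prop:abelian} and Example~\ref{exm:Auslander--Iyama correspondence}. Your argument is the classical module-theoretic route, essentially Iyama's original one: rigidity plus the generator-cogenerator property (which you correctly extract from the two cluster-tilting equalities) give $\dom \Gamma\geq d+1$ by M\"uller's criterion; transporting the $\add M$-resolutions along $\add M\iso \proj\Gamma$ and splicing with one extra syzygy gives $\gl \Gamma\leq d+1$; and conversely the Morita--Tachikawa double-centralizer property attached to $\dom\Gamma\geq 2$ reconstructs $(A,M)$, with M\"uller converting dominant dimension back into rigidity and dimension shifting along $\gl\Gamma\leq d+1$ yielding the two vanishing characterizations of $\add M$ --- the step you single out as the technical heart is indeed where the work lies in \cite{Iyama07}. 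The paper's approach is different in kind: it encodes the data as the pair $(\P,\I)$ with $\P=\add M$ and $\I$ the injective $\Lambda$-modules, and identifies the module category of the Auslander algebra with $H^0(\A^{(d+1)}_{\P,\I})$, the category of complexes $0\rightarrow P_{d+1}\rightarrow\cdots\rightarrow P_0$ over $\add M$ whose image under $\Hom_{\P}(-,I)$ is exact for every $I\in\I$; in the converse direction it sets $\Lambda=\End_{\Gamma}(I)$ for $I$ an additive generator of the projective-injectives and shows that $D\Hom_{\Gamma}(-,I)$ is fully faithful, producing the $d$-cluster-tilting module $D\Hom_{\Gamma}(\Gamma,I)$. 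Your route buys a self-contained, elementary proof at the level of Artin algebras; the paper's route avoids M\"uller's criterion entirely (dominant dimension is produced by the Horseshoe Lemma argument of Remark~\ref{rmk:selfdual} and Proposition~\ref{prop:d-Auslander1}) and generalizes uniformly to exact, abelian, extriangulated and dg settings. Two harmless slips to fix: your $\M$-(co)resolutions should have $d$ terms from $\M$ (the $(d-1)$-st kernel of a tower of right $\M$-approximations already lies in $\M$, by the vanishing characterization), and in both invocations of M\"uller's criterion you should state explicitly that it applies only to generator-cogenerators, a hypothesis you do establish on each side.
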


In this article, we extend Iyama's higher Auslander correspondence to the setting of exact dg categories. 
Let $k$ be a commutative ring. We fix an integer $d\geq 1$.
We refer to \cite{Chen23} for the necessary definitions and properties of exact dg categories and to \cite{NakaokaPalu19} for extriangulated categories.

An extriangulated category with enough projectives is {\em $d$-Auslander} if it has positive global dimension at most $d+1$ and dominant dimension at least $d+1$. It is {\em reduced} if moreover the only projective-injectives are $0$.
An exact dg category $\A$ is {\em $d$-Auslander} if the extriangulated category $H^0(\A)$ is $d$-Auslander.

Let $(\P,\I)$ be a pair where $\P$ is a connective additive dg category and $\I$ is an additive dg subcategory of $\P$.
For $n\geq 0$, we denote by $\A^{(n)}_{\P,\I}$ 
%{\color{red}(we use this notation in order to avoid confusion with the degree n component of the Hom complexes)} 
 the full dg subcategory of $\pretr(\P)$ consisting of the objects in 
\[
\P\ast\Sigma\P\ast\cdots\ast\Sigma^{n-1}\P\ast\Sigma^n\P\cap \ker\Ext^{\geq 1}(-,\I).
\]
The pair $(\P,\I)$ is called {\em $d$-cluster tilting} if $H^0(\I)$ is covariantly finite in $H^0(\A^{(d)}_{\P,\I})$.

\begin{theorem}[=Theorem~\ref{thm:d-Auslander correspondence}]\label{intro:d-Auslander correspondence}
There is a bijective correspondence between the following:
\begin{itemize}
\item[(1)] equivalence classes of connective exact dg categories $\A$ which are $d$-Auslander;
\item[(2)] equivalence classes of $d$-cluster tilting pairs $(\P,\I)$.
\end{itemize}
The bijection from $(1)$ to $(2)$ sends $\A$ to the pair $(\P,\I)$ formed by  the full dg subcategory $\P$ on the projectives of $\A$ and its full dg subcategory $\I$ of projective-injectives. The inverse bijection sends $(\P,\I)$ to the $\tau_{\leq 0}$-truncation of the  dg subcategory $\A^{(d+1)}_{\P,\I}$
 of $\pretr(\P)$.
\end{theorem}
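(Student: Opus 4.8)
The plan is to show that the two assignments of the statement are well defined and mutually inverse, the core being a \emph{reconstruction equivalence} realizing any connective $d$-Auslander exact dg category $\A$ as the $\tau_{\leq 0}$-truncation of $\A^{(d+1)}_{\P,\I}$, where $(\P,\I)$ is its pair of projectives and projective-injectives. Throughout I would compute higher extensions inside the pretriangulated hull, writing $\Ext^{i}(X,Y)=\Hom_{H^0(\pretr(\P))}(X,\Sigma^{i}Y)$, so that the layers $\A^{(n)}_{\P,\I}$ are controlled by the $\ast$-filtration measuring projective length together with the orthogonality $\Ext^{\geq 1}(-,\I)=0$.

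First I would treat the direction $(1)\to(2)$. Given $\A$ connective exact dg and $d$-Auslander, the full dg subcategory $\P$ on the projectives of $H^0(\A)$ is connective and additive, and $\I\subseteq\P$ is the additive dg subcategory on the projective-injectives; these are the extracted data. The substance is to prove that $(\P,\I)$ is $d$-cluster tilting, \ie that $H^0(\I)$ is covariantly finite in $H^0(\A^{(d)}_{\P,\I})$. Here I would use the two halves of the hypothesis in tandem: positive global dimension at most $d+1$ guarantees that every object of $\A$ has a projective resolution of length at most $d+1$ and hence lies in $\P\ast\Sigma\P\ast\cdots\ast\Sigma^{d+1}\P$, which is what identifies $\A$ with $\tau_{\leq 0}\A^{(d+1)}_{\P,\I}$; dominant dimension at least $d+1$ says that the injective coresolution of each projective starts with $d+1$ projective-injective terms, and this translates, through the $\ast$-description of the layer $\A^{(d)}_{\P,\I}$, into the existence of the required left $\I$-approximations.

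For the direction $(2)\to(1)$ I would start from a $d$-cluster tilting pair $(\P,\I)$, form the full dg subcategory $\A^{(d+1)}_{\P,\I}\subseteq\pretr(\P)$, and take its $\tau_{\leq 0}$-truncation $\A$. The first task is to equip $\A$ with a connective exact dg structure having $\P$ as its projectives and $\I$ as its projective-injectives; I expect this to follow from the general theory of exact dg structures on extension-closed truncated subcategories of a pretriangulated hull from \cite{Chen23}, since $\P\ast\Sigma\P\ast\cdots\ast\Sigma^{d+1}\P$ is extension closed. The $d$-Auslander property then splits into two computations: the $\ast$-length $d+1$ bounds the positive global dimension by $d+1$, whereas the covariant finiteness of $H^0(\I)$ in $H^0(\A^{(d)}_{\P,\I})$, propagated one step at a time along the layers, yields injective coresolutions of the projectives whose first $d+1$ terms lie in $\I$, giving dominant dimension at least $d+1$.

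Finally I would verify that the two constructions are mutually inverse on equivalence classes and compatible with equivalences of pairs and of exact dg categories; this is formal once the reconstruction equivalence of the first direction is available, together with the observation that the projectives and projective-injectives of $\tau_{\leq 0}\A^{(d+1)}_{\P,\I}$ recover $\P$ and $\I$. The main obstacle is precisely this reconstruction equivalence: proving that the two conditions cutting out $\A^{(d+1)}_{\P,\I}$ inside $\pretr(\P)$, namely membership in the extension closure of the shifts of $\P$ and the vanishing $\Ext^{\geq 1}(-,\I)=0$, together characterize the truncation of the objects of $\A$, and that the resulting identification is an equivalence of \emph{exact dg} categories rather than merely of the underlying additive or triangulated data. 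I expect the delicate point to be the interaction between $\tau_{\leq 0}$ and extension closure, \ie ensuring that no objects are lost or spuriously created when passing between the connective exact dg category and its image in the pretriangulated hull.
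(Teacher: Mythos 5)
Your plan follows the paper's route in all essentials: both directions are organized exactly as in Propositions~\ref{prop:d-Auslander1} and~\ref{prop:d-Auslander} --- the exact dg structure on $\tau_{\leq 0}\A^{(d+1)}_{\P,\I}$ is inherited from the extension-closed subcategory $\P\ast\Sigma\P\ast\cdots\ast\Sigma^{d+1}\P$ of $\pretr(\P)$; dominant dimension is traded against covariant finiteness of $H^0(\I)$ (in the paper this is the horseshoe-lemma induction of Remark~\ref{rmk:selfdual} and Lemma~\ref{lem:covariantlyfinite}); and the reconstruction rests on the quasi-equivalence $\pretr(\P)\iso\D^b_{dg}(\A)$, which uses the positive global dimension bound to resolve every object by $\P$ in at most $d+1$ steps, just as you propose.

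There is, however, one genuine gap, and it sits exactly at the point you flag as delicate without resolving it: the ``reconstruction equivalence'' as you state it is false on the nose. A connective $d$-Auslander exact dg category $\A$ is in general \emph{not} quasi-equivalent to $\tau_{\leq 0}\A^{(d+1)}_{\P,\I}$, and the projectives of the latter do \emph{not} recover $\P$: by Proposition~\ref{prop:weakly idempotent complete}, $H^0(\A^{(d+1)}_{\P,\I})$ is stable under kernels of retractions in $\tr(\P)$, whereas $H^0(\A)$ need not be (it need not be weakly idempotent complete), and correspondingly the projectives of $\A^{(d+1)}_{\P,\I}$ form the summand closure $\Q_{\P,\I}$ of $\P$ rather than $\P$ itself. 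This is precisely why the paper defines the equivalence relations of Definition~\ref{def:equ} through the closure $\overline{\A}$ under kernels of retractions and through the pairs $(\Q_{\P,\I},\J_{\P,\I})$, and why it must first prove Proposition~\ref{prop: equivalence d-Auslander} (invariance of the $d$-Auslander property under $\A\mapsto\overline{\A}$, itself a nontrivial horseshoe argument producing enough injectives and codominant dimension for $\overline{\A}$) before the two maps are even well defined on equivalence classes. Relatedly, your claim that mutual inverseness is ``formal once the reconstruction equivalence is available'' underestimates the content of Proposition~\ref{prop:d-Auslander}: showing that every object $X$ of the layer with $\Hom(X,\Sigma^{\geq 1}\I)=0$ lies in $\overline{\A}$ requires a $d$-step induction in which one splits off, at each stage, a summand using the vanishing against $\Sigma\I$ and replaces the relevant projective by an object of $\A$ admitting a strictly shorter $\I$-coresolution; the dominant-dimension hypothesis enters a second time here, not only through the covariant finiteness you extract in direction $(1)\to(2)$. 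Your outline would need this closure mechanism supplied to become a proof.
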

We refer to Definition~\ref{def:equ} for the definition of the equivalence relations mentioned in Theorem~\ref{intro:d-Auslander correspondence}. 
Theorem~\ref{intro:d-Auslander correspondence} also holds when $d=0$. 
In this case, the direction from (2) to (1) was known to Fang--Gorsky--Palu--Plamondon--Pressland, cf.~\cite[Theorem 4.1]{FangGorskyPaluPlamondonPressland23a}.
We show in Example~\ref{exm:Auslander--Iyama correspondence} that  the dg category $\A$ in Theorem~\ref{intro:d-Auslander correspondence}~(1) is the module category over an Artin algebra $\Gamma$ (so in particular $\A$ is concentrated in degree zero) if and only if the corresponding pair $(\P,\I)$ is such that $\P=\add M\subset \mod\Lambda$ for a $d$-cluster tilting module $M$ over some Artin algebra $\Lambda$ and $\I$ is the full subcategory of injective $\Lambda$-modules.

In \cite{HenrardKvammevanRoosmalen22}, the authors introduced the notion of {\em Auslander exact category}.
Ebrahimi and Nasr-Isfahani \cite{EbrahimiNasrIsfahani21} generalized it to the notion of {\em $d$-Auslander exact category}.
Using the notion of $d$-cluster tilting subcategory of an exact category introduced by Jasso \cite[Definition 4,13]{Jasso16}, they proved the following higher Auslander correspondence for exact categories.
\begin{theorem}[{\cite[Theorem 5.2]{EbrahimiNasrIsfahani21}}]\label{intro:EbrahimiNasrIsfahani21}
There is a bijection between the following:
\begin{itemize}
\item[(1)] Equivalence classes of $d$-cluster tilting subcategories of exact categories with enough injectives.
\item[(2)] Equivalence classes of exact categories $\E$ with enough projectives $\P=\proj(\E)$ satisfying the following conditions:
\subitem$\rm{(a)}$ $\gl(\E)\leq d+1\leq \dom(\E)$.
\subitem$\rm{(b)}$ Any morphism $X\rightarrow E$ with $E\in {^{\perp}\P}$ is admissible.
\subitem$\rm{(c)}$ $\P$ is admissibly covariantly finite, cf.~Definition~\ref{def:admissiblycovariantlyfinite}.
\end{itemize}
\end{theorem}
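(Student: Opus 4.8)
The plan is to deduce Theorem~\ref{intro:EbrahimiNasrIsfahani21} from the main result Theorem~\ref{intro:d-Auslander correspondence} by specializing to exact dg categories concentrated in degree zero. Recall that an exact category, regarded as an exact dg category $\A$ concentrated in degree zero, satisfies $H^0(\A)=\A$, and under this identification the canonical extriangulated structure on $H^0(\A)$ is the one induced by the exact structure. Thus the extriangulated notion of being $d$-Auslander specializes to condition (a), namely $\gl(\E)\leq d+1\leq\dom(\E)$, while conditions (b) and (c) should be exactly the additional requirements that force the relevant dg categories to remain concentrated in degree zero, so that everything stays inside the world of honest exact categories.

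First I would fix the dictionary between the two sides. Given a $d$-cluster tilting subcategory $\M$ of an exact category $\B$ with enough injectives in the sense of Jasso~\cite{Jasso16}, I form the pair $(\P,\I)$ with $\P=\M$, viewed as a connective additive dg category concentrated in degree zero, and $\I$ the full subcategory of injective objects of $\B$ (which lie in $\M$). The defining vanishing $\Ext^{i}_{\B}(\M,\M)=0$ for $1\leq i\leq d-1$ is precisely what I expect to guarantee that the twisted-complex categories $\A^{(d)}_{\P,\I}$ and $\A^{(d+1)}_{\P,\I}$ are again concentrated in degree zero, and that the covariant finiteness of $H^0(\I)$ inside $H^0(\A^{(d)}_{\P,\I})$ translates into the covariant finiteness of the injectives within the $d$-cluster tilting subcategory. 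This identifies $d$-cluster tilting subcategories with $d$-cluster tilting pairs concentrated in degree zero, matching (1) of Theorem~\ref{intro:EbrahimiNasrIsfahani21} with (2) of Theorem~\ref{intro:d-Auslander correspondence}.

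For the other side I would start from a $d$-cluster tilting pair $(\P,\I)$ concentrated in degree zero and analyze the output $\A=\tau_{\leq 0}\A^{(d+1)}_{\P,\I}$ of the inverse bijection. The key point, and what I expect to be the main obstacle, is to characterize exactly when $H^0(\A)$ is an ordinary exact category rather than a properly extriangulated one, and to show that this happens precisely under conditions (a), (b) and (c). Condition (a) is the $d$-Auslander property of $H^0(\A)$ read off through $\gl$ and $\dom$; condition (c), admissible covariant finiteness of $\P=\proj(\E)$ (Definition~\ref{def:admissiblycovariantlyfinite}), is what supplies enough admissible deflations so that the extriangulated structure becomes conflation-exact; and condition (b), admissibility of every morphism $X\to E$ with $E\in{}^{\perp}\P$, should be the exact-category shadow of the vanishing of negative $\Ext$-groups that keeps $\A$ concentrated in degree zero. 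Establishing the equivalence of these intrinsic dg conditions with the exact-category axioms of \cite{EbrahimiNasrIsfahani21}, and checking that the functor-category description of $\E$ there agrees with $\tau_{\leq 0}\A^{(d+1)}_{\P,\I}$, is the technical heart of the argument.

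Finally, I would verify that the two notions of equivalence are compatible: an equivalence of $d$-cluster tilting subcategories in the sense of \cite{EbrahimiNasrIsfahani21} should induce, and be induced by, an equivalence of the corresponding pairs $(\P,\I)$ in the sense of Definition~\ref{def:equ}, and likewise for the exact categories $\E$, so that the bijection of Theorem~\ref{intro:d-Auslander correspondence} descends to a bijection of equivalence classes. Throughout I would restrict to exact categories with split retractions, as in the hypotheses of \cite{EbrahimiNasrIsfahani21}, since this idempotent-completeness assumption is what ensures the degree-zero dg categories produced by the construction are well behaved and that the two notions of projective and injective objects coincide on the nose.
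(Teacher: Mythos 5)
This statement is not proved in the paper at all: it is quoted from \cite[Theorem 5.2]{EbrahimiNasrIsfahani21}, where it is established directly, with the bijection from (1) to (2) sending a $d$-cluster tilting subcategory $\M$ to the exact category $\mod_{adm}(\M)$ of Definition~\ref{def:admpre}. Your plan to re-derive it by specializing Theorem~\ref{intro:d-Auslander correspondence} to degree zero has a genuine gap, and it is precisely the one the paper itself flags right after the theorem: the equivalence relations do not match. In Definition~\ref{def:equ}, two exact dg categories are equivalent when $\overline{\A}\iso\overline{\B}$, i.e.\ only after closing under kernels of retractions, and pairs are likewise compared through $\Q_{\P,\I}$ and $\J_{\P,\I}$; specializing therefore yields at best a bijection between classes of exact categories up to equivalence of their weakly idempotent completions. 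Ebrahimi--Nasr-Isfahani's bijection uses equivalence of exact categories on the nose, a strictly finer relation, so the specialized statement is weaker than Theorem~\ref{intro:EbrahimiNasrIsfahani21} and cannot be upgraded to it without genuinely new input. This is exactly why the paper claims only a generalization ``in the case of exact categories with split retractions'' and formulates Corollary~\ref{intro:Auslander--Iyama correspondence for exact categories} with the coarser equivalence.

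A second concrete failure point: your matching of side (1) with $d$-cluster tilting pairs concentrated in degree zero relies on the content of Proposition~\ref{prop:wicclustertilting}, whose proof needs $\E$ weakly idempotent complete --- e.g.\ to conclude from the factorization of $i_{d+1}$ through $f_d$ that $f_d$ is an inflation via \cite[Proposition 7.6]{Buhler10} --- whereas side (1) of Theorem~\ref{intro:EbrahimiNasrIsfahani21} allows arbitrary exact categories with enough injectives. Relatedly, your closing remark that you would ``restrict to exact categories with split retractions, as in the hypotheses of \cite{EbrahimiNasrIsfahani21}'' misreads the situation: that restriction is not a hypothesis of their theorem but the regime in which the present paper's machinery recovers it; under that restriction you would prove only a special case, not the stated theorem. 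Finally, even in the restricted setting, your outline still owes a proof that conditions (b) and (c) of the statement are equivalent to conditions 2) and 3) of Proposition~\ref{intro:Quillenexact}; the paper never establishes such an equivalence of axiomatizations (its diagram only shows the two constructions agree on $d$-cluster tilting subcategories of weakly idempotent complete exact categories), and you acknowledge this step as the ``technical heart'' without supplying it.
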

The bijection from (1) to (2) sends $\M$, which is a $d$-cluster tilting subcategory of an exact category $\E$ with enough injectives, to the $d$-Auslander exact category $\mod_{adm}(\M)$, cf.~Definition~\ref{def:admpre}.

If we restrict in Theorem~\ref{intro:d-Auslander correspondence} (1) to the class of dg categories concentrated in degree zero, then we obtain an Auslander--Iyama correspondence for Quillen exact categories. 

\begin{proposition}[{Proposition~\ref{prop:Quillenexact}}]\label{intro:Quillenexact}
The exact dg categories in Theorem~\ref{intro:d-Auslander correspondence} (1) which are Quillen exact correspond to the $d$-cluster tilting pairs $(\P,\I)$ where 
\begin{itemize}
\item[1)]  $\P$ is concentrated in degree $0$;
\item[2)]  for each object $P$ in $\P$, there is a monomorphism $P^{\wedge}\rightarrow I^{\wedge}$ in $\Mod\P$ where $I\in\I$;
\item[3)]  a complex
\begin{equation}\label{introseq:P}
0\rightarrow P_{d+1}^{\wedge}\xrightarrow{f_{d}^{\wedge}} P_{d}^{\wedge}\rightarrow \ldots\xrightarrow{f_{1}^{\wedge}}  P_1^{\wedge}\xrightarrow{f_{0}^{\wedge}} P_0^{\wedge}
\end{equation}
in $\Mod\P$ is exact if the corresponding complex
\begin{equation}\label{introseq:Q}
 \Hom_{\P}(P_0,I)\rightarrow \Hom_{\P}(P_1,I)\rightarrow \ldots\rightarrow\Hom_{\P}(P_{d},I)\rightarrow \Hom_{\P}(P_{d+1},I)\rightarrow 0
\end{equation}
is exact for any $I\in\I$.
\end{itemize}
%By item 1), we identify $\pretr(\P)$ with $\C^b_{dg}(\P)$ and $\A_{\P,\I}^{(d+1)}$ is identified with the full dg subcategory of complexes
%\begin{equation}
%\begin{tikzcd}
%\ldots\ar[r]&0\ar[r]&P_{d+1}\ar[r]&P_{d}\ar[r]&\ldots\ar[r]&P_1\ar[r]&P_0\ar[r]&0\ar[r]&\ldots
%\end{tikzcd}
%\end{equation}
%where $P_i$ is in degree $-i$ and such that the corresponding complex (\ref{introseq:Q})
 %is exact.
 \end{proposition}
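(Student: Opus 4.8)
The plan is to reduce the statement to the claim that the exact dg category $\A=\tau_{\leq 0}\A^{(d+1)}_{\P,\I}$ attached to $(\P,\I)$ by Theorem~\ref{intro:d-Auslander correspondence} is \emph{concentrated in degree zero} and carries a genuine (Quillen) exact structure, and then to translate each of these two requirements into the module-theoretic conditions. Write $P^{\wedge}=\P(-,P)\in\Mod\P$ for the representable functor. Under the inverse correspondence the objects of $\A^{(d+1)}_{\P,\I}$ are exactly the complexes of projectives $X=[P_{d+1}\to\cdots\to P_0]$ in $\pretr\P$ whose associated complex of representables $X^{\wedge}=[P^{\wedge}_{d+1}\to\cdots\to P^{\wedge}_0]$ is \eqref{introseq:P}. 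Since $\P$ is a full dg subcategory of $\A$, condition 1) (that $\P$ be concentrated in degree $0$) is clearly necessary; granting it, $\Mod\P$ is an ordinary abelian category in which every $P^{\wedge}$ is a projective object, so that homological algebra of the $X^{\wedge}$ is available.

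First I would set up the dictionary between the defining condition $X\in\ker\Ext^{\geq 1}(-,\I)$ and the exactness of \eqref{introseq:Q}. Because each $P^{\wedge}_j$ is projective in $\Mod\P$, for $I\in\I$ one has $\Ext^{j}(X,I)=H^{j}\HOM(X^{\wedge},I^{\wedge})$, and by the Yoneda lemma $\HOM(X^{\wedge},I^{\wedge})$ is precisely the complex \eqref{introseq:Q}, placed in degrees $0,\dots,d+1$. Hence $X\in\A^{(d+1)}_{\P,\I}$ if and only if \eqref{introseq:Q} is exact for every $I\in\I$. This identifies condition 3) with the assertion that \emph{every} object $X$ of $\A^{(d+1)}_{\P,\I}$ has $X^{\wedge}$ exact in negative degrees, i.e. $X^{\wedge}$ is a projective resolution of $M_X:=\cok(P^{\wedge}_1\to P^{\wedge}_0)$.

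Next I would show that conditions 1) and 3) together are equivalent to $\A$ being concentrated in degree $0$. For the implication $\Leftarrow$, condition 3) gives $X^{\wedge}\cong M_X$ in $\D(\Mod\P)$ for every object, so for $X,Y\in\A^{(d+1)}_{\P,\I}$ and $n<0$ we obtain $H^{n}(\A(X,Y))=\Hom_{\D(\Mod\P)}(M_X,M_Y[n])=\Ext^{n}(M_X,M_Y)=0$; as $\tau_{\leq 0}$ already kills the positive part, $\A$ is concentrated in degree $0$. For the converse, evaluating at $P\in\P$ and using projectivity of $P^{\wedge}$ yields $H^{-j}(X^{\wedge})(P)=\Hom_{\D(\Mod\P)}(P^{\wedge},X^{\wedge}[-j])=H^{-j}(\A(P,X))=0$ for all $P$ and all $j\geq 1$; a functor vanishing on every object is zero, so $X^{\wedge}$ is exact in negative degrees and condition 3) holds.

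Finally I would account for condition 2), which encodes the passage from an extriangulated to a genuine Quillen exact structure, in the spirit of conditions (b)--(c) of Theorem~\ref{intro:EbrahimiNasrIsfahani21}. Since $\A$ is $d$-Auslander, its dominant dimension is at least $d+1$, providing for each projective $P$ an inflation $P\rightarrowtail I$ into a projective-injective $I\in\I$; if $\A$ is Quillen exact this inflation is a monomorphism in $H^0(\A)$, and applying $\Hom_{H^0(\A)}(Q,-)$ for $Q\in\P$ shows that $P^{\wedge}\to I^{\wedge}$ is a monomorphism in $\Mod\P$, which is exactly condition 2). Conversely, condition 2) furnishes, for every projective, an admissible monomorphism into a projective-injective, so that together with the concentration in degree $0$ established above the conflations of $\A$ become honest kernel--cokernel pairs and $H^0(\A)$ is Quillen exact. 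The main obstacle is precisely this last point: one must match the intrinsic class of conflations of the exact dg category $\tau_{\leq 0}\A^{(d+1)}_{\P,\I}$ with the admissible short exact sequences in $\Mod\P$, and verify that condition 2) is the extra input—beyond concentration in degree $0$—needed to make all inflations monomorphic with the expected cokernels.
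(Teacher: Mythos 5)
Your core computations are correct, and for the two main implications you take a genuinely cleaner route than the paper. The dictionary $\Ext^j(X,I)\cong H^j$ of \eqref{introseq:Q} is exactly the mechanism the paper exploits, but where the paper proves item 3) by an explicit induction --- building the object $X=X_0$ out of the complex \eqref{introseq:P} through successive triangles $X_{i+1}\to P_i\to X_i$ in $\tr(\P)$, checking $\Ext^{\geq 1}(X_i,\I)=0$ at each stage via exactness of \eqref{introseq:Q}, and then reading off exactness of \eqref{introseq:P} by applying $\Hom(Q,-)$ to the tower --- you get both directions at once from the identification $X^{\wedge}\simeq M_X$ in $\D(\Mod\P)$ and the K-projectivity of bounded complexes of representables: $\Hom_{\tr(\P)}(X,\Sigma^n Y)=\Ext^n(M_X,M_Y)=0$ for $n<0$, and conversely $H^{-j}(X^{\wedge})(P)=H^{-j}(\A(P,X))=0$. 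This is a correct and more economical replacement for the paper's ``straightforward to verify'' claim and its triangle inductions. Your derivation of 2) (via the inflation $P\rightarrowtail I$ supplied by dominant dimension, monic because inflations in a Quillen exact category are kernels) is equivalent to the paper's (which instead completes a left $H^0(\I)$-approximation $P\to I$ to a triangle $P\to I\to M\to \Sigma P$ and analyzes $\Hom(Q,\Sigma^{-1}M)$).

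The genuine gap is the final step, which you yourself flag as ``the main obstacle'' and leave unresolved --- and it rests on a misreading of where condition 2) enters. In this paper's framework, ``Quillen exact'' for a connective exact dg category means precisely ``concentrated in degree zero'': by the background theory of \cite{Chen23}, exact dg structures on a dg category concentrated in degree zero are the same as Quillen exact structures on its $H^0$, so conflations of $\tau_{\leq 0}\A^{(d+1)}_{\P,\I}$ are automatically kernel--cokernel pairs once degree-zero concentration is established; no further matching of conflation classes is needed, and condition 2) is \emph{not} the extra input for exactness. Accordingly, the paper never uses 2) in the converse: given 1) and 3) (plus the $d$-cluster-tilting hypothesis), it deduces concentration in degree zero directly, and 2) appears only as a necessary condition extracted in the forward direction. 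As written, your converse is incomplete on two counts: you assert but do not prove that 2) together with degree-zero concentration ``makes all inflations monomorphic with the expected cokernels'', and you never verify the forward implication that Quillen exactness of $\A$ forces concentration in degree zero (in the paper this is definitional, via the equivalence just cited). The fix is not to hunt for a role of 2) in the converse but to invoke the equivalence from \cite{Chen23}; with that in hand, your own argument that 1) and 3) are equivalent to degree-zero concentration, together with your derivation of 2) as a necessary condition, completes the proof.
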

%We also introduce the notion of $d$-cluster tilting pair in exact categories, cf.~Definition~\ref{def:d-cluster tilting}.
%Then the pairs satisfying conditions in Proposition~\ref{intro:Quillenexact} correspond to the $d$-cluster tilting pairs in exact categories, cf.~Proposition~\ref{prop:d-cluster tilting bijection1} and Proposition~\ref{prop:d-cluster tilting bijection2}.
\begin{corollary}[Corollary~\ref{cor:Auslander--Iyama correspondence for exact categories}]\label{intro:Auslander--Iyama correspondence for exact categories}%[Corollary~\ref{cor:Auslander--Iyama correspondence exact}]
There is a bijective correspondence between the following:
\begin{itemize}
\item[(1)] equivalence classes of exact categories $\A$ which are $d$-Auslander as extriangulated categories;
\item[(2)] equivalence classes of $d$-cluster tilting pairs $(\P,\I)$ satisfying the conditions in Proposition~\ref{intro:Quillenexact}.
\end{itemize}
The bijection from $(1)$ to $(2)$ sends $\A$ to the pair $(\P, \I)$ where $\P$ is  the full subcategory on the projectives of $\A$ and $\I$ is the full subcategory of projective-injectives. 
The inverse bijection sends $(\P,\I)$ to the $\tau_{\leq 0}$-truncation of the  dg subcategory $\A^{(d+1)}_{\P,\I}$
 of $\pretr(\P)$.
\end{corollary}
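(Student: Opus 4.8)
The plan is to obtain the Corollary as a direct restriction of the bijection furnished by Theorem~\ref{intro:d-Auslander correspondence}, using Proposition~\ref{intro:Quillenexact} to identify, on each side, the subclass cut out by the Quillen exactness condition. The key observation is that an ordinary Quillen exact category $\A$ is the same datum as its canonical dg enhancement, which is an exact dg category concentrated in degree zero; under this identification the associated extriangulated category $H^0$ of the enhancement recovers the extriangulated structure attached to the exact structure of $\A$. Thus ``$\A$ is $d$-Auslander as an extriangulated category'' is literally the condition that the dg enhancement of $\A$ is a $d$-Auslander exact dg category, so that item (1) of the Corollary is precisely the subclass of item (1) of Theorem~\ref{intro:d-Auslander correspondence} consisting of those connective exact dg categories that are both $d$-Auslander and Quillen exact.

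First I would make this identification precise in both directions. Given an exact category $\A$ which is $d$-Auslander as an extriangulated category, its dg enhancement is connective, concentrated in degree zero, $d$-Auslander, and Quillen exact, hence lies in Theorem~\ref{intro:d-Auslander correspondence}~(1); and conversely, a connective $d$-Auslander exact dg category $\B$ which is Quillen exact is equivalent, as an exact dg category, to the dg enhancement of the Quillen exact category $H^0(\B)$. I would also check that the equivalence relation of Definition~\ref{def:equ} restricts, on the Quillen exact objects, to the usual notion of equivalence of exact categories, so that the two notions of ``equivalence class'' appearing in item (1) agree.

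Next, Proposition~\ref{intro:Quillenexact} identifies the image of this subclass under the bijection of Theorem~\ref{intro:d-Auslander correspondence}: the Quillen exact objects of (1) correspond exactly to the $d$-cluster tilting pairs $(\P,\I)$ satisfying conditions 1)--3) of that Proposition. Therefore the bijection of Theorem~\ref{intro:d-Auslander correspondence} restricts to a bijection between the equivalence classes of $d$-Auslander exact categories and the equivalence classes of $d$-cluster tilting pairs satisfying those conditions, which is exactly the asserted correspondence of the Corollary. The explicit descriptions of the two maps --- sending $\A$ to the pair $(\P,\I)$ of projectives and projective--injectives, and sending $(\P,\I)$ to the $\tau_{\leq 0}$-truncation of $\A^{(d+1)}_{\P,\I}$ --- are inherited verbatim from Theorem~\ref{intro:d-Auslander correspondence}, since they are the same assignments.

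The only genuine verification, and hence the main obstacle, is already absorbed into Proposition~\ref{intro:Quillenexact}: one must know that the $\tau_{\leq 0}$-truncation of $\A^{(d+1)}_{\P,\I}$ is genuinely Quillen exact --- concentrated in degree zero and carrying the exact structure of an ordinary exact category --- precisely when conditions 1)--3) hold, with condition 1) controlling concentration in degree zero and conditions 2)--3) forcing the extriangulated structure to come from a Quillen exact structure with enough injectives. Granting that Proposition, the Corollary becomes formal: it is the restriction of an already-established bijection to matching subclasses on the two sides, together with the bookkeeping that the equivalence relations correspond. I would accordingly present the argument as a short assembly of Theorem~\ref{intro:d-Auslander correspondence} and Proposition~\ref{intro:Quillenexact}, spelling out the identification of exact categories with their degree-zero dg enhancements and leaving the substantive content where it belongs, in the Proposition.
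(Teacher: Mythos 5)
Your overall assembly is exactly the paper's: Corollary~\ref{cor:Auslander--Iyama correspondence for exact categories} is given there without a separate proof, precisely as the restriction of the bijection of Theorem~\ref{thm:d-Auslander correspondence} to the subclass identified by Proposition~\ref{prop:Quillenexact}, and you correctly place all the substantive content in that Proposition. There is, however, one promised verification in your write-up that would fail as stated: you say you would ``check that the equivalence relation of Definition~\ref{def:equ} restricts, on the Quillen exact objects, to the usual notion of equivalence of exact categories.'' It does not. By Definition~\ref{def:equ}, $\A\sim\B$ iff $\overline{\A}\iso\overline{\B}$, where $\overline{\A}$ is the closure of $H^0(\A)$ under kernels of retractions in $\D^b_{dg}(\A)$, i.e.\ a weak idempotent completion; so on exact categories the induced relation is ``the weakly idempotent completions are equivalent as exact categories,'' which is strictly coarser than plain equivalence of exact categories. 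The paper flags exactly this discrepancy when comparing with Theorem~\ref{intro:EbrahimiNasrIsfahani21} and accordingly restricts attention to weakly idempotent complete exact categories.

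This is not merely cosmetic: if you insisted on the finer relation, the correspondence would fail to be a bijection, because the composite $(1)\to(2)\to(1)$ sends $\A$ to $\tau_{\leq 0}\A^{(d+1)}_{\P,\I}$, whose $H^0$ is stable under kernels of retractions in $\pretr(\P)$ by Proposition~\ref{prop:weakly idempotent complete}; hence one recovers $\A$ only up to weak idempotent completion, never $\A$ itself when $\A$ is not weakly idempotent complete. The fix is simply to read item (1) of the Corollary with the equivalence relation of Definition~\ref{def:equ} (equivalently, to restrict to weakly idempotent complete exact categories, as the paper does), after which your restriction argument coincides with the paper's and the Corollary follows formally from Theorem~\ref{thm:d-Auslander correspondence} together with Proposition~\ref{prop:Quillenexact}.
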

Note that in Corollary~\ref{intro:Auslander--Iyama correspondence for exact categories} we consider exact categories to be equivalent if their weakly idempotent completions are equivalent as exact categories, while in~Theorem~\ref{intro:EbrahimiNasrIsfahani21}, the authors consider exact categories to be equivalent if they are equivalent as exact categories.
So we restrict ourselves to the class of weakly idempotent complete exact categories.
Let $\M$ be a $d$-cluster tilting subcategory of a weakly idempotent complete exact category $\E$ with enough injectives.
Let $\I$ be the full subcategory of injectives in $\E$.
Then the pair $(\M,\I)$ is a $d$-cluster tilting pair satisfying the conditions in Proposition~\ref{intro:Quillenexact}, cf.~Proposition~\ref{prop:wicclustertilting}.
We have the following diagram
\[
\begin{tikzcd}
\{\M\subset\E \mid \begin{matrix}\text{$\M$ a $d$-cluster tilting subcategory where }\\\text{$\E$ is a weakly idempotent complete exact category}\\ \text{with enough injectives}\end{matrix}\}/\sim\ar[d,hook]&\M\ar[d,mapsto]\ar[dd,mapsto, bend left=16ex,"\text{Theorem~\ref{intro:EbrahimiNasrIsfahani21}}"]\ar[dd,phantom,"\circlearrowleft"blue, bend left=11ex]\\
\{\text{$d$-cluster tilting pairs satisfying conditions in Proposition~\ref{intro:Quillenexact}}\}/\sim\ar[d,<->]&(\M,\I)\ar[d,mapsto,"\text{Theorem~\ref{intro:d-Auslander correspondence}}"swap]\\
\{\text{$d$-Auslander categories which are exact}\}/\sim&\tau_{\leq 0}\A_{\M,\I}^{(d+1)}
\end{tikzcd}
\]
Therefore we obtain a generalization of the higher Auslander correspondence for exact categories in Theorem~\ref{intro:EbrahimiNasrIsfahani21}.

There is another generalization of the class of Auslander algebras, namely the class of Gorenstein algebras $\Gamma$ which satisfy
\begin{equation}\label{ine:Auslander--Gorenstein}
\dom(\Gamma)\geq 2\geq \Id_{\Gamma}(\Gamma)
\end{equation}
where $\Id$ denotes the injective dimension.
The case when both inequalities are equalities was studied in \cite{AuslanderSolberg93d} using the notion of $\tau$-selfinjective algebras.
It was further generalized to the class of {\em $d$-minimal Auslander--Gorenstein algebras} in \cite{IyamaSolberg18} where the number $2$ in (\ref{ine:Auslander--Gorenstein}) is replaced by $d+1$.
Iyama--Solberg proved that there is a bijection between the Morita classes of $d$-minimal Auslander--Gorenstein algebras and equivalence classes of finite {\em $d$-precluster tilting subcategories}.
In \cite{Grevstad22}, the bijection is extended to the setting of exact categories. It would be interesting to study Auslander--Solberg correspondence for exact dg categories.

In this article, additive subcategories of additive categories are assumed to be closed under direct summands.
Throughout we fix an integer $d\geq 1$.
For an additive $k$-category $\P$, we denote by $\Mod \P$ the category of $k$-linear functors $\P^{op}\rightarrow k\mbox{-}\Mod$. For each object $P\in\P$, put $P^{\wedge}=\Hom_{\P}(?,P)\in \Mod\P$.
If $\X$ is a class of objects in $\P$, we write ${^{\perp}\X}\coloneqq\{P\in\P\mid \Hom_{\P}(P,X)=0\text{ for any $X\in\X$ }\}$.

\section{Preliminaries}
\begin{definition}[{\cite[Definition 2.2]{Jasso16}}]\label{def:nkernel}
Let $\C$ be an additive $k$-category and $d^0:X^0\rightarrow X^1$ a morphism in $\C$. An {\em $n$-cokernel} of $d^0$ is a sequence
\begin{equation*}
\begin{tikzcd}
(d^1,\ldots,d^n):X^1\ar[r,"d^1"]& X^2\ar[r,"d^2"]&\cdots\ar[r,"d^n"]&X^{n+1}
\end{tikzcd}
\end{equation*}
such that for all $Y\in\C$ the induced sequence 
\begin{equation*}
\begin{tikzcd}
0\ar[r]&\C(X^{n+1},Y)\ar[r]&\C(X^{n},Y)\ar[r]&\ldots\ar[r]&\C(X^1,Y)\ar[r]&\C(X^0,Y)
\end{tikzcd}
\end{equation*}
is exact. Equivalently, the sequence $(d^1,\ldots,d^n)$ is an $n$-cokernel of $d^0$ if for all $1\leq k\leq n-1$ the morphism $d^{k}$ is a weak cokernel of $d^{k-1}$, and $d^n$ is moreover a cokernel of $d^{n-1}$. The concept of {\em $n$-kernel} of a morphism is defined dually.
\end{definition}
\begin{lemma}[Horseshoe lemma]\label{lem:horseshoe}
Let $(\C,\mathbb E,\mathfrak s)$ be an extriangulated category. Suppose we have the black part in the following diagram in $\C$
\begin{equation}\label{dia:horseshoe}
\begin{tikzcd}
X'\ar[d,tail,"r"swap]\ar[r,red,tail,"{f'}"]&{\color{red} Y'}\ar[d,red,tail,"u"]\ar[r,red,two heads,"{g'}"]&{\color{red}Z'}\ar[d,tail,"s",red]\ar[r,dashed,"\nu"]&\;\\
P\ar[d,two heads,"p"swap]\ar[r,tail,red,"x{=}{[}1{,}0{]}^{\intercal}"]&{\color{red}P\oplus Q}\ar[d,red,two heads,"v"]\ar[r,"y{=}{[}0{,}1{]}",red,two heads]&Q\ar[d,two heads,"q"]\ar[r,dashed,"0"]&\;\\
X\ar[r,tail,"f"swap]\ar[d,dashed,"\delta"]&Y\ar[r,two heads,"g"swap]\ar[d,dashed,"\psi",red]&Z\ar[d,dashed,"\mu",red]\ar[r,dashed,"\tau"swap]&\;\\
\;&\;&\;&
\end{tikzcd}
\end{equation}
where both $P$ and $Q$ are projective in $\C$. 
%Replacing $q:Q\rightarrow Z$ by $(q,0)\colon Q\oplus Q\rightarrow Z$ if necessary (when $\C$ is not weakly idempotent complete), 
We may complete it as above by the red part where $(r,u,s)$, $(p,v,q)$, $(f',x,f)$ and $(g',y,g)$ are morphisms of $\mathbb E$-triangles.
\end{lemma}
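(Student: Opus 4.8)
The plan is to prove the Horseshoe Lemma by building the middle row and the connecting vertical $\mathbb{E}$-triangles from the given data, using projectivity of $P$ and $Q$ at the crucial lifting steps. I would work with the $3\times 3$ diagram as an extension of the axioms governing extriangulated categories; the statement is essentially a compatibility result saying that given the left column $\mathbb{E}$-triangle $X'\xrightarrow{r}P\xrightarrow{p}X\dashrightarrow$ (with $P$ projective) and the right column $\mathbb{E}$-triangle $Z'\xrightarrow{s}Q\xrightarrow{q}Z\dashrightarrow$ (with $Q$ projective), together with the bottom $\mathbb{E}$-triangle $X\xrightarrow{f}Y\xrightarrow{g}Z\dashrightarrow$, one can fill in the top row and the middle column so that the middle row is the split $\mathbb{E}$-triangle on $P\oplus Q$ and all four sequences become $\mathbb{E}$-triangles compatibly.

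First I would construct the middle column $\mathbb{E}$-triangle $P\oplus Q\xrightarrow{v}Y\dashrightarrow$. Since $Q$ is projective, the surjection $g\colon Y\twoheads Z$ admits a section-up-to-deflation: more precisely, the identity of $Q$ composed with $q\colon Q\twoheads Z$ lifts along the deflation $g$ to a morphism $Q\to Y$, using that $\mathbb{E}(Q,X)=0$ by projectivity. Dually, the inflation $f\colon X\to Y$ together with the left-column data gives a map $P\to Y$. Assembling these into $v=[v_P,v_Q]\colon P\oplus Q\to Y$ yields a deflation whose kernel-object is the desired $Y'$; the fact that $v$ is a deflation follows because its two components cover $f$ and a lift of $q$, so the associated realization is a genuine $\mathbb{E}$-triangle. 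I would then invoke the $3\times 3$ Lemma (or its extriangulated analogue, which holds in any extriangulated category) to organize the four $\mathbb{E}$-triangles $(r,u,s)$, $(p,v,q)$, $(f',x,f)$, $(g',y,g)$ into the commutative grid with the prescribed split middle row $P\xrightarrow{[1,0]^{\intercal}}P\oplus Q\xrightarrow{[0,1]}Q$.

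The key technical point, and the step I expect to be the main obstacle, is verifying that the \emph{top} row $X'\xrightarrow{f'}Y'\xrightarrow{g'}Z'$ is again an $\mathbb{E}$-triangle and that the connecting morphism $u\colon Y'\to P\oplus Q$ is an inflation with cokernel-object $Y$. This requires the compatibility axiom (ET4) and its dual applied to the overlapping $\mathbb{E}$-triangles, together with an application of the obscure axiom / $3\times 3$ determination to pin down that the induced realization of $Y'\dashrightarrow$ matches the composite of the given ones. Here projectivity is used a second time: because $\mathbb{E}^{\geq 1}(P\oplus Q,-)$ interacts well with the splitting of the middle row, the extension class of the top row is forced, and the nine-term diagram closes. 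I would handle this by first establishing the two adjacent $\mathbb{E}$-triangles via (ET4), then using the uniqueness-up-to-isomorphism of the total complex to identify the remaining connecting map $\nu$.

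Finally I would record that all the dashed connecting maps ($\delta,\psi,\mu,\tau,\nu$) are determined by functoriality of the realization $\mathfrak{s}$, so that the completed diagram is commutative with the indicated signs; the entry $0$ in the second row's connecting position reflects that the middle row is split, hence has vanishing extension class. This reduces the whole proof to two invocations of projectivity (to lift $q$ and to dualize for $f$) plus one invocation of the extriangulated $3\times 3$ Lemma, and I would present it in that order, deferring the routine diagram chases for commutativity to the reader.
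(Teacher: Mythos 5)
Your overall architecture matches the paper's first step: you use projectivity of $Q$ to lift $q$ through the deflation $g$ (since $\mathbb E(Q,X)=0$) and assemble $v=[fp,\,h]\colon P\oplus Q\to Y$; the paper does the same, factoring $v=v''v'$ through $X\oplus Q$ and citing \cite[Proposition 1.20]{LiuNakaoka19} and \cite[Lemma A.8]{Chen23} to guarantee that each factor is an $\mathfrak s$-deflation and part of a morphism of $\mathbb E$-triangles, so that $(p,v,q)$ is a morphism of $\mathbb E$-triangles. Two smaller inaccuracies in your write-up: the right column is \emph{not} given data (only the deflation $q$ is; $Z'$, $s$ and $\nu$ are part of what must be constructed), and your ``second use of projectivity'' via $\mathbb E^{\geq 1}(P\oplus Q,-)$ is not meaningful, since higher extensions are not part of the structure of a bare extriangulated category and play no role here.

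The genuine gap is your load-bearing claim that the extriangulated $3\times 3$ lemma ``holds in any extriangulated category.'' The completion result actually available, \cite[Lemma 5.9]{NakaokaPalu19}, is proved under the weak idempotent completeness condition (WIC), which an arbitrary extriangulated category need not satisfy; and the ``key technical point'' you defer to (ET4), the obscure axiom and uniqueness of the total complex --- namely that the top row $X'\to Y'\to Z'$ is again an $\mathbb E$-triangle compatibly realized --- is precisely the content of that lemma, not something your sketch establishes. The paper's proof navigates this by passing to the idempotent completion $(\tilde{\C},\mathbb F,\mathfrak r)$ of $(\C,\mathbb E,\mathfrak s)$ (cf.~\cite{Dixy22}), where Lemma 5.9 applies, and then descending: one checks that the deflation $Q\to Z$ completing the grid is $q$ itself, so that $Z'$, being the cocone of the $\mathfrak s$-deflation $q$ of $\C$, may be taken in $\C$ (and then $Y'$ lies in $\C$ as an extension of $Z'$ by $X'$, since $\C$ is extension-closed in its idempotent completion). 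Without either a WIC-free proof of the completion step or this completion-and-descent argument, your proof is incomplete exactly at its main step.
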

\begin{proof}
Since $Q$ is projective in $\C$, we have the following diagram
\[
\begin{tikzcd}
P\ar[d,two heads,"p"swap]\ar[r,tail,red,"x{=}{[}1{,}0{]}^{\intercal}"]&{\color{red}P\oplus Q}\ar[d,red,two heads,"\exists {v'}"]\ar[r,"y{=}{[}0{,}1{]}",red,two heads]&Q\ar[d,equal]\ar[r,dashed,"0"]&\;\\
X\ar[d,equal]\ar[r,tail,red,"{[}1{,}0{]}^{\intercal}"]&{\color{red}X\oplus Q}\ar[d,red,two heads,"\exists {v''}"]\ar[r,"{[}0{,}1{]}",red,two heads]&Q\ar[d,two heads,"q"]\ar[r,dashed,"0"]&\;\\
X\ar[r,tail,"f"swap]&Y\ar[r,two heads,"g"swap]&Z\ar[r,dashed,"\tau"swap]&\;
\end{tikzcd}
\]
By~\cite[Proposition 1.20]{LiuNakaoka19} and~\cite[Lemma A.8]{Chen23}, there exist morphisms $v'$ and $v''$ which are part of morphisms of $\mathbb E$-triangles and which are both $\mathfrak s$-deflations.
Then $v\coloneqq v''v'$ is also an $\mathfrak s$-deflation and $(p,v,q)$ is a morphism of $\mathbb E$-triangles. 
Now we take the idempotent completion $(\tilde{\C},\mathbb F,\mathfrak r)$ of $(\C,\mathbb E,\mathfrak s)$, cf.~\cite{Dixy22}. 
Then we apply~\cite[Lemma 5.9]{NakaokaPalu19} to the diagram
\[
\begin{tikzcd}
X'\ar[d,tail,"r"swap]&{\color{red} Y'}\ar[d,red,tail,"u"] & &\\
P\ar[d,two heads,"p"swap]\ar[r,tail,red,"x{=}{[}1{,}0{]}^{\intercal}"]&{\color{red}P\oplus Q}\ar[d,red,two heads,"v"]\ar[r,"y{=}{[}0{,}1{]}",red,two heads]&Q\ar[r,dashed,"0"]&\;\\
X\ar[r,tail,"f"swap]\ar[d,dashed,"\delta"]&Y\ar[r,two heads,"g"swap]\ar[d,dashed,"\psi",red]&Z\ar[r,dashed,"\tau"swap]&\;\\
\;&\;&\;&
\end{tikzcd}
\]
so that we get the diagram (\ref{dia:horseshoe}) in $\tilde{\C}$ with the required properties. 
It is straightforward to check that the morphism $Q\rightarrow Z$ completing the above diagram is $q$. 
Since $q$ is an $\mathfrak s$-deflation, the object $Z'$ lies in $\C$ and this completes the proof.
%The object $Z'$ need not be in $\C$ a priori. 
%Note that projective objects in $\C$ remain projective in $\tilde{\C}$.
%Since $q$ is an $\mathfrak s$-deflation, we have that $Z'\oplus Q$ lies in $\C$.
%We take the direct sum of the diagram (\ref{dia:horseshoe}) and the diagram
%\[
%\begin{tikzcd}
%0\ar[r]\ar[d]&Q\ar[r,equal]\ar[d,equal]&Q\ar[d,equal]\ar[r,dashed,"0"]&\;\\
%0\ar[r]\ar[d]&Q\ar[r,equal]\ar[d]&Q\ar[r,dashed,"0"]\ar[d]&\;\\
%0\ar[d,dashed,"0"]\ar[r]&0\ar[r]\ar[d,dashed,"0"]&0\ar[d,dashed,"0"]\ar[r,dashed,"0"]&\;\\
%\;&\;&\;&
%\end{tikzcd}
%\]
%and we get the required diagram in $\C$.
\end{proof}
\begin{lemma}\label{lem:wic}
Let $(\C,\mathbb E,\mathfrak s)$ be an extriangulated category.
Let $f\in \C(A,B)$ and $g\in\C(B,C)$ be any composable pair of morphisms. 
If $g\circ f$ is an $\mathfrak s$-deflation and $f$ is an $\mathfrak s$-inflation, then $g$ is an $\mathfrak s$-deflation.
\end{lemma}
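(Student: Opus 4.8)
The plan is to reduce to the idempotent-complete setting, where the relevant ``weak idempotent completeness'' phenomena are available, and then to descend back to $\C$ using the hypothesis that $f$ is an inflation. First I would fix genuine $\mathbb{E}$-triangles witnessing the two hypotheses: since $f$ is an $\mathfrak{s}$-inflation there is an $\mathbb{E}$-triangle $A\xrightarrow{f} B\xrightarrow{p} D\dashrightarrow$, and since $gf$ is an $\mathfrak{s}$-deflation there is an $\mathbb{E}$-triangle $K\xrightarrow{k} A\xrightarrow{gf} C\dashrightarrow$.

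Next I would pull back the second triangle along $g$. By the homotopy-cartesian (weak universal) property of the resulting square, using the same tools as in Lemma~\ref{lem:horseshoe} (e.g.\ \cite[Proposition 1.20]{LiuNakaoka19}), one obtains the associated Mayer--Vietoris $\mathbb{E}$-triangle $P\to A\oplus B\xrightarrow{(gf,\,g)} C\dashrightarrow$, so that $(gf,g)$ is an $\mathfrak{s}$-deflation. Composing with the automorphism $\left(\begin{smallmatrix}1&0\\-f&1\end{smallmatrix}\right)$ of $A\oplus B$ gives $(gf,g)\circ\left(\begin{smallmatrix}1&0\\-f&1\end{smallmatrix}\right)=(0,g)$, so $(0,g)\colon A\oplus B\to C$ is an $\mathfrak{s}$-deflation as well. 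Now I would pass to the idempotent completion $(\tilde{\C},\mathbb{F},\mathfrak{r})$ \cite{Dixy22}, which is idempotent complete: there $(0,g)$ is the direct sum of $A\to 0$ and $g$, and since deflations in an idempotent-complete extriangulated category are closed under direct summands, $g$ becomes an $\mathfrak{r}$-deflation, sitting in an $\mathbb{F}$-triangle $L\xrightarrow{l} B\xrightarrow{g} C\dashrightarrow$.

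It remains to descend this $\mathbb{F}$-triangle to $\C$, and this is exactly where $f$ being an inflation enters. The morphism $f$ induces a morphism of triangles from $K\to A\xrightarrow{gf} C$ to $L\to B\xrightarrow{g} C$ which is the identity on $C$ (the left-hand map $K\to L$ exists because $g\circ(fk)=(gf)k=0$ and $l$ is a weak kernel of $g$). Since $f$ is an inflation with cokernel $D$, the $3\times 3$-lemma / octahedral axiom (ET4) of \cite{NakaokaPalu19} produces an $\mathbb{F}$-triangle $K\to L\to D\dashrightarrow$. Because $K,D\in\C$ and $\C$ is extension-closed in $\tilde{\C}$ (the bifunctor $\mathbb{F}$ restricts to $\mathbb{E}$ on objects of $\C$, so the class lies in $\mathbb{E}(D,K)$ and is realized in $\C$), the object $L$ is isomorphic to some $L'\in\C$. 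Then $L'\to B\xrightarrow{g} C$ has all terms in $\C$ and class in $\mathbb{E}(C,L')$, hence is an $\mathfrak{s}$-triangle of $\C$, proving $g$ is an $\mathfrak{s}$-deflation.

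The main obstacle is conceptual rather than computational: the step ``$(0,g)$ is a deflation $\Rightarrow$ $g$ is a deflation'' genuinely fails in a general extriangulated category, since it is precisely a weak-idempotent-completeness statement, so it cannot be carried out in $\C$ directly. The real work is therefore the descent, and the inflation hypothesis on $f$ is exactly what certifies, via the triangle $K\to L\to D$, that the kernel of $g$ already lives in $\C$; without it one would only obtain that $g$ is a deflation after idempotent completion.
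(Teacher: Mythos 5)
Your proof is correct and takes essentially the same route as the paper: pass to the idempotent completion $(\tilde{\C},\mathbb F,\mathfrak r)$ of \cite{Dixy22}, use weak idempotent completeness there to conclude that $g$ is an $\mathfrak r$-deflation, identify the kernel of $g$ as an extension of the kernel of $gf$ by the cone of $f$ (your triangle $K\to L\to D$ is exactly the paper's conflation $D\to F\to E$, which it quotes from \cite[Proposition 3.17]{NakaokaPalu19}), and descend to $\C$ by extension-closedness. The only difference is one of packaging: you re-derive by hand, via the Mayer--Vietoris conflation and the automorphism trick yielding $(0,g)$, the known fact that in a weakly idempotent complete extriangulated category a composite $gf$ being a deflation forces $g$ to be one, a step the paper simply invokes.
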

\begin{proof}
We take the idempotent completion $(\tilde{\C},\mathbb F,\mathfrak r)$ of $(\C,\mathbb E,\mathfrak s)$, cf.~\cite{Dixy22}. 
Then $g$ is an $\mathfrak r$-deflation and by~\cite[Proposition 3.17]{NakaokaPalu19} we have
\[
\begin{tikzcd}
D\ar[r]\ar[d,red,dashed]&A\ar[r,"gf",two heads]\ar[d,"f",tail]&C\ar[r,dashed,"\delta"]\ar[d,equal]&\;\\
{\color{red}F}\ar[r]\ar[d,dashed,red]&B\ar[d,two heads]\ar[r,two heads,"g"]&C\ar[r,dashed,"\mu"]&\;\\
E\ar[r,equal]\ar[d,dashed,"\nu"]&E\ar[d,dashed,"\psi"]&&\\
\;&\;&&
\end{tikzcd}
\]
The object $F$ is an extension of $D$ and $E$ and hence lies in $\C$. 
Therefore $g$ is an $\mathfrak s$-deflation.
\end{proof}

\begin{definition}[{\cite[Definition 4.13]{Jasso16}}]\label{def:clustertiltingexact}
Let $\E$ be a small exact category and $\M$ a subcategory of $\E$. We say that $\M$ is a {\em $d$-cluster tilting subcategory of} $\E$ if the following conditions are satisfied:
\begin{itemize}
\item[(i)] Every object $E\in\E$ has a left $\M$-approximation by an inflation $E\rightarrowtail M$.
\item[(ii)] Every object $E\in\E$ has a right $\M$-approximation by a deflation $M'\twoheadrightarrow E$.
\item[(iii)] We have 
\begin{align*}
\M&=\{X\in\E \mid \Ext^{i}(\M,X)=0 \text{ for $1\leq i\leq d-1$} \}\\
&=\{Y\in\E \mid \Ext^{i}(Y,\M)=0 \text{ for $1\leq i\leq d-1$} \}.
\end{align*}
\end{itemize}
\end{definition}
A morphism $f:X\rightarrow Y$ in an exact category $\E$ is {\em admissible} if $f$ admits a factorization $X\twoheadrightarrow I\rightarrowtail Y$.
A cochain complex over $\E$
\[
\ldots\rightarrow X^{n-1}\xrightarrow{d^{n-1}} X^n\xrightarrow{d^n} X^{n+1}\rightarrow \ldots
\]
is called {\em acyclic} or {\em exact} if each $d^i$ is admissible and $\ker(d^{i+1})=\im(d^i)$.
\begin{definition}[{\cite[Definition 3.1 (2)]{EbrahimiNasrIsfahani21}}]\label{def:admpre}
Let $\M$ be a full subcategory of an exact category $\E$. 
Let $\mod_{adm}(\M)$ be the full subcategory of $\Mod\M$ consisting of those functors $F$ that admit a projective presentation
\[
X^{\wedge}\xrightarrow{f^{\wedge}} Y^{\wedge}\rightarrow F\rightarrow 0
\]
for some morphism $f:X\rightarrow Y$ in $\M$ which is admissible in $\E$.
\end{definition}

\begin{definition}[{\cite[Definition 4.16]{HenrardKvammevanRoosmalen22}}]\label{def:admissiblycovariantlyfinite}
Let $\M$ be a full subcategory of an exact category $\E$. 
A morphism $f:X\rightarrow M$ in $\E$ with $M\in\M$ is called 
an {\em admissible left $\M$-approximation} if $f$ is an admissible morphism 
and any morphism $X\rightarrow M'$ with $M'\in\M$ factors through $f$. 
The subcategory $\M$ is called {\em admissibly covariantly finite} if 
for all objects $X$ in $\E$ there exists an admissible left $\M$-approximation $X\rightarrow M$.
\end{definition}

\section{Auslander--Iyama correspondence}
\begin{definition}A small extriangulated category $(\C,\mathbb E,\mathfrak s)$ 
has {\em positive global dimension $m$} \cite[Definition 3.28]{GorskyNakaokaPalu21}, 
for some integer $m\geq 0$, if $\mathbb E^m\neq 0$ and $\mathbb E^{m+1}=0$. 
If such an $m$ does not exist, we say that $(\mathcal C,\mathbb E,\mathfrak s)$ has {\em infinite positive global dimension}.
\end{definition}
\begin{definition}
Let $(\C,\mathbb E,\mathfrak s)$ be an extriangulated category with enough projectives. We define its {\em dominant dimension} $\mathrm{dom.dim}(\C,\mathbb E,\mathfrak s)$ \cite[Definition 3.5]{GorskyNakaokaPalu23} to be the largest integer $n$ such that for any projective object $P$, there exist $n$ $\mathfrak s$-triangles
\[
\begin{tikzcd}[row sep=small]
P=Y^0\ar[r,"f^0",tail]&I^0\ar[r,two heads]&Y^1\ar[r,dashed]&;\\
Y^1\ar[r,"f^1",tail]&I^1\ar[r,two heads]&Y^2\ar[r,dashed]&;\\
&\ldots&&\\
Y^{n-1}\ar[r,"f^{n-1}",tail]&I^{n-1}\ar[r,two heads]&Y^{n}\ar[r,dashed]&,
\end{tikzcd}
\]
with $I^k$ being projective-injective for $0\leq k\leq n-1$. 
If such an $n$ doese not exist, we let $\mathrm{dom.dim}(\C, \mathbb E,\mathfrak s)=\infty$.
\end{definition}
\begin{definition}[\cite{GorskyNakaokaPalu23}]
Let $d\geq 1$ be an integer. 
An extriangulated category with enough projectives is {\em $d$-Auslander} if it has positive global dimension at most $d+1$ and has dominant dimension at least $d+1$. It is {\em reduced} if moreover the only projective-injectives are $0$.
\end{definition}
\begin{remark}\label{rmk:selfdual}
The notion of $d$-Auslander extriangulated category is self-dual. The author thanks Yann Palu for pointing out this fact to him.
\end{remark}
\begin{proof}
Let $\C$ be a $d$-Auslander extriangulated category. 
We say that $Y\in \C$ admits an injective coresolution of length $i$ provided that there exists $i$ conflations
\[
\begin{tikzcd}[row sep=small]
Y\ar[r]&J^0\ar[r]&Y^1\ar[r,dashed]&\;\\
Y^1\ar[r]&J^1\ar[r]&Y^2\ar[r,dashed]&\,\;\\
&\ldots&&\\
Y^{i-1}\ar[r]&J^{i-1}\ar[r]&Y^{i}\ar[r,dashed]&\,
\end{tikzcd}
\]
where $J^{k}$ is injective in $\C$ for $0\leq k\leq i-1$. 
%If we have $\mathbb E^{\geq i+1}(-,Y)=0$, then $Y^i$ is injective.
If furthermore in this sequence of conflations $J^{k}$ is injective-projective for $k\leq l-1$, then we say that $Y$ admits a projective-injective coresolution of length $(i,l)$.

We first show that $\C$ has enough injectives.
Let $X$ be an arbitrary object in $\C$.
We have conflations in $\C$
\begin{equation}\label{conf:X}
\begin{tikzcd}[row sep=small]
X_1\ar[r]& P_0\ar[r]& X\ar[r,dashed,"\delta_0"]&\,\\
X_2\ar[r]& P_1\ar[r]& X_1\ar[r,dashed,"\delta_1"]&\,\\
&\ldots&&\\
X_{d+1}=P_{d+1}\ar[r]& P_d\ar[r]& X_d\ar[r,dashed,"\delta_d"]&\,
\end{tikzcd}
\end{equation}
where $P_i$ is projective in $\C$ for $0\leq i\leq d+1$.
We show by induction that for each $i\geq 1$ we have a conflation
\[
\begin{tikzcd}
X_i\ar[r] & I_i\ar[r] &L_i\ar[r,dashed,"\delta'"]&\,
\end{tikzcd}
\]
where $I_i$ is projective-injective and $L_i$ admits a projective-injective coresolution of length $(i,i-1)$.
This clearly holds for $i=d+1$, where $X_{d+1}=P_{d+1}$ is projective, since $\C$ is $d$-Auslander.
Suppose it holds for $i\geq 2$.
We have the following diagram 
\begin{equation}\label{dia:M_i}
\begin{tikzcd}
X_{i}\ar[r,tail]\ar[d,tail]&P_{i-1}\ar[r,two heads,"g_{i-1}"]\ar[d,tail,"v_i"]&X_{i-1}\ar[d,equal]\\
I_i\ar[r,tail]\ar[d,two heads]&M_i\ar[r,two heads]\ar[d,two heads]&X_{i-1}\\
L_{i}\ar[r,equal]&L_{i}&
\end{tikzcd}
\end{equation}
By Lemma~\ref{lem:horseshoe}, the object $M_i$ admits a projective-injective coresolution of length $(i,i-1)$.
So we have the following diagram
\[
\begin{tikzcd}
X_{i-1}\ar[r,equal]\ar[d,tail]&X_{i-1}\ar[d,tail]&\\
M_{i}\ar[r,tail]\ar[d,two heads]&I_{i}'\ar[d,two heads]\ar[r,two heads]&U_{i}\ar[d,equal]\\
I_{i}\ar[r,tail]&V_{i}\ar[r,two heads]&U_{i}
\end{tikzcd}
\]
where $U_i$ admits a projective-injective coresolution of length $(i-1,i-2)$. 
By the Horseshoe Lemma, the object $V_i$ also admits a projective-injective coresolution of length $(i-1,i-2)$.
This finishes the proof of the induction step. When $i=0$, we see that there exists an inflation $X\rightarrowtail J$ where $J$ is injective.

Now we show that $\C$ has codominant dimension at least $d+1$.
Let us assume that $X$ is injective. 
We prove by induction that we have a sequence of conflations~(\ref{conf:X}) where $P_j$ is projective-injective for $0\leq j\leq  d$.
Suppose we have a sequence of conflations~(\ref{conf:X}) where $P_j$ is projective-injective for $j\leq i-2$ for $i\geq 1$.
As above, we have a diagram (\ref{dia:M_i}). 
Then the second row of (\ref{dia:M_i}) splits and we have the following diagram
\[
\begin{tikzcd}
P_{i-1}\oplus I_{i}\ar[r,two heads,"\begin{bmatrix}g_{i-1} \  0\\u_{i} \ \ \ \id\end{bmatrix}"]\ar[d,tail]& X_{i-1}\oplus I_{i}\simeq M_i\\
I_{i-1}\oplus I_{i}\ar[ru,dashed,two heads,"\exists s"swap] &
\end{tikzcd}
\]
where $v_i=[g_{i-1}, u_i]^{\intercal}$ and $s$ is an $\mathfrak s$-deflation by Lemma~\ref{lem:wic}.
So there is an $\mathfrak s$-deflation $I_{i-1}\oplus I_i\rightarrow X_{i-1}$ and this completes the proof of the induction step.
Therefore the codominant dimension is at least $d+1$.
\end{proof}

\begin{definition}
An exact dg category $\A$ is {\em $d$-Auslander} if the extriangulated category $H^0(\A)$ is $d$-Auslander.
\end{definition}

For a connective exact dg category $\A$, we denote by $\overline{\A}$ 
the full dg subcategory of $\D^{b}_{dg}(\A)$ consisting of objects 
in the closure under kernels of retractions of $H^0(\A)$ in $\D^{b}(\A)$. 
Clearly $\overline{\A}$ is stable under extensions in $\D^b_{dg}(\A)$ 
and the inclusion $\A\rightarrow \overline{\A}$ is an exact morphism 
which induces a quasi-equivalence $\D^b_{dg}(\A)\iso \D^b_{dg}(\overline{\A})$.

Let $(\P,\I)$ be a pair where $\P$ is a connective additive dg category and $\I$ is an additive dg subcategory of $\P$.
For $n\geq 0$, we denote by $\A^{(n)}_{\P,\I}$ 
%{\color{red}(we use this notation in order to avoid confusion with the degree n component of the Hom complexes)} 
 the full dg subcategory of $\pretr(\P)$ consisting of the objects in 
\[
\P\ast\Sigma\P\ast\cdots\ast\Sigma^{n-1}\P\ast\Sigma^n\P\cap \ker\Ext^{\geq 1}(-,\I).
\]
We denote by $\Q_{\P,\I}$ (resp.~$\J_{\P,\I}$) the full dg subcategory of $\A_{\P,\I}^{(d+1)}$ whose objects are the direct summands in $H^0(\A_{\P,\I}^{(d+1)})$ of objects in $H^0(\P)\subset H^0(\A_{\P,\I}^{(d+1)})$ (resp.~$H^0(\I)\subset H^0(\A_{\P,\I}^{(d+1)})$).
Clearly the subcateogry $\Q_{\P,\I}$ is stable under kernels of retractions in 
\[
\pretr(\Q_{\P,\I})\iso \pretr(\P).
\]

\begin{definition}\label{def:equ}
\begin{enumerate}
\item[1)] Two connective exact dg categories $\A$ and $\B$ are {\em equivalent} if there is a quasi-equivalence 
%$\D^b_{dg}(\A)\iso \D^b_{dg}(\B)$ inducing a quasi-equivalence 
$\overline{\A}\iso \overline{\B}$.
\item[2)] Let $(\P,\I)$ be a pair consisting of 
 a connective additive dg category $\P$ and
 an additive dg subcategory $\I\subset \P$. It is called {\em $d$-cluster tilting} if $H^0(\I)$ is covariantly finite in $H^0(\A^{(d)}_{\P,\I})$.
\item[3)] Two $d$-cluster tilting pairs $(\P,\I)$ and $(\P',\I')$ are {\em equivalent} if there is a quasi-equivalence 
%\[
%\pretr(\P)\iso \pretr(\P')
%\]
% inducing quasi-equivalences 
 $\Q_{\P,\I}\iso \Q_{\P',\I'}$ which restricts to a quasi-equivalence $ \J_{\P,\I}\iso \J_{\P',\I'}$.
 \end{enumerate}
\end{definition}

\begin{theorem}[Auslander--Iyama correspondence for exact dg categories]\label{thm:d-Auslander correspondence}
There is a bijective correspondence between the following:
\begin{itemize}
\item[(1)] equivalence classes of connective exact dg categories $\A$ which are $d$-Auslander;
\item[(2)] equivalence classes of $d$-cluster tilting pairs.
\end{itemize}
The bijection from $(1)$ to $(2)$ sends $\A$ to the pair $(\P,\I)$ formed by  the full dg subcategory $\P$ on the projectives of $\A$ and its full dg subcategory $\I$ of projective-injectives. The inverse bijection sends $(\P,\I)$ to the $\tau_{\leq 0}$-truncation of the  dg subcategory $\A^{(d+1)}_{\P,\I}$
 of $\pretr(\P)$.
\end{theorem}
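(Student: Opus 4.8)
The plan is to realize both assignments as two aspects of one identification carried out inside the pretriangulated hull $\pretr(\P)$. For a connective exact dg category $\A$ with enough projectives and finite positive global dimension, the subcategory $\P$ of projectives generates $\D^b_{dg}(\A)$, so that $\D^b_{dg}(\A)\simeq\pretr(\P)$; the core of the argument is to describe the image of $\overline\A$ under this equivalence as $\tau_{\leq 0}\A^{(d+1)}_{\P,\I}$. Once this description is in place, well-definedness and mutual inverseness of the two maps follow by reading it in either direction.

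For the map from $(2)$ to $(1)$, let $(\P,\I)$ be a $d$-cluster tilting pair. The subcategory $\A^{(d+1)}_{\P,\I}$ is extension-closed in $\pretr(\P)$, being the intersection of the extension-closed subcategories $\P\ast\Sigma\P\ast\cdots\ast\Sigma^{d+1}\P$ and $\ker\Ext^{\geq 1}(-,\I)$; by \cite{Chen23} its connective truncation $\tau_{\leq 0}\A^{(d+1)}_{\P,\I}$ is therefore a connective exact dg category, whose projectives and projective-injectives I would identify with $\Q_{\P,\I}$ and $\J_{\P,\I}$. The bound $\mathbb E^{\geq d+2}=0$ is immediate, since each object, lying in $\P\ast\cdots\ast\Sigma^{d+1}\P$, has a projective resolution of length at most $d+1$. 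The substantial point is dominant dimension at least $d+1$: using covariant finiteness of $H^0(\I)$ in $H^0(\A^{(d)}_{\P,\I})$ together with the orthogonality $\ker\Ext^{\geq 1}(-,\I)$, I would construct for each projective an injective coresolution whose first $d+1$ terms are projective-injective, repeatedly applying Lemma~\ref{lem:horseshoe} and Lemma~\ref{lem:wic} exactly as in the proof of Remark~\ref{rmk:selfdual}.

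For the map from $(1)$ to $(2)$, let $\A$ be a connective $d$-Auslander exact dg category with projectives $\P$ and projective-injectives $\I$. Finite positive global dimension gives each object of $H^0(\A)$ a projective resolution of length at most $d+1$, placing it in $\P\ast\cdots\ast\Sigma^{d+1}\P$, while injectivity of the objects of $\I$ forces $\Ext^{\geq 1}(\overline\A,\I)=0$; together these identify the objects of $\overline\A$ with those of $\A^{(d+1)}_{\P,\I}$ under $\D^b_{dg}(\A)\simeq\pretr(\P)$. To see that $(\P,\I)$ is $d$-cluster tilting I would verify covariant finiteness of $H^0(\I)$ in $H^0(\A^{(d)}_{\P,\I})$, which follows from the existence of injective coresolutions in $H^0(\A)$ established in Remark~\ref{rmk:selfdual} (i.e. that $\A$ has enough injectives), restricted along the embedding.

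Finally, mutual inverseness reduces to the key identification. The round trip $(2)\to(1)\to(2)$ recovers the projectives and projective-injectives of $\tau_{\leq 0}\A^{(d+1)}_{\P,\I}$ as $(\Q_{\P,\I},\J_{\P,\I})$, which is $(\P,\I)$ up to the equivalence of Definition~\ref{def:equ}; the round trip $(1)\to(2)\to(1)$ returns $\tau_{\leq 0}\A^{(d+1)}_{\P,\I}\simeq\overline\A$ by the same description. I expect the main obstacle to be the dominant-dimension bound in $(2)\to(1)$, together with its mirror image, the covariant finiteness in $(1)\to(2)$: both require transporting the injective-coresolution machinery of Remark~\ref{rmk:selfdual} into the extension-closed subcategory $\A^{(d+1)}_{\P,\I}$ of $\pretr(\P)$ and controlling the interaction of the truncation $\tau_{\leq 0}$ with the exact structure and with idempotent completion.
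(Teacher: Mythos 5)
Your proposal is correct and follows essentially the same route as the paper: the identification $\D^b_{dg}(\A)\simeq\pretr(\P)$ carrying $\overline{\A}$ onto $\tau_{\leq 0}\A^{(d+1)}_{\P,\I}$ is exactly the content of Propositions~\ref{prop:d-Auslander1} and~\ref{prop:d-Auslander}, and your plan for the dominant-dimension bound and the round trips via the Horseshoe Lemma, Lemma~\ref{lem:wic}, Remark~\ref{rmk:selfdual} and the pair $(\Q_{\P,\I},\J_{\P,\I})$ matches the paper's proof, including Proposition~\ref{prop: equivalence d-Auslander} for well-definedness on equivalence classes. One small imprecision: covariant finiteness of $H^0(\I)$ in $H^0(\A^{(d)}_{\P,\I})$ does not follow from $\A$ merely having enough injectives, but from the $i\geq 1$ part of the induction in Remark~\ref{rmk:selfdual} (inflations of syzygy objects into \emph{projective-injectives} with cokernel in $\A^{(d+1)}_{\P,\I}$, as in Lemma~\ref{lem:covariantlyfinite}); since you invoke precisely that remark's machinery, this is the same argument.
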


Clearly, under the correspondence of Theorem~\ref{thm:d-Auslander correspondence},
\begin{itemize}
\item[a)] the exact dg categories in $(1)$ whose corresponding extriangulated category is moreover reduced correspond to the pairs in $(2)$ where $\I=0$; 
\item[b)] the exact dg categories in $(1)$ with the trivial exact structure (so that each object is both projective and injective) correspond to the pairs in $(2)$ where $\I=\P$.
\end{itemize}
\begin{remark}
Recall that an extriangulated category is {\em algebraic} if it is equivalent as an extriangulated category to $H^0(\A)$ for an exact dg category $\A$, 
cf.~\cite[Proposition-Definition 6.20]{Chen23}. 
Since $H^0(\tau_{\leq 0}\A)$ and $H^0(\A)$ are equal 
as extriangulated categories, we may assume in the above definition that $\A$ is connective.
Therefore by Theorem~\ref{thm:d-Auslander correspondence}, each algebraic $d$-Auslander extriangulated category is of the form $H^0(\A_{\P,\I}^{(d+1)})$ for some $d$-cluster tilting pair $(\P,\I)$.
\end{remark}
\begin{proposition}\label{prop: equivalence d-Auslander}
A connective exact dg category $\A$ is $d$-Auslander if and only if $\overline{\A}$ is $d$-Auslander.
\end{proposition}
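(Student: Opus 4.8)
The plan is to reduce everything to a comparison of the extriangulated categories $H^0(\A)$ and $H^0(\overline{\A})$. Since $\A$ (resp.\ $\overline{\A}$) is $d$-Auslander exactly when $H^0(\A)$ (resp.\ $H^0(\overline{\A})$) is, it suffices to treat these two extriangulated categories. Writing $\C=H^0(\A)$, I would first identify $H^0(\overline{\A})$ with the closure of $\C$ under kernels of retractions inside $\D^b(\A)$, that is, with the weak idempotent completion $\C^{\mathrm{wic}}$ of $\C$; both categories sit as extension-closed subcategories of the common triangulated category $\D^b(\A)\iso\D^b(\overline{\A})$, and the inclusion $\C\hookrightarrow\C^{\mathrm{wic}}$ is a fully faithful exact functor. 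Consequently the higher extension bifunctors are compatible, $\mathbb{E}^i_{\C^{\mathrm{wic}}}(Z,X)=\mathbb{E}^i_{\C}(Z,X)$ for $X,Z\in\C$ (by \cite{Dixy22}), while every object of $\C^{\mathrm{wic}}$ is a direct summand of an object of $\C$, so by biadditivity each group $\mathbb{E}^i_{\C^{\mathrm{wic}}}(Z',X')$ is a direct summand of a group $\mathbb{E}^i_{\C}(Z,X)$.

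Next I would set up the dictionary between $\C$ and $\C^{\mathrm{wic}}$. Using the summand description together with the biadditivity of the $\mathbb{E}^i$, an object is projective (resp.\ injective, resp.\ projective-injective) in $\C^{\mathrm{wic}}$ precisely when it is a direct summand of such an object of $\C$; in particular a projective of $\C$ remains projective in $\C^{\mathrm{wic}}$ and, restricting, a projective of $\C^{\mathrm{wic}}$ that lies in $\C$ is projective there. I would then check that $\C$ has enough projectives if and only if $\C^{\mathrm{wic}}$ does: given $X'\in\C^{\mathrm{wic}}$, a summand of $X\in\C$, one composes a deflation $P\twoheadrightarrow X$ from a projective $P\in\C$ with the split projection $X\twoheadrightarrow X'$; the converse is analogous. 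From the summand compatibility of the $\mathbb{E}^i$ it follows immediately that $\mathbb{E}^{m}_{\C}=0$ if and only if $\mathbb{E}^{m}_{\C^{\mathrm{wic}}}=0$, so the two categories have equal positive global dimension, and the condition ``positive global dimension $\leq d+1$'' transfers in both directions.

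It then remains to transfer the dominant dimension, i.e.\ to prove $\mathrm{dom.dim}(\C)\geq d+1$ if and only if $\mathrm{dom.dim}(\C^{\mathrm{wic}})\geq d+1$. The inclusion $\proj\C\subset\proj\C^{\mathrm{wic}}$ and the fact that projective-injectives of $\C$ stay projective-injective in $\C^{\mathrm{wic}}$ dispose of part of the argument, and I would use the Horseshoe Lemma~\ref{lem:horseshoe} and Lemma~\ref{lem:wic} to build and to split projective-injective coresolutions, exactly the tools already developed for the self-duality of Remark~\ref{rmk:selfdual}. The crux, and the step I expect to be the main obstacle, is the forward direction: a projective $P'$ of $\C^{\mathrm{wic}}$ is only a summand $P\cong P'\oplus P''$ of a projective $P$ of $\C$, and a projective-injective coresolution of $P$ does not obviously restrict to one of $P'$, since in a merely weakly idempotent complete category the relevant idempotents need not split along the coresolution and minimal injective coresolutions need not exist over a general base ring $k$.

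I would resolve this by establishing that the $d$-Auslander condition, and more precisely the length of a projective-injective coresolution, is stable under passage to direct summands. Concretely I would show that from a projective-injective coresolution of $P'\oplus P''$ one can extract one of $P'$ whose first $d+1$ terms remain projective-injective, using that projective-injectives are closed under summands together with the Horseshoe Lemma~\ref{lem:horseshoe} to assemble coresolutions and Lemma~\ref{lem:wic} to control deflations when comparing the two coresolutions. If the direct splitting proves delicate, the fallback is to invoke the self-duality of the $d$-Auslander property (Remark~\ref{rmk:selfdual}), which lets me treat the dominant and codominant formulations symmetrically and thereby handle summands on whichever side is more tractable. Combining the equality of positive global dimensions from the second paragraph with this transfer of dominant dimension then yields that $\C$ is $d$-Auslander if and only if $\C^{\mathrm{wic}}$ is, which is the assertion.
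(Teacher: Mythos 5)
Your framework is the same as the paper's (reduce to $\C=H^0(\A)$ versus its closure under kernels of retractions, transfer positive global dimension through the common derived category or, equivalently, by biadditivity of the $\mathbb{E}^i$ on summands, and get enough projectives by composing a deflation $P\twoheadrightarrow X$ with the split projection), and you correctly isolate the hard point: transferring dominant dimension to a projective $P'$ of $H^0(\overline{\A})$ that is only a summand of a projective $P\cong P'\oplus P''$ of $\C$. But at exactly that point your proposal has a genuine gap. The claim that ``from a projective-injective coresolution of $P'\oplus P''$ one can extract one of $P'$'' is the statement to be proved, and the tools you name do not produce it: there is no term-by-term splitting of a coresolution of the sum (as you yourself observe, minimal coresolutions are unavailable), Lemma~\ref{lem:wic} plays no role here, and the Horseshoe Lemma~\ref{lem:horseshoe} assembles a coresolution of an \emph{extension} from coresolutions of its two ends — so to use it you need a projective-injective coresolution of the complement $P''$. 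All you know about $P''$ is that it is projective in $\C^{\mathrm{wic}}$, i.e.\ it belongs precisely to the class whose coresolutions you are trying to construct: the argument as sketched is circular. The fallback via self-duality (Remark~\ref{rmk:selfdual}) does not repair this, since the summand difficulty is invariant under dualization.

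The paper supplies the two missing ingredients. First, the octahedral diagram~(\ref{dia:octa}) shows that the complement can be arranged to lie in $\C$ itself: for $X\in\overline{\A}$ with $Y\iso X\oplus Z$ ($Y,Z\in\C$) and a conflation $M\rightarrow P\rightarrow Y$ with $P$ projective, the kernel $N$ of $P\rightarrow X$ is an extension of $Z$ by $M$, hence lies in $\C$; when $X$ is projective the column $N\rightarrow P\rightarrow X$ splits, so $P\iso X\oplus N$ with $N\in\P$, and the $d$-Auslander hypothesis on $\C$ now applies to the complement. Second, rather than splitting a coresolution, the paper pushes out one step of it: taking $Y\rightarrowtail I\twoheadrightarrow U$ with $I$ projective-injective and $U$ admitting a projective-injective coresolution of length $d$, the composite inflation $X\rightarrowtail Y\rightarrowtail I$ has cone $V$ fitting into a conflation $Z\rightarrowtail V\twoheadrightarrow U$; Lemma~\ref{lem:horseshoe} applied to this conflation (using the coresolution of $Z\in\P$) gives $V$ a projective-injective coresolution of length $d$, so $X\rightarrowtail I\twoheadrightarrow V$ begins one of length $d+1$. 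Alternatively, your circularity could be broken without locating the complement in $\C$ by inducting on the coresolution length $n\leq d+1$ (the Horseshoe step then only needs a length-$(n-1)$ coresolution of $P''$, supplied by the inductive hypothesis), but some such device must be made explicit; as written, the crux step of your proof does not go through.
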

\begin{proof}Let $\P$ (resp.~$\I$) be the full dg subcategory on the projectives (resp.~projective-injectives) of $\A$.
 Let $X\in \overline{\A}$. Then we have $Y$ and $Z$ in $\A$ such that $Y\iso X\oplus Z$. We have a conflation in $\A$
\[
M\rightarrow P\rightarrow Y
\]
where $P$ is projective in $H^0(\A)$. Then we have the following diagram in $\D^b(\A)$
\begin{equation}\label{dia:octa}
\begin{tikzcd}
M\ar[r]\ar[d,equal]&N\ar[r]\ar[d]&Z\ar[d]\ar[r]&\Sigma M\ar[d,equal]\\
M\ar[r]&P\ar[r]\ar[d]&Y\ar[d]\ar[r]&\Sigma M\\
&X\ar[r,equal]&X&
\end{tikzcd}
\end{equation}
Then it is clear that $N$ lies in $\A$ and hence $\overline{\A}$ has enough projectives.

Since the dg derived category of $\A$ is quasi-equivalent to the dg derived category $\overline{\A}$ and the higher extensions of $H^0(\A)$ (resp.~$H^0(\overline{\A})$) can be computed as the suspended groups in their dg derived categories, cf.~\cite[Proposition 6.23]{Chen23}, it is immediate that the posive global dimension of $\overline{\A}$ is at most $d+1$.  

 Suppose $X\in \overline{\A}$ is projective. 
 Since $\A$ has enough projectives, we have $Y$ and $Z$ in $\P$ such that $Y\iso X\oplus Z$: in diagram (\ref{dia:octa}), the middle column splits and we have that $N\in \P$ and $P\iso X\oplus N$.
 We have a conflation in $\A$
 \[
 Y\rightarrow I\rightarrow U
 \]
 where $I\in\I$ and $U$ admits a projective-injective coresolution of length $d$.
 So we have the following diagram in $\D^b(\A)$
 \[
 \begin{tikzcd}
 X\ar[r,equal]\ar[d]&X\ar[d]&&\\
 Y\ar[r]\ar[d]&I\ar[r]\ar[d]&U\ar[d,equal]\ar[r]&\Sigma Y\ar[d]\\
 Z\ar[r]&V\ar[r]&U\ar[r]&\Sigma Z
 \end{tikzcd}
 \]
 Then by the Horseshoe Lemma, the object $V$ admits a projective-injective coresolution of length $d$  and the second column shows that $X$ admits a projective-injective coresolution of length $d+1$.
 
\end{proof}

\begin{proposition}\label{prop:weakly idempotent complete}
For a $d$-cluster tilting pair $(\P,\I)$, the category $H^0(\A_{\P,\I}^{(d+1)})$ is stable under kernels of retractions in $\tr(\P)$.
\end{proposition}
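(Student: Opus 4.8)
The plan is to identify $H^0(\A^{(d+1)}_{\P,\I})$ inside $\tr(\P)$ with the objects cut out by homological conditions that are manifestly stable under direct summands, and to verify that identification by induction on the length of a projective resolution. Write $\mathcal{C}_n\coloneqq H^0(\A^{(n)}_{\P,\I})$, so that an object $X$ of $\tr(\P)$ lies in $\mathcal{C}_n$ exactly when $X\in\P\ast\Sigma\P\ast\cdots\ast\Sigma^n\P$ and $\Ext^{\geq 1}(X,\I)=0$. Suppose $r\colon X\to Y$ is a retraction in $\tr(\P)$ with $X\in\mathcal{C}_{d+1}$, and let $K$ be its kernel, so that $X\cong Y\oplus K$ in $\tr(\P)$; the goal is to place $Y$ and $K$ in $\mathcal{C}_{d+1}$. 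The vanishing condition is free of charge: for every $i\geq 1$ and $I\in\I$ the group $\Ext^i(Y,I)$ is a direct summand of $\Ext^i(X,I)=0$, so $\Ext^{\geq 1}(Y,\I)=0$, and likewise for $K$. Hence everything reduces to showing that $Y$ and $K$ lie in $\P\ast\Sigma\P\ast\cdots\ast\Sigma^{d+1}\P$.

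First I would reformulate membership in $\P\ast\Sigma\P\ast\cdots\ast\Sigma^{n}\P$ homologically: such an object is precisely a connective object of $\tr(\P)$ admitting a length-$n$ projective resolution by objects of $\P$, equivalently one whose $n$-th iterated syzygy is projective. Both conditions---connectivity and ``$n$-th syzygy projective''---pass to direct summands in $\tr(\P)$, since the cohomology of a summand is a summand of the cohomology and an iterated syzygy of a summand is a summand of the corresponding syzygy of the whole. Granting the reformulation, $Y$ is connective (being a summand of the connective object $X$) and its $(d+1)$-st syzygy is a summand of the projective $(d+1)$-st syzygy of $X$, hence projective; together with the vanishing above this gives $Y\in\mathcal{C}_{d+1}$.

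It remains to prove the reformulation, which I would do by induction on $n$, peeling off one projective at a time. The key \emph{syzygy step} is that if $Z\in\mathcal{C}_n$ with $n\geq 1$ and $W\to P\to Z\to\Sigma W$ is a triangle with $P\in\P$, then $W\in\mathcal{C}_{n-1}$: the $\ast$-filtration of $Z$ forces $W\in\P\ast\cdots\ast\Sigma^{n-1}\P$, and applying $\Ext^\bullet(-,I)$ to the triangle, using $\Ext^{\geq 1}(P,I)=0$ and $\Ext^{\geq 1}(Z,I)=0$, yields $\Ext^{\geq 1}(W,I)=0$. Because $\P$ has enough projectives for its connective objects, one may choose such deflations; for the summand $Y$ one resolves $Y$ and $K$ separately and takes direct sums (or invokes the Horseshoe Lemma~\ref{lem:horseshoe} applied to the split conflation $K\rightarrowtail X\twoheadrightarrow Y$) to obtain a common $\P$-deflation of $X$ whose syzygy splits as $W_Y\oplus W_K$; the syzygy step then puts $W_Y$ into $\mathcal{C}_d$ as a summand, the induction hypothesis applies, and the triangle $W_Y\to P_Y\to Y\to\Sigma W_Y$ returns $Y$ to $\P\ast\Sigma\mathcal{C}_d\subseteq\P\ast\Sigma\P\ast\cdots\ast\Sigma^{d+1}\P$.

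The delicate point---and the only place the hypotheses really enter---is the control of projectivity at the two endpoints of the resolution: one must know that the projective deflations can be taken with their sources genuinely in $\P$, and that the topmost syzygy of the summand $Y$, a priori only a summand in $\tr(\P)$ of an object of $\P$, is again an object of $\P$ rather than merely of its summand-closure. This is exactly where idempotent-completeness considerations are unavoidable; they are handled by the standing convention that additive subcategories are closed under direct summands (so the projective terms never leave $\P$) together with the fact that we only split retractions that already exist in $\tr(\P)$. I expect this endpoint bookkeeping, rather than the inductive skeleton, to be the main obstacle, and the covariant finiteness of $H^0(\I)$ in $H^0(\A^{(d)}_{\P,\I})$ guaranteed by the $d$-cluster tilting hypothesis to be what secures the existence of the approximations needed to close the induction at the top degree $n=d+1$.
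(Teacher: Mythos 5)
Your inductive skeleton (summand-stability of the $\Ext^{\geq 1}(-,\I)$-condition plus a syzygy-peeling induction) parallels the first half of the paper's argument, but there is a genuine gap at exactly the point you flag as delicate, and neither of your proposed fixes works. First, a retract in $\tr(\P)$ of an object of $\P$ need \emph{not} lie in $\P$: the standing convention that additive subcategories are closed under summands concerns subcategories of an ambient category and cannot force $H^0(\P)$, let alone $\tr(\P)$, to split idempotents; and ``we only split retractions that already exist in $\tr(\P)$'' is precisely the problem, since $\tr(\P)$ contains retractions invisible to $H^0(\P)$. Concretely, take $\P$ to be the finitely generated free modules over a ring $A$ admitting a stably free non-free projective $Q$ with $Q\oplus A\cong A^{n+1}$: then $Q$, realized as the two-term complex $(A\to A^{n+1})$, is a retract of $A^{n+1}$ in $\tr(\P)\simeq K^b(\P)$ but is not isomorphic to any object of $\P$. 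So your topmost syzygy is in general only a $\tr(\P)$-summand of an object of $\P$, the base case of your reformulation (``$n$-th syzygy projective'' implies membership in $\P\ast\Sigma\P\ast\cdots\ast\Sigma^{n}\P$) fails if ``projective'' means ``summand of $\P$'', and the induction does not close. Second, your diagnosis that the covariant finiteness of $H^0(\I)$ secures the endpoint is a misattribution: the paper's proof of this proposition never uses the $d$-cluster-tilting hypothesis at all --- it uses only the connectivity of $\P$ and the summand-stability of the $\Ext$-condition --- and $\I$-approximations cannot manufacture the missing splitting of idempotents.

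What actually closes the argument, and is the step your proof is missing, is an \emph{absorption} of the stray projective one suspension higher rather than a splitting at the top of the resolution. Writing $\U^{(i+1)}$ for the full subcategory of $\tr(\P)$ on $\P\ast\Sigma\P\ast\cdots\ast\Sigma^{i}\P$, the paper first shows via an octahedron that the cone of any morphism between objects of $\U^{(i+1)}$ lies in $\U^{(i+2)}$; then, for the kernel $Z$ of a retraction \emph{both} of whose endpoints lie in $\U^{(d+1)}$, two such octahedra give $\Sigma Z\in\U^{(d+2)}=\P\ast\Sigma\,\U^{(d+1)}$, i.e.\ a triangle $\Sigma^{-1}Q\to Z\to W\to Q$ with $Q\in\P$ and $W\in\U^{(d+1)}$. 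Since $Z$ is a summand of an object of $\U^{(d+1)}$ and $\P$ is connective, $\Hom_{\tr(\P)}(\Sigma^{-1}\P,Z)=0$, so this triangle splits and $Z\in\U^{(d+1)}\ast\Sigma\P\subseteq\U^{(d+1)}$, the last inclusion holding because connectivity makes every extension in $\Sigma^{a}\P\ast\Sigma\P$ with $a\geq 1$ split and reorder --- this is exactly the module-theoretic trick of adding a trivial complex $(A\xrightarrow{\,=\,}A)$ to trade a stably free top syzygy for a free one. Note finally that the proposition only asserts (and the paper only proves) stability for retractions whose target already lies in $H^0(\A^{(d+1)}_{\P,\I})$; membership of the retract is used to bound $Z$ in $\Sigma^{-1}\U^{(d+2)}$. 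You instead set out to prove full summand-closure in $\tr(\P)$, which is stronger than required and is precisely where your endpoint problem bites hardest.
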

\begin{proof}
For $i\geq 1$, let $\U^{(i+1)}$ be the full subcategory of $\tr(\P)$ on the objects in
\[
\P\ast\Sigma\P\ast\cdots\ast\Sigma^{i}\P.
\]
Consider a triangle in $\tr(\P)$
\begin{equation}\label{tri:Ad+1}
\begin{tikzcd}
X\ar[r]&Y\ar[r]&Z\ar[r]&\Sigma X.
\end{tikzcd}
\end{equation}
where $X$ and $Y$ are in $\U^{(i+1)}$.  
Then we have $Z\in \U^{(i+2)}$. 
Indeed, in this case we have the following diagram
\[
\begin{tikzcd}
Y'\ar[r,equal]\ar[d]&Y'\ar[d]&\\
K\ar[r]\ar[d]&P\ar[r]\ar[d,""]&Z\ar[d,equal]\\
X\ar[r]&Y\ar[r]&Z
\end{tikzcd}
\]
where $P\in \P$ and $Y'\in \U^{(i)}\subset \U^{(i+1)}$. 
Since $K$ is an extension of $Y'$ and $X$, it lies in $\U^{(i+1)}$. 
Hence $Z$ lies in $\U^{(i+2)}$.
Consider a split triangle
\[
\begin{tikzcd}
X\ar[r]&Y\ar[r]&Z\ar[r]&\Sigma X
\end{tikzcd}
\]
where $X$ and $Y$ are in $\U^{(d+1)}$.
We have the following diagram
\[
\begin{tikzcd}
Y'\ar[r,equal]\ar[d]&Y'\ar[d]&\\
L\ar[r]\ar[d]&P\ar[r]\ar[d]&X\ar[d,equal]\\
Z\ar[r]&Y\ar[r]&X
\end{tikzcd}
\]
Then we have $L\in \Sigma^{-1}\U^{(d+1)}$.
From the triangle in the first column,  we have that $Z\in \Sigma^{-1}\U^{(d+2)}$.
We have a triangle in $\tr(\P)$
\[
\Sigma^{-1}Q\rightarrow Z\rightarrow W\rightarrow Q
\]
where $Q\in\P$ and $W\in \U^{(d+1)}$.
Since $\Hom(\Sigma^{-1}\P,Z)=0$, we have $Z\in \U^{(d+1)}\ast \Sigma\P\subset \U^{(d+1)}$.

Since $\A^{(d+1)}_{\P,\I}=\U^{(d+1)}\cap \ker\Ext^{\geq 1}(-,\I)$, we have that $H^0(\A^{(d+1)}_{\P,\I})$ is stable under kernels of retractions in $\tr(\P)$.
\end{proof}
%\begin{proof}
%Put $\A=\A_{\P,\I}^{(d+1)}$. Suppose we have $Y\iso X\oplus Z$ in $\tr(\P)$ where both $Y$ and $Z$ are in $\A$.
%We have the diagram~\ref{dia:octa} in $\tr(\P)$ where both $M$ and $P$ are in $\P$. It is enough to show that $N\in\P$.
%We have the triangle~\ref{tri:Z} and the diagram~\ref{dia:octa3} in $\tr(\P)$.
%We have $M\oplus P\iso N\oplus \Sigma^{-1}L$. Since $N$ is an extension of $M$ and $Z$, it lies in $\A$ and in particular it lies in $\pr(\P)$.
%Since $\P$ is weakly idempotent complete, we have that $N$ lies in $\P$.
%\end{proof}

Let $(\P,\I)$ be a pair where $\P$ is a connective additive dg category and $\I$ is an additive dg subcategory of $\P$.
Let $\B^{(n)}_{\P,\I}$ be the full dg subcategory of $\pretr(\P)$ consisting of objects in 
\[
 \I\ast\Sigma\I\ast\cdots\ast\Sigma^{n-1}\I\ast\Sigma^n\P\cap \ker\Ext^{\geq 1}(-,\I).
\]
Let $\C^{(n)}_{\P,\I}$ be the full dg subcategory of $\pretr(\P)$ consisting of the objects in the union of the $\B^{(i)}_{\P,\I}$ for $0\leq i\leq n$.

\begin{lemma}\label{lem:covariantlyfinite}
The pair $(\P,\I)$ is $d$-cluster tilting if and only if $H^0(\I)$ is covariantly finite in $H^0(\C^{(d)}_{\P,\I})$.
\end{lemma}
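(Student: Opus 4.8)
The plan is to prove the two implications separately; the forward one is formal, and the reverse one rests on an inductive construction together with a single factorization lemma.

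For the forward implication I would first record the inclusions $\I\subseteq\C^{(d)}_{\P,\I}\subseteq\A^{(d)}_{\P,\I}$: indeed $\I\subseteq\P=\B^{(0)}_{\P,\I}$, and for $i\le d$ one has $\B^{(i)}_{\P,\I}\subseteq\A^{(i)}_{\P,\I}\subseteq\A^{(d)}_{\P,\I}$ (using $\I\subseteq\P$ and padding a filtration by zero objects). Consequently a left $\I$-approximation of an object of $H^0(\C^{(d)}_{\P,\I})$ computed in the larger category $H^0(\A^{(d)}_{\P,\I})$ has target in $H^0(\I)\subseteq H^0(\C^{(d)}_{\P,\I})$ and stays universal there. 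Hence if $(\P,\I)$ is $d$-cluster tilting, then $H^0(\I)$ is covariantly finite in $H^0(\C^{(d)}_{\P,\I})$.

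The key tool for the converse is the following \emph{factorization lemma}: if $X\xrightarrow{\iota}C\to Z\to\Sigma X$ is a triangle in $\tr(\P)$ with $C$ admitting a left $\I$-approximation and with $\Ext^1(Z,\I)=0$, then $X$ admits a left $\I$-approximation. The proof is a one-line diagram chase: for $f\colon X\to I'$ with $I'\in H^0(\I)$ the obstruction to factoring $f$ through $\iota$ is the composite $\Sigma^{-1}Z\to X\xrightarrow{f}I'$, which lies in $\Hom(\Sigma^{-1}Z,I')=\Ext^1(Z,I')=0$; writing $f=\bar f\iota$ and postcomposing $\bar f$ with a left $\I$-approximation of $C$ exhibits $\iota$ followed by that approximation as a left $\I$-approximation of $X$.

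Granting that $H^0(\I)$ is covariantly finite in $H^0(\C^{(d)}_{\P,\I})$, I would prove by induction on $d$ that every $X\in H^0(\A^{(d)}_{\P,\I})$ fits into a triangle $X\to C\to Z\to\Sigma X$ with $C\in H^0(\C^{(d)}_{\P,\I})$ and $Z\in\ker\Ext^{\geq 1}(-,\I)$; by the factorization lemma (the hypothesis gives $C$ a left $\I$-approximation) this yields covariant finiteness in $H^0(\A^{(d)}_{\P,\I})$. For $d=0$ one has $\A^{(0)}_{\P,\I}=\P=\C^{(0)}_{\P,\I}$ and may take $C=X$, $Z=0$. For the inductive step, peel the bottom layer of the defining filtration to write $X$ as the cone of a morphism $Y\to P_0$ with $P_0\in\P$ and $Y\in\A^{(d-1)}_{\P,\I}$ (the long exact sequence of $\Ext(-,\I)$ shows $Y\in\ker\Ext^{\geq 1}(-,\I)$). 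Restriction of covariant finiteness to $\C^{(d-1)}_{\P,\I}$ and the inductive hypothesis give a triangle $Y\to C_Y\to Z_Y$ with $C_Y\in\C^{(d-1)}_{\P,\I}$ and $\Ext^{\geq 1}(Z_Y,\I)=0$, while a left $\I$-approximation $P_0\to I_0$ has cone in $\B^{(1)}_{\P,\I}$, again in $\ker\Ext^{\geq 1}(-,\I)$. Since maps $Y\to\I$ factor through $Y\to C_Y$, the composite $Y\to P_0\to I_0$ factors as $Y\to C_Y\to I_0$, producing a morphism of triangles from $[Y\to P_0\to X]$ to $[C_Y\to I_0\to C]$ with $C=\Cone(C_Y\to I_0)\in\I\ast\Sigma\C^{(d-1)}_{\P,\I}$; feeding the two resolution conflations and the conflation defining $X$ into the Horseshoe Lemma~\ref{lem:horseshoe} should assemble $C$ with a filtration of the required $\B^{(i)}$-shape, so that $C\in\C^{(d)}_{\P,\I}$, while identifying $Z=\Cone(X\to C)$ as an extension of $\B^{(1)}$- and cosyzygy-terms lying in $\ker\Ext^{\geq 1}(-,\I)$.

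The step I expect to be the main obstacle is exactly this last assembly. A naive octahedron on $Y\to P_0\to I_0$ only gives $X\to W\to Z_0\to\Sigma X$ with $Z_0\in\B^{(1)}_{\P,\I}$ but with $W\in\I\ast\Sigma\A^{(d-1)}_{\P,\I}$ merely \emph{partially} injective; and rotating the triangles relating $W$ to $\C^{(d)}_{\P,\I}$ systematically produces cones of the form $\Sigma Z_Y$, whose $\Ext^1(-,\I)=\Hom(Z_Y,\I)$ need not vanish, so the factorization lemma cannot be applied to $W$ on the nose. What is really needed is an honest injective coresolution of $X$ whose first $d$ terms are projective-injective, obtained as the mapping cone of coresolutions of $Y$ and $P_0$; the Horseshoe Lemma is precisely what makes this cone-of-coresolutions a coresolution inside the extriangulated category and forces the vanishing of the higher extensions into $\I$. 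The delicate bookkeeping is to verify that truncating this coresolution at the $d$-th step lands the object in the $\B^{(i)}$-stratification of $\C^{(d)}_{\P,\I}$ rather than in an uncontrolled extension.
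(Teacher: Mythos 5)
Your forward implication is fine (it is the trivial half, which the paper leaves implicit), and your factorization lemma is correct --- it is exactly the closing line of the paper's proof. The genuine gap is the inductive step of the converse, which you flag yourself but do not close. Two things go wrong with the proposed assembly $C=\Cone(C_Y\to I_0)$. First, as you observe, the $3\times 3$ diagram gives a triangle $Z_Y\to Z_0\to Z\to \Sigma Z_Y$, and the map $\Hom(Z_Y,I)\to\Ext^1(Z,I)$ induced by $Z\to\Sigma Z_Y$ has no reason to vanish, so your factorization lemma does not apply to $X\to C$. Second --- a point you do not flag --- the object $C$ need not lie in $\C^{(d)}_{\P,\I}$ at all: membership in the strata $\B^{(i)}_{\P,\I}$ requires $\Ext^{\geq 1}(C,\I)=0$, and from the triangle $C_Y\to I_0\to C\to\Sigma C_Y$ one sees that $\Ext^{1}(C,I)=0$ for all $I\in\I$ if and only if $\Hom(I_0,I)\to\Hom(C_Y,I)$ is surjective, i.e.\ if and only if $C_Y\to I_0$ is itself a left $\I$-approximation; but your $C_Y\to I_0$ is merely a factorization of $Y\to P_0\to I_0$, so the covariant-finiteness hypothesis cannot even be invoked for $C$. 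Your closing paragraph correctly diagnoses that an honest coresolution is needed, but the ``cone of coresolutions of $Y$ and $P_0$'' you propose reproduces the same uncontrolled $\Sigma Z_Y$-type terms, so this is a missing idea rather than delicate bookkeeping.

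The paper closes exactly this hole by a stronger induction, run along the $\P$-resolution of $X$ from the top instead of along the filtration from the bottom, following the proof of Remark~\ref{rmk:selfdual}. One first coresolves each $P\in\P$ by iterated left $\I$-approximation triangles $Y^k\to I^k\to Y^{k+1}$; the reason the hypothesis is covariant finiteness in $H^0(\C^{(d)}_{\P,\I})$ (and not just approximations of objects of $\P$) is precisely that the cosyzygies $Y^k$ land in the strata $\B^{(k)}_{\P,\I}$, so the hypothesis can be iterated. Then, with triangles $X_{i+1}\to P_i\to X_i$ and $X_{d+1}=P_{d+1}$, one shows by downward induction on $i$ that each syzygy $X_i$ admits a triangle $X_i\to J_i\to L_i$ with $J_i\in\I$ and $L_i$ $\I$-coresolvable of controlled length. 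The two devices missing from your argument are: (a) the splitting trick --- since every syzygy satisfies $\Ext^1(X_{i-1},\I)=0$, the pushout triangle $J_i\to M_i\to X_{i-1}\to\Sigma J_i$ splits, so $X_{i-1}$ is a direct summand of $M_i$; and (b) the Horseshoe Lemma~\ref{lem:horseshoe} applied to the companion triangle $P_{i-1}\to M_i\to L_i$, which coresolves $M_i$ (both $P_{i-1}$ and $L_i$ being already coresolved) and hence its summand $X_{i-1}$, the new cone being an extension of $J_i$ and the next cosyzygy, both killed by $\Ext^{\geq 1}(-,\I)$. The output is a triangle $X\to I_1\to L_1$ with $I_1\in\I$ (not merely in $\C^{(d)}_{\P,\I}$) and $\Ext^{\geq 1}(L_1,\I)=0$; this is stronger than your intermediate claim, and your factorization lemma then degenerates to the paper's one-line conclusion that $X\to I_1$ is a left $\I$-approximation.
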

\begin{proof}
Suppose $H^0(\I)$ is covariantly finite in $H^0(\C^{(d)}_{\P,\I})$.
Let $P$ be an object in $H^0(\P)$. 
By assumption, it admits triangles
\[
\begin{tikzcd}[row sep=small]
P=Y^0\ar[r,"f^0"]&I^0\ar[r]&Y^1\ar[r]&\Sigma P\\
Y^1\ar[r,"f^1"]&I^1\ar[r]&Y^2\ar[r]&\Sigma Y^1\\
&\ldots&&\\
Y^{d-1}\ar[r,"f^{d-1}"]&I^{d-1}\ar[r]&Y^{d}\ar[r]&\Sigma Y^{d-1}
\end{tikzcd}
\]
where $f^{k}$ is a left $H^0(\I)$-approximation for $Y^k$ for $0\leq k\leq d-1$. 
Let $X_1$ be any object in $H^0(\A^{(d)}_{\P,\I})$. 
Then it admits a sequence of triangles (\ref{conf:X}) where $P_i\in H^0(\P)$ for $1\leq i\leq d+1$. 
As in the proof of Remark~\ref{rmk:selfdual}, 
we see that $X_1$ admits a triangle $X_1\xrightarrow{f_1} I_1\rightarrow L_1\rightarrow \Sigma X_1$ 
where $I_1\in H^0(\I)$ and $L_1\in H^0(\A^{(d+1)}_{\P,\I})$. 
It follows that $f_1$ is a left $H^0(\I)$-approximation for $X_1$.
\end{proof}
We start the proof of Theorem~\ref{thm:d-Auslander correspondence} by showing that for a $d$-cluster tilting pair $(\P,\I)$, the dg subcategory $\A_{\P,\I}^{(d+1)}$ of $\pretr(\P)$ inherits a canonical exact dg structure whose corresponding extriangulated category is $d$-Auslander.
\begin{proposition}\label{prop:d-Auslander1}
\begin{itemize}
\item[(1)] For a $d$-cluster tilting pair $(\P,\I)$, the dg subcategory $\A_{\P,\I}^{(d+1)}$ of $\pretr(\P)$ inherits a canonical exact dg structure whose corresponding extriangulated category is $d$-Auslander.
Put $\A=\tau_{\leq 0}\A_{\P,\I}^{(d+1)}$.
\item [(2)] %Let $\Q$ (resp.~$\J$) be the full dg subcategory of $\A$ whose objects are the direct summands in $H^0(\A)$ of objects in $H^0(\P)\subset H^0(\A)$ (resp.~$H^0(\I)\subset H^0(\A)$) (so $H^0(\J)$ is covariantly finite in $H^0(\Q)$). 
We have that $\Q_{\P,\I}$ is the full dg subcategory of projectives in $\A_{\P,\I}^{(d+1)}$ and $\J_{\P,\I}$ is the full dg subcategory of projective-injectives in $\A_{\P,\I}$. Moreover $(\Q_{\P,\I},\J_{\P,\I})$ is a $d$-cluster tilting pair equivalent to $(\P,\I)$.
\item [(3)]The dg derived category of $\A$ is $\pretr(\P)$.
\end{itemize}
\end{proposition}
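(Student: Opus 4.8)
The plan is to present $\A^{(d+1)}_{\P,\I}$ as an extension-closed full dg subcategory of the pretriangulated dg category $\pretr(\P)$ and then read off the $d$-Auslander property from the filtration length and the cluster tilting hypothesis. By the theory of \cite{Chen23}, an extension-closed full dg subcategory of a pretriangulated dg category inherits a canonical exact dg structure whose conflations are the triangles of the ambient category with all three terms in the subcategory, and whose associated extriangulated category is $H^0$ with $\mathbb E^i(Z,X)=\Hom_{\tr(\P)}(Z,\Sigma^iX)$. So the first task is to show $\A^{(d+1)}_{\P,\I}=\U^{(d+1)}\cap\ker\Ext^{\geq1}(-,\I)$ is closed under extensions in $\tr(\P)$, where $\U^{(n)}=\P\ast\Sigma\P\ast\cdots\ast\Sigma^n\P$. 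The factor $\ker\Ext^{\geq1}(-,\I)$ is extension-closed by the long exact sequence obtained from applying $\Hom(-,\Sigma^\bullet I)$ to a triangle. For $\U^{(n)}$ I would induct on $n$: given a triangle $X\to Y\to Z\to\Sigma X$ with $X,Z\in\U^{(n)}$, take presentation triangles $U_X\to P_X\to X$ and $U_Z\to P_Z\to Z$ with $P_X,P_Z\in\add\P$ and $U_X,U_Z\in\U^{(n-1)}$; the map $P_Z\to Z$ lifts along $Y\to Z$ since the obstruction lies in $\Hom(P_Z,\Sigma X)$, which vanishes by connectivity of $\P$ after dévissage along the filtration of $\Sigma X$. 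Then $P_X\oplus P_Z$ deflates onto $Y$, and the Horseshoe Lemma (Lemma~\ref{lem:horseshoe}) exhibits its syzygy as an extension of $U_Z$ by $U_X$, hence in $\U^{(n-1)}$ by induction, so $Y\in\P\ast\Sigma\U^{(n-1)}=\U^{(n)}$; the base case $n=0$ is the splitting forced by $\Hom(\P,\Sigma\P)=0$. Intersecting the two extension-closed classes yields the exact dg structure.

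Next I would verify the numerical conditions for $H^0(\A^{(d+1)}_{\P,\I})$. Since $\P\subseteq\ker\Ext^{\geq1}(-,\I)$ (again by connectivity), the presentation triangle $U\to P\to X$ of any $X\in\A^{(d+1)}_{\P,\I}$ has outer term $U\in\U^{(d)}\cap\ker\Ext^{\geq1}(-,\I)$, so it is a conflation and $\P$ provides enough projectives. Each $P\in\P$ is projective because $\mathbb E(P,-)=\Hom(P,\Sigma-)$ vanishes on $\U^{(d+1)}$, and any projective splits off its presentation and is therefore a direct summand of an object of $\P$, i.e.\ lies in $\Q_{\P,\I}$; thus $\Q_{\P,\I}$ is exactly the subcategory of projectives. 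Positive global dimension at most $d+1$ is a dévissage: $\mathbb E^{d+2}(X,Y)=\Hom(X,\Sigma^{d+2}Y)=0$ because every contribution $\Hom(\Sigma^iP,\Sigma^{d+2}\Sigma^jP')$ with $0\leq i,j\leq d+1$ has strictly positive shift and vanishes by connectivity.

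The decisive point of part (1) is dominant dimension at least $d+1$, and this is the only place the $d$-cluster tilting hypothesis enters. Objects of $\I$ are injective because $\ker\Ext^{\geq1}(-,\I)$ kills $\mathbb E(-,\I)$, and projective since $\I\subseteq\P$, so $\I$ consists of projective-injectives. For a projective $P$ I would construct the first $d+1$ terms of an injective coresolution inside $\I$ by iterating left $H^0(\I)$-approximations: by Lemma~\ref{lem:covariantlyfinite} the hypothesis supplies, for each object of $H^0(\A^{(d)}_{\P,\I})$, a left $\I$-approximation realized by an inflation whose cokernel is built from fewer layers, exactly as in the injective-coresolution argument of Remark~\ref{rmk:selfdual}; performing this $d+1$ times yields the required conflations with projective-injective middle terms. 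This shows $H^0(\A^{(d+1)}_{\P,\I})$ is $d$-Auslander, and since $H^0(\tau_{\leq0}\A^{(d+1)}_{\P,\I})=H^0(\A^{(d+1)}_{\P,\I})$, the truncation $\A$ is $d$-Auslander as well, completing (1).

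For (2), the work above already identifies $\Q_{\P,\I}$ with the projectives and shows $\I$, hence $\J_{\P,\I}$, consists of projective-injectives; conversely any projective-injective splits off its inflation into $\I$ coming from the dominant dimension, so it lies in $\J_{\P,\I}$. That $(\Q_{\P,\I},\J_{\P,\I})$ is again $d$-cluster tilting and equivalent to $(\P,\I)$ I would deduce from Proposition~\ref{prop:weakly idempotent complete}: stability under kernels of retractions makes $\Q_{\P,\I}$ and $\J_{\P,\I}$ the summand-completions of $\P$ and $\I$, whence recomputing gives $\Q_{\Q_{\P,\I},\J_{\P,\I}}=\Q_{\P,\I}$ and likewise for $\J$, so the identity is the quasi-equivalence required by Definition~\ref{def:equ}, while covariant finiteness passes to summand-completions. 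For (3), every object of $\A^{(d+1)}_{\P,\I}$ admits a finite $\P$-resolution of length at most $d+1$ from its defining filtration, so $\P$ generates and, by the description of the bounded dg derived category of an exact dg category through its projectives \cite{Chen23}, $\D^b_{dg}(\A)=\pretr(\Q_{\P,\I})=\pretr(\P)$, unaffected by $\tau_{\leq0}$. I expect the two delicate points to be the extension-closure of $\U^{(n)}$ — since the naive concatenation of filtrations is too long and must be corrected via the Horseshoe Lemma and connectivity — and the dominant-dimension estimate, which is the sole consumer of the cluster tilting hypothesis and requires the approximations to be inflations with cokernels of controlled filtration length.
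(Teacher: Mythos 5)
Your proposal follows the paper's strategy almost step for step, and most of it is sound: the inductive proof that $\U^{(n)}$ is extension-closed (lifting the presentation $P_Z\to Z$ through $Y\to Z$ and identifying the syzygy of $P_X\oplus P_Z\to Y$ as an extension of $U_Z$ by $U_X$) is a legitimate expansion of the paper's one-line connectivity assertion; the identification of $\Q_{\P,\I}$ with the projectives, the global-dimension d\'evissage, the reduction of part (2) to Proposition~\ref{prop:weakly idempotent complete}, and part (3) via the comparison of higher extensions of $H^0(\A)$ with suspended Hom groups in $\tr(\P)$ all match the paper. But there is a genuine gap in the dominant-dimension step, the heart of part (1). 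Your iteration of left $H^0(\I)$-approximations works because the successive cones climb through the classes $\B^{(k)}_{\P,\I}$, and $\B^{(k)}_{\P,\I}\subseteq\A^{(d)}_{\P,\I}$ for $k\leq d$, so the hypothesis can be applied again at each stage. (Two remarks here: your phrase ``cokernel built from fewer layers'' has the direction reversed --- the filtration length of the cones grows, from $\B^{(k)}$ to $\B^{(k+1)}$; what is controlled is that it stays $\leq d$ through step $d$, with more layers becoming $\I$. And your appeal to Lemma~\ref{lem:covariantlyfinite} for approximations of objects of $H^0(\A^{(d)}_{\P,\I})$ is misplaced: that is the definition of a $d$-cluster tilting pair, while the lemma's content is the equivalence with covariant finiteness on $H^0(\C^{(d)}_{\P,\I})$, which is what the $\B^{(k)}$-bookkeeping actually uses.) This scheme, however, only treats projectives lying in $\A^{(d)}_{\P,\I}$ --- in particular all of $\P$ --- whereas the projectives of $H^0(\A^{(d+1)}_{\P,\I})$ are all of $\Q_{\P,\I}$, the $H^0$-summands of objects of $\P$, as your own argument shows. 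For $X\in\Q_{\P,\I}\setminus\P$ the recipe stalls immediately: $X$ is only known to lie in $\A^{(d+1)}_{\P,\I}$, so the hypothesis supplies no approximation of $X$; one can manufacture a left $\I$-approximation by splitting $X$ off $P_0\cong X\oplus X_1$ and composing with an approximation $u\colon P_0\to I$, but the cone $U$ of $X\to I$ then sits a priori only in $\P\ast\Sigma\,\U^{(d+1)}$, and in any case is not an object to which the covariant-finiteness hypothesis applies, so it cannot be re-approximated and the iteration cannot continue.

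This is precisely where the paper's proof does something your sketch omits: it forms the octahedron (\ref{dia:X}) on $X\to P_0\to I$, exhibiting $U$ as an extension $X_1\to U\to Y$ with $Y=\operatorname{cone}(P_0\to I)\in\B^{(1)}_{\P,\I}$, and then assembles an $\I$-coresolution of $U$ of length $d$ by the dual of the Horseshoe Lemma (Lemma~\ref{lem:horseshoe}) from the iteratively constructed coresolution of $Y$ and a coresolution of the complement $X_1$ of length $d+1$ (available by the $\P$-case, or by a downward induction through the split presentation triangles when $X_1$ itself is only a summand); the conflation $X\to I\to U$ then extends to the required coresolution of $X$. Your deferral to ``exactly as in the injective-coresolution argument of Remark~\ref{rmk:selfdual}'' gestures at Horseshoe manipulations of this kind, but the summand problem is never engaged, and the one delicacy you flag for this step (approximations with cokernels of controlled filtration length) concerns only the $\P$-case, which is the easy half. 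To complete your argument you must add the octahedron-plus-Horseshoe reduction for projectives in $\Q_{\P,\I}\setminus\P$.
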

\begin{proof}
Since $\P$ is connective, the full subcategory $\P\ast\Sigma \P\ast\cdots \ast\Sigma^{d+1}\P$ is extension-closed in $\pretr(\P)$ and hence $\A$ is also extension-closed in $\pretr(\P)$.

It is clear that $\Q_{\P,\I}$ is the full dg subcategory of projective objects: suppose $X$ is projective in $\A$. We have triangles
\begin{equation}\label{tri:X}
\begin{tikzcd}
X_1\ar[r]& P_0\ar[r] &X\ar[r]& \Sigma X_1,
\end{tikzcd}
\end{equation}
\begin{equation}\label{tri:X_1}
\begin{tikzcd}
X_2\ar[r]& P_1\ar[r]& X_1\ar[r] &\Sigma X_2,
\end{tikzcd}
\end{equation}
\[
\ldots
\]
\begin{equation}\label{tri:X_d}
\begin{tikzcd}
X_{d+1}=P_{d+1}\ar[r]&P_{d}\ar[r]&X_{d}\ar[r]&\Sigma X_{d+1}.
\end{tikzcd}
\end{equation}
Since $\Ext^2(X,\I)=0$, from the triangle (\ref{tri:X}) we have $\Ext^1(X_1,\I)=0$ and therefore $X_1$ also lies in $\A$. Similarly, we have that all $X_i$ lie in $\A$ for $1\leq i\leq d+1$. Then since $X$ is projective in $\A$, the first triangle splits and $X_1$ is a direct summand of $P_0$ and hence is projective in $\A$. Similarly, we have that all the triangles split and therefore $X$ lies in $\Q_{\P,\I}$.

It is also clear that the objects in $\I$ are projective-injective.
 
Let us show that $H^0(\A)$ has dominant dimension at least $d+1$: suppose $X$ is projective in $\A$ with the triangles (\ref{tri:X}-\ref{tri:X_d}), then the triangle (\ref{tri:X}) splits. 
By the above, we may assume $X_1\in\P$.
We have a triangle
\begin{equation}\label{tri:I}
P_0\xrightarrow{u} I\xrightarrow{v} Y\xrightarrow{w} \Sigma X
\end{equation}
where $I\in\I$ and $u$ is a left $\I$-approximation of $P_2$. 
Then it is clear that $Y$ lies in $\A$.
We have the following diagram
\begin{equation}
\begin{tikzcd}\label{dia:X}
X\ar[r,equal]\ar[d]&X\ar[d]&&\\
P_0\ar[r,"u"]\ar[d]&I\ar[r,"v"]\ar[d]&Y\ar[d,equal]\ar[r]&\Sigma P_0\ar[d]\\
X_1\ar[r]&U\ar[r]&Y\ar[r]&\Sigma X_1
\end{tikzcd}.
\end{equation}
Since $Y$ admits an $\I$-coresolution of length $d$  and  $X_1\in \P$ has an $\I$-coresolution of length $d+1$, the object $U$ admits an $\I$-coresolution of length $d$ by the Horseshoe Lemma.
%, as the following diagram shows
%\begin{equation}
%\begin{tikzcd}\label{dia}
%\Sigma^{-1}Y\ar[d,"\Sigma^{-1}(g)"swap]\ar[r]&X_1\ar[r]\ar[d,"h"]&U\ar[d]\ar[r]&Y\ar[d,"g"]\\
%\Sigma^{-1}I_0\ar[r,"u"]\ar[d]&I_1\ar[r,"v"]\ar[d]&W\ar[d]\ar[r]&I_0\ar[d]\\
%\Sigma^{-1}J\ar[r]&U_1\ar[r]&Z\ar[r]&J
%\end{tikzcd}
%\end{equation}
%where $g$ is a left $\I$-approximation of $Y$ (so $J\in\A$) and $h$ is a left $\I$-approximation of $X_1$ (so $U_1\in\A$ and $W\in\I$). 
 Hence the middle column of the diagram (\ref{dia:X}) shows that $X$ admits an $\I$-coresolution of length $d+1$.

If $X$ is moreover injective, then the triangle in the middle column of~\ref{dia:X} splits and hence $X$ lies in $\J_{\Q,\I}$.

Let us show that the dg derived category is $\pretr(\P)$. 
By \cite[Proposition 6.23]{Chen23}, it is enough to show that 
the higher extensions of the extriangulated category $H^0(\A)$ 
are naturally isomorphic to the corresponding suspended Hom groups of $\tr(\P)$.
This is clear from the fact that $\P$ is projective in $H^0(\A)$, i.e.~$\Hom_{\tr(\P)}(P,\Sigma^{\geq 1} X)=0$ for $P\in \P$ and $X\in H^0(\A)$, and $\Hom_{\tr(\P)}(X,\Sigma^{i} Y)=0$ for $X$, $Y\in \P$ and $i\geq 1$.
\end{proof}

\begin{proposition}\label{prop:d-Auslander}
Let $\A'$ be a connective exact dg category such that $H^0(\A')$ is $d$-Auslander. 
Suppose that $\P\subset \A'$ is the full dg subcategory of projective objects and $\I\subset \P$ is the full dg subcategory of projective-injective objects (so $(\P,\I)$ is $d$-cluster tilting).

%Let $\I$ be the additive closure of $\J$ in $\Q$ where $\Q$ is the additive closure of $\P$ in $\A$ as described by Lemma~\ref{lem:1-Auslander}. 
Then $\A'$ is equivalent (in the sense of Definition~\ref{def:equ}) to $\A=\tau_{\leq 0}(\A_{\P,\I}^{(d+1)})$.
When $\I=0$, we have that $\A'$ is actually quasi-equivalent to $\A$.
\end{proposition}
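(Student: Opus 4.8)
The plan is to carry out the whole comparison inside the bounded derived dg category, which I first identify for both sides. Since $\A'$ is connective with full dg subcategory of projectives $\P$ and $H^0(\A')$ has positive global dimension at most $d+1$, every object of $\A'$ has a projective resolution of length at most $d+1$; hence $\P$ classically generates $\D^b_{dg}(\A')$ and the canonical dg functor $\pretr(\P)\to \D^b_{dg}(\A')$ is a quasi-equivalence. By Proposition~\ref{prop:d-Auslander1}(3) the derived dg category of $\A=\tau_{\leq 0}\A_{\P,\I}^{(d+1)}$ is again $\pretr(\P)$, via the same copy of $\P$. Under these identifications both $H^0(\A')$ and $H^0(\A)=H^0(\A_{\P,\I}^{(d+1)})$ become extension-closed full subcategories of $\tr(\P)$, and the comparison functor is the restriction of the identity of $\tr(\P)$; in particular it is automatically fully faithful. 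Thus the statement reduces to comparing these two subcategories, and since in Definition~\ref{def:equ} the operation $\overline{(-)}$ is computed as closure under kernels of retractions inside $\D^b$, I must show that $H^0(\A')$ and $H^0(\A_{\P,\I}^{(d+1)})$ have the same closure under summands in $\tr(\P)$.

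First I would check the inclusion $H^0(\A')\subseteq \A_{\P,\I}^{(d+1)}$. A projective resolution of length at most $d+1$ places any $X\in H^0(\A')$ in $\P\ast\Sigma\P\ast\cdots\ast\Sigma^{d+1}\P$, and injectivity of the objects of $\I$ gives $\Ext^{\geq 1}(X,\I)=0$; these are exactly the two defining conditions of $\A_{\P,\I}^{(d+1)}$. Since $H^0(\A_{\P,\I}^{(d+1)})$ is closed under kernels of retractions in $\tr(\P)$ by Proposition~\ref{prop:weakly idempotent complete}, and therefore coincides with $\overline{\A}$, this already yields $\overline{\A'}\subseteq \overline{\A}=\A_{\P,\I}^{(d+1)}$.

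The heart of the proof is the reverse inclusion: every $Y\in \A_{\P,\I}^{(d+1)}$ is a summand of an object of $H^0(\A')$. Here I would resolve $Y$ by projectives inside the $d$-Auslander extriangulated category $H^0(\A_{\P,\I}^{(d+1)})$: choosing deflations $P_i\twoheadrightarrow Y_i$ with $P_i\in\P$, the syzygies satisfy $Y_{i+1}\in\A_{\P,\I}^{(d+1-i)}$ (the orthogonality $\Ext^{\geq 1}(-,\I)=0$ propagates along the triangles because the $P_i$ are projective), and after $d+1$ steps one reaches $Y_{d+1}\in\P$. I would then prove by downward induction that each $Y_i$ lies in $H^0(\A')$. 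The only nontrivial point is that the connecting map $Y_{i+1}\to P_i$ is an inflation for the exact structure of $\A'$, equivalently that its cone $Y_i$ again lies in $H^0(\A')$. This is where the hypothesis that $\A'$ is $d$-Auslander enters decisively: I would use the dominant dimension $\geq d+1$ to coresolve the projectives by the projective-injectives $\I$, so that, together with its self-dual counterpart furnished by Remark~\ref{rmk:selfdual}, the orthogonality $\Ext^{\geq 1}(Y_i,\I)=0$ (which, given the bounded filtration and connectivity of $\P$, even upgrades to $\Hom_{\tr(\P)}(Y_i,\Sigma^{\neq 0}\I)=0$) becomes strong enough to detect membership in $H^0(\A')$ inside $\tr(\P)$. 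Granting this, $Y$ lies in $H^0(\A')$ up to a summand, and combining with the previous paragraph and Proposition~\ref{prop: equivalence d-Auslander} we obtain a quasi-equivalence $\overline{\A'}\iso\overline{\A}$, that is, the equivalence of Definition~\ref{def:equ}.

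I expect this detection step — that orthogonality to $\I$ together with the filtration forces an object of $\tr(\P)$ into the exact subcategory $H^0(\A')$ — to be the main obstacle, precisely because an exact category need not be the heart of a $t$-structure on its derived category, so the argument must be homological (through $\I$ and the dominant dimension) rather than through truncation functors. Finally, for the case $\I=0$ the orthogonality condition is vacuous and $\A_{\P,\I}^{(d+1)}=\P\ast\Sigma\P\ast\cdots\ast\Sigma^{d+1}\P$; here I would run the same reconstruction and observe that the syzygy process returns honest objects of $H^0(\A')$ rather than merely summands, so that the comparison functor is essentially surjective without passing to idempotent completions and is therefore already a quasi-equivalence $\A'\iso\A$.
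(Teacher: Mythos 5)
Your setup agrees with the paper's: you identify $\pretr(\P)\iso\D^b_{dg}(\A')$ using that $H^0(\A')$ has positive global dimension at most $d+1$, invoke Proposition~\ref{prop:d-Auslander1}~(3) for $\A$, and correctly reduce the statement to comparing two extension-closed subcategories of $\tr(\P)$, with the easy inclusion $H^0(\A')\subseteq H^0(\A^{(d+1)}_{\P,\I})$ handled as in the paper. But at the crux you write ``Granting this'': the claim that $\Ext^{\geq 1}(-,\I)=0$ together with the filtration by $\P$ ``detects membership in $H^0(\A')$'' is exactly the content that needs proof, and your proposal contains no mechanism for it. Worse, the induction statement you propose --- that each syzygy $Y_i$ lies in $H^0(\A')$ --- is false in general; what is true, and what the paper proves, is only that $Y_i$ becomes an object of $H^0(\A')$ \emph{after adding a projective-injective summand}, i.e.\ $Y_i\in\overline{\A'}$. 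This is precisely why the conclusion is an equivalence in the sense of Definition~\ref{def:equ} (via closures under kernels of retractions) rather than a quasi-equivalence $\A'\iso\A$ when $\I\neq 0$. Your parenthetical upgrade $\Hom_{\tr(\P)}(Y_i,\Sigma^{\neq 0}\I)=0$ is also wrong: connectivity of $\P$ kills $H^{>0}$ of the Hom complexes, not $H^{<0}$, so $\Hom_{\tr(\P)}(Y,\Sigma^{-1}I)$ need not vanish even for $Y\in\P$; and the self-duality of Remark~\ref{rmk:selfdual} is not the tool used here.

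The paper fills the gap constructively. Given $X\in\A^{(d+1)}_{\P,\I}$ with syzygy triangles ending in $X_{d+1}=P_{d+1}\in\P$, the dominant dimension of $H^0(\A')$ gives a conflation $P_{d+1}\rightarrow I_{d+1}\rightarrow U_{d+1}$ with $I_{d+1}\in\I$ and $U_{d+1}\in H^0(\A')$ admitting an $\I$-coresolution of length $d$. Forming the diagram~(\ref{dia:Y}), the object $V_d$ is an extension of $U_{d+1}$ by $P_d$, hence lies in $H^0(\A')$; the orthogonality $\Hom(X_d,\Sigma I_{d+1})=0$ splits the conflation $I_{d+1}\rightarrow V_d\rightarrow X_d$, so $X_d\oplus I_{d+1}\cong V_d\in H^0(\A')$. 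The (co)resolution Horseshoe argument then equips $V_d$ with an $\I$-coresolution of length $d$, and one iterates with the modified presentations $V_i\rightarrow P_{i-1}\oplus I_{i+1}\rightarrow X_{i-1}$, each splitting off the relevant projective-injective because $\Hom(X_{i-1},\Sigma I_i)=0$; after $d+1$ steps this shows $X\in\overline{\A'}$. Your remark on the case $\I=0$ is correct in spirit, but only becomes a proof once this construction is in place: there all the $I_j$ vanish, so $V_i\cong X_i$ lies in $H^0(\A')$ on the nose and the comparison is a genuine quasi-equivalence. Without the explicit construction of the objects $V_i$, your argument does not close.
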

\begin{proof}
We have the following diagram
\[
\begin{tikzcd}
\P\ar[r]\ar[d,hook]&\A'\ar[r,hook]&\D^b_{dg}(\A')\\
\pretr(\P)\ar[rru,"\mu"swap]&&
\end{tikzcd}.
\]
The induced morphism $\P\rightarrow \D^b_{dg}(\A')$ is quasi-fully faithful and $\D^b(\A')$ is generated as a triangulated category by the objects in $\P$. 
Therefore the morphism $\mu:\pretr(\P)\rightarrow \D^b_{dg}(\A')$ is a quasi-equivalence.
Recall that $\pretr(\P)$ is the dg derived category of $\A$ by~Proposition~\ref{prop:d-Auslander1} (3).

It is enough to show that the quasi-essential image of $\A\subset \pretr(\P)$ under the morphism $\mu$ is in the closure of $\A'$ under kernels of retractions in $\D^b_{dg}(\A')$.

Let $X\in \D^b(\A')$ be an object with triangles~(\ref{tri:X}), (\ref{tri:X_1}) and (\ref{tri:X_d}). Assume that 
\[
\Hom_{\D^b(\A')}(X,\Sigma^{\geq 1} \I)=0.
\] 
We claim that $X$ lies in the closure of $\A'$ under kernels of retractions in $\D^b(\A')$.
Indeed, since $H^0(\A')$ is $d$-Auslander and the object $X_{d+1}=P_{d+1}$ in the triangle \ref{tri:X_d} is projective in $H^0(\A')$, it admits a triangle as follows
\[
P_{d+1}\rightarrow I_{d+1}\rightarrow U_{d+1}\rightarrow \Sigma P_{d+1}
\]
where $I_{d+1}\in \I$ and $U_{d+1}$ is in $H^0(\A')$.
We have the following diagram
\begin{equation}
\begin{tikzcd}\label{dia:Y}
P_{d+1}\ar[r,""]\ar[d]&P_{d}\ar[r,""]\ar[d]&X_{d}\ar[d,equal]\ar[r]&\Sigma P_{d+1}\ar[d]\\
I_{d+1}\ar[r]\ar[d]&V_{d}\ar[r]\ar[d]&X_{d}\ar[r]&\Sigma I_{d+1}\\
U_{d+1}\ar[r,equal]&U_{d+1}& &
\end{tikzcd}.
\end{equation}
By assumption we have $\Hom(X_d,\Sigma I_{d+1})=0$ and hence $X_{d}\oplus I_{d+1}$ is isomorphic to $V_{d}\in \A'$. 
The object $P_d$ admits an $\I$-coresolution of length $d+1$ and $U_{d+1}$ admits an $\I$-coresolution of length $d$.
From the middle column, we see that $V_d$ admits an $\I$-coresolution of length $d$.
%There exist triangles in $\D^b(\A')$ 
%\[
%P_1\rightarrow I_1\rightarrow U_1\rightarrow \Sigma P_1
%\] 
%and
%\[
%U_2\rightarrow I_2'\rightarrow J_2\rightarrow \Sigma U_2
%\] 
%with $I_1$, $I_2'$ both lie in $\I$ and $U_1$ lies in $\A'$.
%So there is a triangle
%\[
%V\rightarrow I'\rightarrow W\rightarrow \Sigma V
%\]
%with $I'\in \I$ and $W\in \A'$.
So we have the following diagram
\[
\begin{tikzcd}
V_{d}\ar[r]\ar[d]&P_{d-1}\oplus I_{d+1}\ar[r]\ar[d]& X_{d-1}\ar[d,equal]\\
I_{d}\ar[r]\ar[d]&V_{d-1}\ar[d]\ar[r]&X_{d-1}\\
U_{d}\ar[r,equal]&U_{d}&
\end{tikzcd}
\]
where $V_{d-1}$ lies in $\A'$ as an extension of $P_{d-1}\oplus I_{d+1}$ and $U_{d}$, and the second row splits since $\Hom(X,\Sigma I_{d})=0$. We repeat the above procedure and we see that $X$ lies in $\overline{\A'}$.

On the other hand, it is clear that each object in $\A'$ is from an object in $\A$.
\end{proof}

\begin{proof}[Proof of Theorem~\ref{thm:d-Auslander correspondence}]\label{proof:main theorem}
By Proposition~\ref{prop: equivalence d-Auslander}, the property of being $d$-Auslander is invariant under the equivalence relation introduced in Definition~\ref{def:equ}.
Notice that for a $d$-Auslander exact dg category, the inclusion $\A\rightarrow \overline{\A}$ induces a quasi-equivalence $\D^b_{dg}(\A)\iso \D^b_{dg}(\overline{\A})$ and $\D^b_{dg}(\A)$ is quasi-equivalent to the pretriangulated hull of the full dg subcategory of $\A$ on the projectives. 
By the proof of Proposition~\ref{prop: equivalence d-Auslander}, we have that the full dg subcategory $\P_{\overline{\A}}$ of $\overline{\A}$ on the projectives is quasi-equivalent to $\Q_{\P,\I}$. A similar argument shows that the full dg subcategory $\I_{\overline{\A}}$ on the projective-injectives is quasi-equivalent to $\J_{\P,\I}$.
Therefore the map sending the equivalence class of $\A$ to the equivalence class of $(\P,\I)$ is well-defined.

For a $d$-cluster tilting pair $(\P,\I)$, the inclusion dg functor $\P\hookrightarrow\Q_{\P,\I}$ induces a quasi-equivalence $\pretr(\P)\iso \pretr(\Q_{\P,\I})$. 
It is not hard to check that this induces a quasi-equivalence $\A_{\P,\I}^{(d+1)}\iso\A_{\Q_{\P,\I},\J_{\P,\I}}^{(d+1)}$: we observe $\P\ast\Sigma \P=\Q_{\P,\I}\ast\Sigma\Q_{\P,\I}$ and then by induction we have $\Q_{\P,\I}\ast\cdots\ast \Sigma^{n-1}\Q_{\P,\I}\ast\Sigma^n\Q_{\P,\I}=\P\ast\cdots\ast\Sigma^{n-1}\P\ast\Sigma^n\Q_{\P,\I}=\P\ast\cdots\ast\Sigma^{n-1}\P\ast\Sigma^{n}\P$.
Therefore the map sending the equivalence class of a pair $(\P,\I)$ to the equivalence class of $\tau_{\leq 0}\A_{\P,\I}^{(d+1)}$ is well-defined.

Combining Proposition~\ref{prop:d-Auslander1} and Proposition~\ref{prop:d-Auslander}, we see that the above maps are inverse to each other.
\end{proof}
\section{Connection to Quillen exact categories}

In this section we give necessary and sufficient conditions on the $d$-cluster tilting pair $(\P,\I)$ when the connective $d$-Aulander exact dg category $\tau_{\leq 0}\A_{\P,\I}^{(d+1)}$ is concentrated in degree zero (resp.~is an abelian category).
We therefore obtain a generalization of the higher Auslander correspondence for exact categories (resp.~abelian categories) in \cite[Theorem 5.2]{EbrahimiNasrIsfahani21} in the case of exact categories with split retractions (resp.~in \cite[Theorem 8.23]{Beligiannis15}).
\subsection{Exact categories}
\begin{proposition}\label{prop:Quillenexact}
The exact dg categories in Theorem~\ref{thm:d-Auslander correspondence} (1) which are Quillen exact correspond to the $d$-cluster tilting pairs $(\P,\I)$ where 
\begin{itemize}
\item[1)]  $\P$ is concentrated in degree $0$;
\item[2)]  for each object $P$ in $\P$, there is a monomorphism $P^{\wedge}\rightarrow I^{\wedge}$ in $\Mod\P$ where $I\in\I$;
\item[3)]  a complex
\begin{equation}\label{seq:P}
0\rightarrow P_{d+1}^{\wedge}\xrightarrow{f_{d}^{\wedge}} P_{d}^{\wedge}\rightarrow \ldots\xrightarrow{f_{1}^{\wedge}}  P_1^{\wedge}\xrightarrow{f_{0}^{\wedge}} P_0^{\wedge}
\end{equation}
in $\Mod\P$ is exact if the corresponding complex
\begin{equation}\label{seq:Q}
 \Hom_{\P}(P_0,I)\rightarrow \Hom_{\P}(P_1,I)\rightarrow \ldots\rightarrow\Hom_{\P}(P_{d},I)\rightarrow \Hom_{\P}(P_{d+1},I)\rightarrow 0
\end{equation}
is exact for any $I$ in $\I$.
\end{itemize}
For such a pair $(\P,\I)$, if we identify $\pretr(\P)$ with $\C^b_{dg}(\P)$ thanks to item 1), then $\A_{\P,\I}^{(d+1)}$ is identified with the full dg subcategory of complexes
\begin{equation}
\begin{tikzcd}
\ldots\ar[r]&0\ar[r]&P_{d+1}\ar[r]&P_{d}\ar[r]&\ldots\ar[r]&P_1\ar[r]&P_0\ar[r]&0\ar[r]&\ldots
\end{tikzcd}
\end{equation}
where $P_i$ is in degree $-i$ and such that the corresponding complex (\ref{seq:Q})
 is exact for any $I\in\I$.
 \end{proposition}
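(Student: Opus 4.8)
The plan is to prove the Proposition in two directions, characterizing when the exact dg category $\A=\tau_{\leq 0}\A_{\P,\I}^{(d+1)}$ from Theorem~\ref{thm:d-Auslander correspondence} is Quillen exact, i.e.\ concentrated in degree zero. Recall that an extriangulated (or exact dg) category is Quillen exact precisely when it is concentrated in degree $0$, equivalently when $H^0(\A)=\A$ as a $k$-category with no negative $\Hom$'s. So the central task is to translate the condition ``$\A_{\P,\I}^{(d+1)}$ is concentrated in degree $0$'' into intrinsic conditions on the pair $(\P,\I)$, and this is what conditions 1)--3) should encode.

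First I would establish the final identification clause, since the forward direction of the equivalence relies on it. Assuming $\P$ is concentrated in degree $0$ (condition 1)), the pretriangulated hull $\pretr(\P)$ is quasi-equivalent to the dg category $\C^b_{dg}(\P)$ of bounded complexes over $\P$; under this identification the iterated extension $\P\ast\Sigma\P\ast\cdots\ast\Sigma^{d+1}\P$ corresponds to complexes concentrated in degrees $0,-1,\ldots,-(d+1)$, i.e.\ to complexes of the form displayed in the statement with $P_i$ in degree $-i$. Then $\A_{\P,\I}^{(d+1)}$ is cut out inside this by the condition $\ker\Ext^{\geq 1}(-,\I)$. The key computation here is that for such a complex $P_\bullet$ and $I\in\I$, the higher extension groups $\Ext^{j}(P_\bullet,I)$ computed in $\pretr(\P)\simeq\C^b_{dg}(\P)$ are the cohomologies of the Hom-complex $\Hom_{\P}(P_\bullet,I)$, which after applying the Yoneda embedding $P\mapsto P^\wedge$ is exactly the complex (\ref{seq:Q}). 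So $\Ext^{\geq 1}(P_\bullet,\I)=0$ if and only if (\ref{seq:Q}) is exact for all $I\in\I$; this gives the claimed description of $\A_{\P,\I}^{(d+1)}$.

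Next I would handle the equivalence of conditions. For the direction assuming $\A$ is Quillen exact: condition 1) is immediate since $\P\subset H^0(\A)$ and $\A$ concentrated in degree $0$ forces $\P$ to be as well. For condition 2), I would use that $H^0(\A)$ has enough injectives (each projective $P$ admits an inflation into a projective-injective $I\in\I$); an inflation in a Quillen exact structure on $\Mod\P$-type data is precisely a morphism $P\to I$ whose Yoneda image $P^\wedge\to I^\wedge$ is a monomorphism, yielding 2). For condition 3), the point is that when $\A$ is concentrated in degree $0$, the objects of $\A_{\P,\I}^{(d+1)}$ must have their negative cohomologies vanish; using the identification with complexes, the requirement that a complex $P_\bullet$ as above actually lies in $\A_{\P,\I}^{(d+1)}$ (equivalently that it defines an acyclic-away-from-degree-$0$ object, so that the associated functor $\cok(f_0^\wedge)$ has a length-$(d+1)$ projective resolution by representables) is governed by the exactness of (\ref{seq:P}), and condition 3) asserts that this is detected by the dual exactness of (\ref{seq:Q}). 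This is essentially the statement that the exact structure is ``injectively cogenerated'' by $\I$.

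The hard part will be condition 3), specifically proving the converse implication: that 1)--3) together force $\A_{\P,\I}^{(d+1)}$ to be concentrated in degree $0$ rather than merely giving a necessary constraint. Here I expect the main obstacle to be controlling the negative $\Hom$-groups $\Hom_{\pretr(\P)}(X,\Sigma^{-n}Y)$ for $X,Y\in\A_{\P,\I}^{(d+1)}$ and showing they all vanish for $n\geq 1$. The strategy would be to express such a negative Hom, via the complex description, as a map of complexes that must be null-homotopic, and to reduce its vanishing to the exactness of complexes of the form (\ref{seq:P}); condition 3) then converts the hypothesis ``$X\in\ker\Ext^{\geq 1}(-,\I)$'', which controls (\ref{seq:Q}), into the exactness of (\ref{seq:P}), closing the argument. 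The delicate point is that exactness of (\ref{seq:P}) is exactness in $\Mod\P$ of a complex of representable functors, and one must check that this is exactly the homological vanishing needed, using the $d$-cluster tilting hypothesis on $(\P,\I)$ (via Lemma~\ref{lem:covariantlyfinite}) to guarantee that $\I$-coresolutions of the required length exist and interact correctly with the truncation $\tau_{\leq 0}$.
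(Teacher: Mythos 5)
Your proposal is correct and takes essentially the same route as the paper: identify $\pretr(\P)$ with $\C^b_{dg}(\P)$ using 1), observe that membership of a complex in $\A^{(d+1)}_{\P,\I}$ amounts exactly to exactness of (\ref{seq:Q}) since $\Ext^{j}(-,\I)$ is computed by the Hom complex, derive 1)--3) from the vanishing of negative Hom groups when $\A$ is concentrated in degree $0$, and conversely use 3) to convert exactness of (\ref{seq:Q}) into exactness of (\ref{seq:P}) for every object, which forces all negative Homs between objects of $\A^{(d+1)}_{\P,\I}$ to vanish. The only cosmetic deviations are your argument for item 2) via inflations being monomorphisms (the paper instead computes $H^{-1}$ of the cone over a left $\I$-approximation) and a superfluous appeal to Lemma~\ref{lem:covariantlyfinite} in the converse direction, where covariant finiteness is not actually needed.
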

 
\begin{proof}
Let us assume that $\tau_{\leq 0}\A_{\P,\I}^{(d+1)}$ is concentrated in degree 0. 
Since $\P$ is contained in $\A_{\P,\I}^{(d+1)}$, it is concentrated in degree 0. 
This shows item 1).
Since $\I$ is covariantly finite in $\P$, for each object $P\in \P$, 
there is a left $\I$-approximation $P\rightarrow I$ in $\P$ where $I\in\I$. 
The morphism $P^{\wedge}\rightarrow I^{\wedge}$ is a monomorphism in $\Mod\P$ 
provided $\B^{(1)}_{\P,\I}$ is concentrated in degree 0: we complete it into a triangle
\[
P\rightarrow I\rightarrow M\rightarrow \Sigma P
\]
Then we have $M\in \B^{(1)}_{\P,\I}$. 
For each object $Q\in\P$, we apply the cohomological functor $\Hom(Q,-)$ to the above triangle and we get a long exact sequence
\[
\Hom(Q,\Sigma^{-1}I)\rightarrow \Hom(Q,\Sigma^{-1}M)\rightarrow \Hom(Q,P)\rightarrow\Hom(Q,I)\rightarrow \Hom(Q,M)\rightarrow 0.
\]
The leftmost term is zero since $\P$ is concentrated in degree 0. 
So $\Hom(Q,\Sigma^{-1}M)=0$ for each $Q\in\P$ if and only if $P^{\wedge}\rightarrow I^{\wedge}$ is a monomorphism in $\Mod\P$.
Since $\tau_{\leq 0}\A_{\P,\I}^{(d+1)}$ is concentrated in degree 0, we have $\Hom(\P,\Sigma^{-1}M)=0$ and hence $P^{\wedge}\rightarrow I^{\wedge}$ is a monomorphism in $\Mod\P$. This shows item 2).

If a complex (\ref{seq:P}) in $\Mod\P$ is such that the complex (\ref{seq:Q}) is exact, then it defines an object $X=X_0$ in $\A^{(d+1)}_{\P,\I}$ as follows. First we complete the morphism $f_{d}$ to a triangle:
\[
P_{d+1}\xrightarrow{f_{d}} P_{d}\rightarrow X_{d}\rightarrow \Sigma P_{d+1}.
\]
Then we have $X_{d}\in \A_{\P,\I}^{(d+1)}$. 
Since $\P$ is concentrated in degree $0$, for each $I\in\I$ we have a short exact sequence
\begin{equation}\label{seq:X_d}
0\rightarrow \Hom_{\tr(\P)}(X_{d},I)\rightarrow \Hom_{\tr(\P)}(P_{d},I)\rightarrow \Hom_{\tr(\P)}(P_{d+1},I)\rightarrow 0.
\end{equation}
Since $\P$ is concentrated in degree $0$, 
the morphism $f_{d-1}$ induces a unique map $X_{d}\rightarrow P_{d-1}$ and we complete it into a triangle
\[
X_{d}\rightarrow P_{d-1}\rightarrow X_{d-1}\rightarrow \Sigma X_{d}.
\]
By the sequences (\ref{seq:Q}) and (\ref{seq:X_d}), we have that the morphism 
\[
\Hom(P_{d-1},\I)\rightarrow \Hom(X_d,\I)
\]
 is a surjection and hence $X_{d-1}\in\A_{\P,\I}^{(d+1)}$. We repeat the above procedure and we get a sequence of triangles
\begin{equation}\label{seq:X_i}
X_{i+1}\xrightarrow{f_i} P_{i}\rightarrow X_{i}\rightarrow \Sigma X_{i+1}
\end{equation}
for $0\leq i\leq d$ where $X_{d+1}=P_{d+1}$ and $X_{i}\in \A_{\P,\I}^{(d+1)}$.
Since $\tau_{\leq 0}\A_{\P,\I}^{d+1}$ is concentrated in degree $0$, we have $\Hom_{\tr(\P)}(\Sigma^{j}Q,X_i)=0$ for $Q\in\P$ and $j\geq 1$ and $0\leq i\leq d+1$.
For each $Q\in\P$ we apply the cohomological functor $\Hom(Q,-)$ to the sequences (\ref{seq:X_i}) for $0\leq i\leq d+1$ and we see that the complex (\ref{seq:P}) is exact. 
This shows item 3).

Conversely, suppose the pair $(\P,\I)$ in (2) moreover satisfies the conditions 1)--3). 
Then each object $X=X_0$ in $\A_{\P,\I}^{(d+1)}$ can be obtained by a sequence of triangles (\ref{seq:X_i}) 
for $0\leq i\leq d+1$ where $X_{d+1}=P_{d+1}$ and $P_i\in\P$ and $X_i\in \A_{\P,\I}^{(d+1)}$ for each $i$.
So the complex (\ref{seq:P}) satisfies that the corresponding complex (\ref{seq:Q}) is exact and hence by 3), 
the complex (\ref{seq:P}) is exact. 
We identify $\tr(\P)$ with the homotopy category of bounded complexes over $\P$ and $X$ is identified with the complex
\begin{equation}\label{seq:X}
\ldots\rightarrow 0\rightarrow P_{d+1}\xrightarrow{f_{d}} P_d\xrightarrow{f_{d-1}}\ldots\xrightarrow{f_1} P_1\xrightarrow{f_0} P_0\rightarrow 0\rightarrow \ldots
\end{equation}
where $P_i$ is in degree $i$. By the exactness of the complex (\ref{seq:P}), it is straightforward to verify that $\tau_{\leq 0}\A_{\P,\I}^{(d+1)}$ is concentrated in degree $0$.
\end{proof}
\begin{remark}If a pair $(\P,\I)$ satisfies the conditions in Proposition~\ref{prop:Quillenexact} for an integer $d$, then it does so for all integers $\leq d$.
 \end{remark}
 \begin{corollary}\label{cor:Auslander--Iyama correspondence for exact categories}%[Corollary~\ref{cor:Auslander--Iyama correspondence exact}]
There is a bijective correspondence between the following:
\begin{itemize}
\item[(1)] equivalence classes of exact categories $\A$ which are $d$-Auslander as extriangulated categories;
\item[(2)] equivalence classes of $d$-cluster tilting pairs $(\P,\I)$ satisfying the conditions in Proposition~\ref{intro:Quillenexact}.
\end{itemize}
The bijection from $(1)$ to $(2)$ sends $\A$ to the pair $(\P, \I)$ where $\P$ is  the full subcategory on the projectives of $\A$ and $\I$ is the full subcategory of projective-injectives. 
The inverse bijection sends $(\P,\I)$ to the $\tau_{\leq 0}$-truncation of the  dg subcategory $\A^{(d+1)}_{\P,\I}$
 of $\pretr(\P)$.
\end{corollary}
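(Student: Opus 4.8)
The plan is to derive the corollary by restricting the bijection of Theorem~\ref{thm:d-Auslander correspondence} to the appropriate sub-collections on either side, using Proposition~\ref{prop:Quillenexact} as the bridge. On the left, a connective $d$-Auslander exact dg category is (equivalent to) a Quillen exact category precisely when it is concentrated in degree $0$; on the right, Proposition~\ref{prop:Quillenexact} identifies the $d$-cluster tilting pairs $(\P,\I)$ for which the associated $\tau_{\leq 0}\A^{(d+1)}_{\P,\I}$ is concentrated in degree $0$ as exactly those satisfying conditions 1)--3). So the two sub-collections correspond to each other under the bijection, and the restricted map is the desired correspondence. It remains to check three things: that the $d$-Auslander hypotheses on the two sides agree, that the properties ``Quillen exact'' (resp.\ conditions 1)--3)) are invariant under the relevant equivalence relations, and that these equivalence relations coincide.

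For a Quillen exact category $\A$, viewed as an exact dg category concentrated in degree $0$, we have $H^0(\A)=\A$ as extriangulated categories, so ``$\A$ is $d$-Auslander as an extriangulated category'' is literally the condition that the exact dg category $\A$ be $d$-Auslander. Thus the objects in item (1) of the corollary are exactly the degree-$0$ representatives of the classes in item (1) of Theorem~\ref{thm:d-Auslander correspondence}. By Proposition~\ref{prop:d-Auslander1}, every such class is represented by $\tau_{\leq 0}\A^{(d+1)}_{\P,\I}$, and by Proposition~\ref{prop:Quillenexact} this representative is concentrated in degree $0$ if and only if $(\P,\I)$ satisfies 1)--3); this pins down the matching of the two sub-collections. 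Since the forward map of Theorem~\ref{thm:d-Auslander correspondence} sends $\A$ to the pair $(\P,\I)$ formed by the projectives and the projective-injectives, and its inverse sends $(\P,\I)$ to $\tau_{\leq 0}\A^{(d+1)}_{\P,\I}$, the formulas in the corollary are inherited verbatim.

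The main point requiring care is the comparison of the equivalence relations. For a pair satisfying 1)--3), these conditions are an intrinsic property of its equivalence class: by Proposition~\ref{prop:Quillenexact} they are equivalent to the canonical object $\overline{\tau_{\leq 0}\A^{(d+1)}_{\P,\I}}$ being concentrated in degree $0$, which is invariant under the equivalence of pairs in Definition~\ref{def:equ}. On the exact-category side one must match the equivalence relation of Definition~\ref{def:equ}~1) with exact equivalence of weakly idempotent completions. For an exact dg category $\A$ concentrated in degree $0$, the closure $\overline{\A}$ under kernels of retractions of $H^0(\A)$ is again concentrated in degree $0$ and is exactly the weakly idempotent completion of the exact category $\A$; moreover a quasi-equivalence between two exact dg categories concentrated in degree $0$ amounts to an exact equivalence of the underlying Quillen exact categories. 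Hence a quasi-equivalence $\overline{\A}\iso\overline{\B}$ between degree-$0$ exact dg categories is precisely an exact equivalence of the weakly idempotent completions of $\A$ and $\B$, which is the equivalence relation used in item (1) of the corollary.

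Combining these observations, the bijection of Theorem~\ref{thm:d-Auslander correspondence} restricts to a bijection between the classes described in items (1) and (2) of the corollary, with the stated forward and inverse maps. The step I expect to be the real obstacle is the identification of $\overline{\A}$ for a degree-$0$ exact dg category with the weakly idempotent completion of the corresponding Quillen exact category, together with the verification that quasi-equivalence of such dg categories reduces to exact equivalence of the underlying exact categories; everything else is a formal restriction of the already-established correspondence.
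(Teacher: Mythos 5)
Your proposal is correct and follows essentially the same route as the paper, which states the corollary without separate proof precisely because it is the restriction of the bijection of Theorem~\ref{thm:d-Auslander correspondence} along the characterization in Proposition~\ref{prop:Quillenexact}. Your careful matching of the equivalence relations --- identifying $\overline{\A}$ of a degree-$0$ exact dg category with the weakly idempotent completion of the underlying Quillen exact category --- is exactly the point the paper addresses in its introductory remark explaining why it restricts to weakly idempotent complete exact categories, so you have supplied the intended argument, just spelled out in more detail.
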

The following proposition shows that Corollary~\ref{cor:Auslander--Iyama correspondence for exact categories} is a partial generalization of the higher Auslander correspondence for exact categories in Theorem~\ref{intro:EbrahimiNasrIsfahani21}.
 \begin{proposition}\label{prop:wicclustertilting}
 Let $\E$ be a weakly idempotent complete exact category with enough injectives. Let $\I $ be the class of injectives of $\E$.
 Let $\M$ be a $d$-cluster tilting subcategory of $\E$, cf~Definition~\ref{def:clustertiltingexact}. 
 Then the following statements hold.
 \begin{itemize}
 \item[(a)] The pair $(\M,\I)$ is $d$-cluster tilting and satisfies the conditions in Proposition~\ref{prop:Quillenexact}.
 \item[(b)] We have an equivalence of exact categories $H^0(\A_{\M,\I}^{(d+1)})\iso \mod_{adm}(\M)$.
 \end{itemize}
 \end{proposition}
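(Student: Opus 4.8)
The plan is to establish part~(a) by checking the conditions of Proposition~\ref{prop:Quillenexact} one at a time together with the covariant finiteness that makes $(\M,\I)$ a $d$-cluster tilting pair, and then to prove part~(b) by constructing an explicit comparison functor. First I record that $\I\subseteq\M$: any injective $I$ has $\Ext^{i}(-,I)=0$ for all $i\geq 1$, so $\Ext^{i}(\M,I)=0$ for $1\leq i\leq d-1$ and hence $I\in\M$ by Definition~\ref{def:clustertiltingexact}(iii). Condition~1) holds because $\M$, being a subcategory of the ordinary exact category $\E$, is concentrated in degree $0$. For condition~2), given $M\in\M$ I pick an inflation $M\rightarrowtail I$ with $I\in\I$ using that $\E$ has enough injectives; an inflation is a monomorphism, so $M^{\wedge}\rightarrow I^{\wedge}$ is a monomorphism in $\Mod\M$. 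These inflations are left $\I$-approximations in $\E$ (a morphism into an injective extends along any inflation), so $\I$ is covariantly finite in $\E$; I would then deduce covariant finiteness of $H^0(\I)$ in $H^0(\A^{(d)}_{\M,\I})$ --- equivalently, by Lemma~\ref{lem:covariantlyfinite}, in $H^0(\C^{(d)}_{\M,\I})$, whose objects are built from injectives and a single $\M$-layer --- by the inductive triangle construction used in the proof of Remark~\ref{rmk:selfdual}, so that $(\M,\I)$ is a $d$-cluster tilting pair.

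The technical heart of part~(a) is condition~3), which requires that exactness of (\ref{seq:Q}) against every $I\in\I$ imply exactness of (\ref{seq:P}) in $\Mod\M$. I would route this implication through acyclicity in $\E$ of the underlying complex $C\colon 0\rightarrow M_{d+1}\rightarrow\cdots\rightarrow M_0$ with terms in $\M$. Since $\E$ is weakly idempotent complete and has enough injectives, I claim that exactness of $\Hom_{\E}(C,I)$ for all injective $I$ already forces $C$ to be acyclic: embedding a term into an injective and invoking the surjectivity in (\ref{seq:Q}) presents each differential as a map whose composite with an inflation is again an inflation, so the weak-idempotent-completeness form of the obscure axiom makes the differential admissible, and exactness of (\ref{seq:Q}) then identifies each image with the subsequent kernel. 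Once $C$ is acyclic, left-exactness of each $\Hom_{\E}(N,-)$ with $N\in\M$ gives exactness of (\ref{seq:P}) at its interior terms. I expect this injective detection of acyclicity to be the main obstacle, since it is exactly where the weak idempotent completeness of $\E$ is indispensable.

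For part~(b), I would define $\Phi\colon H^0(\A_{\M,\I}^{(d+1)})\rightarrow\Mod\M$ on a complex $C$ as above by $\Phi(C)=\cok(f_0^{\wedge})=H^0(C^{\wedge})$. An object of $\A_{\M,\I}^{(d+1)}$ satisfies $\Ext^{\geq 1}(C,\I)=0$, which is precisely exactness of (\ref{seq:Q}); by part~(a) this makes $C$ acyclic in $\E$, so $f_0$ is admissible and $\Phi(C)$ lies in $\mod_{adm}(\M)$. For a quasi-inverse $\Psi$ I would resolve $F\in\mod_{adm}(\M)$ starting from an admissible presentation $M_1\twoheadrightarrow K_0\rightarrowtail M_0$ and iterating right $\M$-approximations: the $d$-cluster tilting property guarantees that the $d$-th syzygy again lies in $\M$, producing a genuinely acyclic complex $0\rightarrow M_{d+1}\rightarrow\cdots\rightarrow M_0$ in $\E$, which by condition~3) lies in $\A_{\M,\I}^{(d+1)}$ and satisfies $\Phi(\Psi(F))=F$. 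Uniqueness of such resolutions up to homotopy makes $\Psi$ well defined into $H^0(\A_{\M,\I}^{(d+1)})$ and yields $\Psi\Phi\cong\Id$, since any $C\in\A_{\M,\I}^{(d+1)}$ is itself an acyclic resolution of $\Phi(C)$. Finally I would verify that $\Phi$ is exact and reflects conflations, which is routine once one notes that a conflation in $H^0(\A_{\M,\I}^{(d+1)})$ induces the long exact cohomology sequence of the corresponding complexes of representables, all of whose higher terms vanish; this gives the desired exact equivalence $H^0(\A_{\M,\I}^{(d+1)})\iso\mod_{adm}(\M)$.
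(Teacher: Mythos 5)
Your overall strategy coincides with the paper's: conditions 1) and 2) of Proposition~\ref{prop:Quillenexact} via enough injectives, condition 3) by embedding $M_{d+1}$ into an injective, using surjectivity in the sequence (\ref{seq:Q}) to produce $h\colon M_d\to I_{d+1}$ with $hf_d$ an inflation, and invoking the weakly-idempotent-complete form of the obscure axiom (\cite[Proposition 7.6]{Buhler10}) to conclude $f_d$ is an inflation, then iterating; and in part (b) the same functor $X\mapsto \cok(f_0^{\wedge})$, with density proved exactly as in the paper via Definition~\ref{def:clustertiltingexact}(ii) and the vanishing of $\Ext^{i}(\M,-)$ on the $d$-th syzygy. (Your explicit quasi-inverse $\Psi$ built from resolutions, in place of the paper's fully-faithful-plus-dense argument, is a legitimate variant, since the complexes of representables are projective resolutions in $\Mod\M$ and the comparison theorem applies.) However, there is one genuine gap: the step ``once $C$ is acyclic, left-exactness of each $\Hom_{\E}(N,-)$ with $N\in\M$ gives exactness of (\ref{seq:P}) at its interior terms'' is false as stated for $d\geq 2$. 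Writing the acyclic complex as spliced conflations $L_{i+1}\rightarrowtail M_i\twoheadrightarrow L_i$ with $L_{d+1}=M_{d+1}$, left-exactness only identifies $\ker\bigl((N,M_i)\to(N,M_{i-1})\bigr)$ with the image of $(N,L_{i+1})\to (N,M_i)$; to replace $(N,L_{i+1})$ by $(N,M_{i+1})$ one needs $(N,M_{i+1})\to(N,L_{i+1})$ to be surjective, and the obstruction lives in $\Ext^{1}(N,L_{i+2})$, which by dimension shifting along the syzygies is controlled by $\Ext^{j}(N,M_{d+1})$ for $j$ up to $d-1$. So the cluster-tilting vanishing $\Ext^{i}(\M,\M)=0$ for $1\leq i\leq d-1$ is indispensable here, and this is precisely the line in the paper's proof (``Since $\Ext^{i}(\M,\M)=0$ for $1\leq i\leq d-1$, we see that (\ref{seq:MM}) is exact''). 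The fix is one sentence, since you use this vanishing elsewhere, but without it the interior exactness of (\ref{seq:P}) simply does not follow.

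Two smaller inaccuracies: first, you justify $\Psi(F)\in\A^{(d+1)}_{\M,\I}$ ``by condition 3)'', but condition 3) is the implication from exactness of (\ref{seq:Q}) to exactness of (\ref{seq:P}), i.e.\ the wrong direction; what you actually need is that $\Hom_{\E}(-,I)$ with $I$ injective sends acyclic complexes to acyclic complexes, which is immediate from injectivity (and surjectivity at the last spot comes from the inflation $M_{d+1}\rightarrowtail M_d$). Second, your covariant-finiteness argument is vaguer than necessary and the appeal to the construction in Remark~\ref{rmk:selfdual} via Lemma~\ref{lem:covariantlyfinite} is off-target: the direct route, which is the one the paper takes, is to note that any $X\in H^0(\A^{(d)}_{\M,\I})$ corresponds (by the condition-3) analysis) to an acyclic complex, so $f_1$ is admissible with cokernel $v_0\colon M_1\twoheadrightarrow L_1$, and composing with an inflation $L_1\rightarrowtail I_0$ yields a left $\I$-approximation $X\to I_0$; no detour through $\C^{(d)}_{\M,\I}$ is needed. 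Finally, ``$\Phi$ reflects conflations'' is indeed routine but deserves the paper's one-line mechanism: lift a conflation in $\mod_{adm}(\M)$ to $\mathcal H^b(\Mod\M)$ by the Horseshoe Lemma and then descend along the fully faithful triangle functor induced by Yoneda.
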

 \begin{proof}
 Clearly $\M$ is concentrated in degree zero and so condition 1) in Proposition~\ref{prop:Quillenexact} holds.
 
 Since $\E$ has enough injectives, for each object $M\in\M$, 
 there is an inflation $M\rightarrowtail I$ where $I\in\I$. 
 The corresponding morphism $M^{\wedge}\rightarrow I^{\wedge}$ is 
 a monomorphism in $\Mod\M$. This shows condition 2).
 
 Consider a complex over $\M$
 \begin{equation}\label{cplx:M}
 0\rightarrow M_{d+1}\xrightarrow{f_d} M_d\xrightarrow{f_{d-1}} \ldots\xrightarrow{f_2} M_2\xrightarrow{f_1} M_1\xrightarrow{f_0} M_0
 \end{equation}
 such that the corresponding complex
\begin{equation}\label{seq:M}
 {\M}(M_0,I)\rightarrow {\M}(M_1,I)\rightarrow {\M}(M_2,I)\rightarrow\ldots\rightarrow {\M}(M_{d},I)\rightarrow {\M}(M_{d+1},I)\rightarrow 0
\end{equation}
is exact for any $I$ in $\I$.
Since $\E$ has enough injectives, there is an inflation $i_{d+1}:M_{d+1}\rightarrowtail I_{d+1}$ where $I_{d+1}\in\I$.
By the exactness of the complex (\ref{seq:M}), there is a morphism $h:M_{d}\rightarrow I_{d+1}$ such that following diagram commutes
\[
\begin{tikzcd}
M_{d+1}\ar[r,"f_{d}"]\ar[d,tail,"i_{d+1}"swap]&M_d\ar[ld,dashed,"h"]\\
I_{d+1}&
\end{tikzcd}
\]
 Since $\E$ is weakly idempotent complete, 
 it follows that $f_d$ is an inflation, cf.~\cite[Proposition 7.6]{Buhler10}.
 Let 
 \[
 \begin{tikzcd}
 M_{d+1}\ar[r,tail,"f_{d}"]&M_{d}\ar[r,two heads,"g_d"] &L_{d}
 \end{tikzcd}
 \]
 be a conflation in $\E$. Then the complex (\ref{cplx:M}) becomes
 \begin{equation}
 \begin{tikzcd}
 M_{d+1}\ar[r,tail,"f_{d}"swap]&M_{d}\ar[rd,two heads,"g_{d}"swap]&&M_{d-1}\ar[r,"f_{d-2}"]&\ldots\ar[r,"f_1"]&M_1\ar[r,"f_0"]&M_0\\
 &&L_d\ar[ru,"v_{d-1}"swap]&
 \end{tikzcd}
 \end{equation}
 Similarly we have that $v_{d-1}:L_d\rightarrow M_{d-1}$ is an inflation. 
 Repeating the above argument, we see that the complex (\ref{cplx:M}) is exact.
 Since $\Ext^{i}(\M,\M)=0$ for $1\leq i\leq d-1$, we see that 
 \begin{equation}\label{seq:MM}
0\rightarrow \M(M,M_{d+1})\rightarrow \M(M,M_d)\rightarrow \ldots\rightarrow \M(M,M_1)\rightarrow \M(M,M_0)
 \end{equation}
 is exact for each $M\in\M$. This shows condition 3).
 
 We identify $\pretr(\M)$ with $\C^b_{dg}(\M)$ thanks to condition 1).
 Let $X$ be an object in 
 \[
 \A_{\M,\I}^{(d)}=\M\ast\Sigma\M\ast\cdots\ast\Sigma^{n-1}\M\ast\Sigma^d\M\cap \ker\Ext^{\geq 1}(-,\I).
 \]
 Then $X$ can be identified with a complex 
  \begin{equation}\label{cplx:M'}
 0\rightarrow M_{d+1}\xrightarrow{f_d} M_d\xrightarrow{f_{d-1}} \ldots\xrightarrow{f_2} M_2\xrightarrow{f_1} M_1
 \end{equation}
  such that the corresponding complex
  \begin{equation}\label{seq:M'}
 {\M}(M_1,I)\rightarrow {\M}(M_2,I)\rightarrow {\M}(M_3,I)\rightarrow\ldots\rightarrow {\M}(M_{d},I)\rightarrow {\M}(M_{d+1},I)\rightarrow 0
\end{equation}
   is exact for any $I\in\I$, where $M_i$ is in degree $-i+1$.
By the above, we see that the complex (\ref{cplx:M'}) is exact and in particular the morphism $f_1$ is admissible.
Let $v_0:M_1\rightarrow L_1$ be its cokernel. 
Since $\E$ has enough injectives, there is an inflation $L_1\rightarrowtail I_0$.
Then the following diagram
\[
\begin{tikzcd}
 0\ar[r] &M_{d+1}\ar[r,"f_d"] &M_d\ar[r,"f_{d-1}"] &\ldots \ar[r,"f_2"]& M_2\ar[r,"f_1"]& M_1\ar[d]\\
           &                               &                            &                         &                        & I_0
 \end{tikzcd}
\]
gives rise to a morphism $X\rightarrow I_0$ in $\mathcal H^b(\M)$ which is a left $\I$-approximation.

The Yoneda embedding $\M\rightarrow \Mod\M$ induces a fully faithful functor $G:\mathcal H^b(\M)\rightarrow \mathcal H^b(\Mod\M)$.
Taking $0$-cohomology gives rise to a functor $H: \mathcal H^b(\Mod\M)\rightarrow \Mod\M$.
Let $F$ be the restriction of the functor $HG$ to the full subcategory $H^0(\A_{\M,\I}^{(d+1)})$ of $\mathcal H^b(\M)$.
Then we obtain a functor $F:H^0(\A_{\M,\I}^{(d+1)})\rightarrow \mod_{adm}(\M)$.
Indeed, let $X$ be an object in $\A_{\M,\I}^{(d+1)}$ which corresponds to the complex (\ref{cplx:M}).
Then $F(X)$ is the cokernel of the morphism $M_1^{\wedge}\xrightarrow{f_0^{\wedge}} M_0^{\wedge}$ in $\Mod\M$.
Since $f_0$ is an admissible morphism, it follows that $F(X)\in \mod_{adm}(\M)$.
Since the complex (\ref{seq:MM}) is exact for each $M\in\M$, we see that the functor $F$ is fully faithful.
%\[
%\begin{tikzcd}
 %0\ar[r] &M_{d+1}^{\wedge}\ar[r,"f_d^{\wedge}"] &M_d^{\wedge}\ar[r,"f_{d-1}^{\wedge}"] &\ldots \ar[r,"f_2^{\wedge}"]& M_2^{\wedge}\ar[r,"f_1^{\wedge}"]& M_1^{\wedge}\ar[r,"f_0^{\wedge}"]&M_0^{\wedge}\ar[r,two heads]&F(X)\ar[r]&0\\
  %0\ar[r] &{M'_{d+1}}^{\wedge}\ar[r,"{f'_d}^{\wedge}"] &{M'_d}^{\wedge}\ar[r,"{f'_{d-1}}^{\wedge}"] &\ldots \ar[r,"{f'_2}^{\wedge}"]& {M'_2}^{\wedge}\ar[r,"{f'_1}^{\wedge}"]& {M'_1}^{\wedge}\ar[r,"{f'_0}^{\wedge}"]&{M'_0}^{\wedge}\ar[r,two heads]&F(X')\ar[r]&0
 %\end{tikzcd}
%\]
For any admissible morphism $f_0:M_1\rightarrow M_0$, we may complete it to an exact sequence in $\E$
\[
\begin{tikzcd}
0\ar[r]& L_{d+1}\ar[r,tail]&M_{d}\ar[r]&\ldots\ar[r]&M_2\ar[r,"f_1"]&M_1\ar[r,"f_0"]&M_0
\end{tikzcd}
\]
where $M_i\in\M$ for $0\leq i\leq d$ and which remains exact after applying the functor $\Hom_{\E}(M,-)$ for each $M\in\M$, by item (ii) in Definition~\ref{def:clustertiltingexact}.
It follows that $\Ext^{i}(M,L_{d+1})=0$ for each $1\leq i\leq d-1$ and $M\in\M$. 
Therefore we have that $L_{d+1}\in\M$ and hence the functor $F$ is dense.

Since $G(X)\in \mathcal H^b(\Mod\M)$ is exact except in degree zero, it follows that $F$ is an exact functor. 
Let 
\[
0\rightarrow F(X)\rightarrow F(Y)\rightarrow F(Z)\rightarrow 0
\]
be a conflation in $\mod_{adm}(\M)$ where $X$, $Y$ and $Z$ are in $H^0(\A_{\M,\I}^{(d+1)})$.
By the Horseshoe Lemma, it can be lifted to a conflation in $\mathcal H^b(\Mod\M)$.
Since $G$ is a fully faithful triangle functor, it then can be lifted to conflation in $H^0(\A_{\M,\I}^{(d+1)})$.
Thereofore the functor $F$ is an equivalence of exact categories.
 \end{proof}

\subsection{Abelian categories}
\begin{proposition}\label{prop:abelian}
The exact dg categories in Theorem~\ref{thm:d-Auslander correspondence} $(1)$ which are abelian categories 
correspond to the $d$-cluster tilting pairs $(\P,\I)$ (assumed to be weakly idempotent complete) in $(2)$ where 
\begin{itemize}
\item[1)]  $\P$ is concentrated in degree $0$ {and has $d$-kernels in the sense of Definition~\ref{def:nkernel}} (in particular the category $\mod\P$ of finitely presented right $\P$-modules is abelian, cf.~\cite{Auslander71});
\item[2)]  for each object $P$ in $\P$, there is a monomorphism $P^{\wedge}\rightarrow I^{\wedge}$ in $\mod\P$ where $I\in\I$;
%\item[3)] {\color{red} $I^{\wedge}$ is injective in $\mod\P$ for $I\in \I$}({\color{red}This is a result of 1), 4) and 5)});
\item[3)]  a complex
\begin{equation}\label{seq:P'}
0\rightarrow P_{d+1}^{\wedge}\xrightarrow{f_{d}^{\wedge}} P_{d}^{\wedge}\rightarrow \ldots\xrightarrow{f_{1}^{\wedge}}  P_1^{\wedge}\xrightarrow{f_{0}^{\wedge}} P_0^{\wedge}
\end{equation}
in $\mod\P$ is exact {iff} the corresponding complex
\begin{equation}\label{seq:Q'}
 \Hom_{\P}(P_0,I)\rightarrow \Hom_{\P}(P_1,I)\rightarrow \ldots\rightarrow\Hom_{\P}(P_{d},I)\rightarrow \Hom_{\P}(P_{d+1},I)\rightarrow 0
\end{equation}
is exact for any $I\in\I$.
%\item[5)] each morphism in $\P$ admits a $d$-kernel in the sense of Definition~\ref{def:nkernel}.
\end{itemize}
In this case, the exact dg category $\tau_{\leq 0}\A_{\P,\I}^{(d+1)}$ is equivalent to the abelian category $\mod\P$ with $I^{\wedge}$, $I\in\I$, the projective-injective objects.
\end{proposition}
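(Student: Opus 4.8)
The plan is to reuse the machinery of Propositions~\ref{prop:Quillenexact} and~\ref{prop:wicclustertilting}, upgrading ``Quillen exact'' to ``abelian''. Since an abelian category is in particular Quillen exact, Corollary~\ref{cor:Auslander--Iyama correspondence for exact categories} together with Proposition~\ref{prop:Quillenexact} lets me assume throughout that $(\P,\I)$ already satisfies conditions 1)--3) of Proposition~\ref{prop:Quillenexact}; in particular $\P$ is concentrated in degree $0$, I identify $\pretr(\P)$ with $\C^b_{dg}(\P)$, and $\A_{\P,\I}^{(d+1)}$ with the dg category of complexes $0\to P_{d+1}\to\cdots\to P_0\to 0$ for which (\ref{seq:Q'}) is exact for all $I\in\I$. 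The organizing tool is the presentation functor $F\colon H^0(\A_{\P,\I}^{(d+1)})\to\Mod\P$ built in the proof of Proposition~\ref{prop:wicclustertilting}, namely $X=[0\to P_{d+1}\to\cdots\to P_0]\mapsto \cok(P_1^{\wedge}\xrightarrow{f_0^{\wedge}}P_0^{\wedge})$; its essential image always lies in $\mod\P$, and the goal is to show that it is an equivalence onto \emph{all} of $\mod\P$, carrying the canonical exact structure to the abelian one, exactly when 1)--3) hold.

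For the implication (2)$\Rightarrow$(1) I assume 1)--3). Condition 1) makes $\mod\P$ abelian (\cite{Auslander71}). Full faithfulness of $F$ is verbatim as in Proposition~\ref{prop:wicclustertilting}: for $X\in\A_{\P,\I}^{(d+1)}$ the defining exactness of (\ref{seq:Q'}) forces, through the ``if'' half of condition 3), the representable complex (\ref{seq:P'}) to be a genuine projective resolution of $F(X)$ in $\mod\P$, whence $\Hom_{\mathcal H^b(\P)}(P_{\bullet},Q_{\bullet})\cong\Hom_{\mathcal D^b(\mod\P)}(F(X),F(Y))\cong\Hom_{\mod\P}(F(X),F(Y))$. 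Density is where the new hypotheses enter: given $G\in\mod\P$ with presentation $P_1^{\wedge}\to P_0^{\wedge}\to G\to 0$, the $d$-kernel property of condition 1) extends $f_0$ to a complex $0\to P_{d+1}\to\cdots\to P_1\to P_0$ making (\ref{seq:P'}) exact, and the ``only if'' half of condition 3) then yields exactness of (\ref{seq:Q'}), so this complex is an object $X\in\A_{\P,\I}^{(d+1)}$ with $F(X)\cong G$. Finally the description of $F$ through the Yoneda embedding and $0$-th cohomology shows it is exact and reflects exactness, so it is an equivalence of exact categories; hence $H^0(\A_{\P,\I}^{(d+1)})\simeq\mod\P$ is abelian with the representables $P^{\wedge}$ as projective objects, and transporting the projective--injectives of $H^0(\A_{\P,\I}^{(d+1)})$ (the objects of $\J_{\P,\I}$) across $F$ identifies the $I^{\wedge}$, $I\in\I$, as the projective--injective objects of $\mod\P$, as claimed.

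For the implication (1)$\Rightarrow$(2) I assume $\A=\tau_{\leq0}\A_{\P,\I}^{(d+1)}$ abelian. The $d$-kernel property follows from the bound on global dimension: given $f_0\colon P_1\to P_0$ in $\P$, choose a projective resolution $0\to P_{d+1}\to\cdots\to P_1\to P_0\to\cok(f_0)\to 0$ in the abelian category $\A$, of length $\leq d+1$ because $\A$ is $d$-Auslander, and apply the exact functor $\Hom_{\A}(Q,-)$ for $Q\in\P$, using $\Hom_{\A}(Q,P_i)=\Hom_{\P}(Q,P_i)$, to see that $P_{d+1}\to\cdots\to P_1$ is a $d$-kernel of $f_0$; this gives condition 1), and condition 2) is then immediate since a monomorphism in $\Mod\P$ between the finitely presented objects $P^{\wedge},I^{\wedge}$ is a fortiori one in the abelian subcategory $\mod\P$. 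For the remaining ``only if'' half of condition 3), note first that $F$ is fully faithful (by the available ``if'' half) and exact, hence right exact, and is the identity on representables; since every $G\in\mod\P$ is a cokernel of a map between representables, $F$ is essentially surjective, so $F\colon\A\iso\mod\P$. Now if (\ref{seq:P'}) is exact it is a length-$(d+1)$ projective resolution of $G=\cok(P_1^{\wedge}\to P_0^{\wedge})$, whose preimage $X'=F^{-1}(G)\in\A_{\P,\I}^{(d+1)}$ carries (via the ``if'' half) another such resolution; two length-$(d+1)$ projective resolutions of $G$ are homotopy equivalent, so the complex $P_{\bullet}$ is isomorphic to $X'$ in $\tr(\P)$, and since membership in $\A_{\P,\I}^{(d+1)}$ is invariant under isomorphism in $\tr(\P)$ we get $P_{\bullet}\in\ker\Ext^{\geq1}(-,\I)$, i.e.\ (\ref{seq:Q'}) is exact.

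The step I expect to be the main obstacle is pinning down that $F$ lands on \emph{all} of $\mod\P$ and transports the canonical exact structure of $H^0(\A_{\P,\I}^{(d+1)})$ precisely onto the \emph{maximal} (abelian) exact structure of $\mod\P$, rather than onto some proper extension-closed exact subcategory such as $\mod_{adm}(\P)$. Concretely one must check that the complex produced from a $d$-kernel of an arbitrary projective presentation both lies in $\P\ast\Sigma\P\ast\cdots\ast\Sigma^{d+1}\P$ \emph{and} satisfies the $\ker\Ext^{\geq1}(-,\I)$ condition---this coincidence being exactly the content of the ``iff'' in condition 3)---and that $F$ not only preserves but reflects conflations, so that no residual admissibility constraint survives. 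Disentangling this matching of exact structures is what genuinely separates the abelian case from the Quillen-exact case of Proposition~\ref{prop:Quillenexact}.
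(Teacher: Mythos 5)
Your proof is correct, and its overall scaffolding --- reducing to the Quillen-exact situation of Proposition~\ref{prop:Quillenexact} and organizing both directions around the presentation functor $F\colon X\mapsto \cok(f_0^{\wedge})$ from Proposition~\ref{prop:wicclustertilting} --- matches the paper's. You diverge at two genuine points. For the ``only if'' half of condition 3), the paper argues by induction on the brutal truncations $0\to P_{d+1}\to\cdots\to P_i\to 0$ of an exact complex (\ref{seq:P'}): the map from the truncated object to $P_{i-1}$ is a monomorphism of the abelian category, hence an inflation, hence its cone again lies in $\A^{(d+1)}_{\P,\I}$. You instead first establish essential surjectivity of $F$ (your ordering is sound: right-exactness plus the action on representables does this without circularity) and then invoke homotopy uniqueness of projective resolutions, lifting the comparison through the fully faithful Yoneda embedding to get $P_{\bullet}\cong F^{-1}(G)$ in $\tr(\P)$ and concluding by isomorphism-invariance of $\A^{(d+1)}_{\P,\I}$; this is shorter but front-loads the equivalence $\A\simeq\mod\P$, whereas the paper's truncation induction stays inside the exact dg structure. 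For item 1), you derive $d$-kernels directly by resolving $\ker(f_0)$ in $\A$ (projective dimension $\leq d-1$ by dimension shift from positive global dimension $\leq d+1$) and applying $\Hom_{\A}(Q,-)$; the paper only explicitly exhibits weak kernels, so your version is actually more complete here. Your closing worry about matching exact structures is resolved exactly as you suggest, by the Horseshoe-Lemma lifting already carried out in Proposition~\ref{prop:wicclustertilting}.
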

\begin{proof}
Suppose that $\tau_{\leq 0}\A_{\P,\I}^{(d+1)}$ is abelian.
Then for each morphism $Q_1\rightarrow Q_2$ in $\P$, there is a kernel $X\in \tau_{\leq 0}\A_{\P,\I}^{(d+1)}$.
Let us assume that $X$ is given by the complex (\ref{seq:X}). Then we have an exact sequence of Hom spaces in $\tr(\P)$
\[
\Hom(P,P_1)\rightarrow \Hom(P,P_0)\rightarrow \Hom(P,X)\rightarrow 0
\]
and hence an exact sequence of Hom spaces in $\P$
\[
\Hom(P,P_0)\rightarrow \Hom(P,Q_1)\rightarrow \Hom(P,Q_2).
\]
This shows that the category $\P$ has weak kernels.

If a morphism $Q_1\rightarrow Q_2$ in $\P$ is such that 
the corresponding morphism in $\mod\P$ is a monomorphism, 
then it is a monomorphism in $\tau_{\leq 0}(\A_{\P,\I}^{(d+1)})$ 
and hence an inflation in the exact dg structure. 
Since the exact dg structure is inherited from $\pretr(\P)$, 
it follows that the mapping cone of the morphism $Q_1\rightarrow Q_2$ is in $\A^{(d+1)}_{\P,\I}$. 
Then we have that the map$\Hom_{\P}(Q_2,I)\rightarrow \Hom_{\P}(Q_1,I)$ is a surjection for each $I\in\I$. 
This shows that $I^{\wedge}$ is injective in $\mod\P$ for $I\in \I$.

For the ``only if " part of item 3), we observe that if the complex (\ref{seq:P'}) is exact, 
then the object given by the complex
\begin{equation}\label{seq:truncatedX}
\ldots\rightarrow 0\rightarrow P_{d+1}\rightarrow P_d\rightarrow\ldots\rightarrow P_{i}\rightarrow 0\rightarrow \ldots,
\end{equation}
where $P_j$ lies in degree $j-i$ for each $j\geq i\geq 1$, 
is an object in $\A_{\P,\I}^{(d+1)}$ by induction on $i$. 
The morphism from the above object to $P_{i-1}$ is a monomorphism in $\tau_{\leq 0}(\A_{\P,\I}^{(d+1)})$. 
Hence we argue as above that its mapping cone also lies in $\tau_{\leq 0}(\A_{\P,\I}^{(d+1)})$, i.e.~the complex~(\ref{seq:truncatedX}) is an object in $\A_{\P,\I}^{(d+1)}$ where $i$ is replaced by $i-1$.
Then by the definition of $\A_{\P,\I}^{(d+1)}$, 
we see that the complex (\ref{seq:Q'}) is exact. 
This shows item 3).

We claim that $\tau_{\leq 0}(\A_{\P,\I}^{(d+1)})$ is equivalent to $\mod\P$ as an exact category.

The map sending a complex (\ref{seq:X}) to the cokernel of 
the map $P_1^{\wedge}\rightarrow P_0^{\wedge}$ in $\mod\P$ 
induces a fully faithful exact functor $\tau_{\leq 0}(\A_{\P,\I}^{(d+1)})\rightarrow \mod\P$.
Since both categories are abelian categories, it is dense and hence an equivalence of categories.
So $\mod\P$ is a $d$-Auslander abelian category.

Conversely, assume that items 1)--3) hold.
By item 1), we identify $\tr(\P)$ with the homotopy category of bounded complexes over $\P$.
Then by defintion the objects in $\A_{\P,\I}^{(d+1)}$ are exactly given by the complexes (\ref{seq:X}) such that the corresponding complex (\ref{seq:Q'}) is exact and hence by item 3) if and only if the complex (\ref{seq:P'}) is exact.
The above functor $\tau_{\leq 0}(\A_{\P,\I}^{(d+1)})\rightarrow \mod\P$ is thus a fully faithful functor.
By item 1), objects in $\mod\P$ have projective dimension at most $d+1$ and hence the above functor is dense.
Therefore $\tau_{\leq 0}(\A_{\P,\I}^{(d+1)})\rightarrow \mod\P$ is an equivalence of categories.
\end{proof}

The following is a direct consequence of Proposition~\ref{prop:wicclustertilting} and Proposition~\ref{prop:abelian}.
\begin{corollary}\label{cor:abelianclustertilting}
Let $\E$ be an abelian category with enough injectives and let $\I$ be the class of injectives of $\E$. 
 Let $\M$ be a $d$-cluster tilting subcategory of $\E$, cf.~Definition~\ref{def:clustertiltingexact}. 
Then the pair $(\M,\I)$ is $d$-cluster tilting and satisfies the conditions in Proposition~\ref{prop:abelian}.
\end{corollary}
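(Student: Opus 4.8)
The plan is to read off the statement from the two cited propositions: Proposition~\ref{prop:wicclustertilting} produces the $d$-cluster tilting pair together with a concrete model $\mod_{adm}(\M)$ for the associated category, and Proposition~\ref{prop:abelian} recognizes when such a category is abelian. First I would note that an abelian category is idempotent complete, hence weakly idempotent complete, and that $\M$, being closed under direct summands, inherits weak idempotent completeness. Thus Proposition~\ref{prop:wicclustertilting} applies and yields immediately that $(\M,\I)$ is a $d$-cluster tilting pair satisfying the conditions of Proposition~\ref{prop:Quillenexact}; in particular $\tau_{\leq 0}\A_{\M,\I}^{(d+1)}$ is Quillen exact, so concentrated in degree zero, while part (b) gives an equivalence of exact categories $H^0(\A_{\M,\I}^{(d+1)})\iso \mod_{adm}(\M)$.

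Next I would identify $\mod_{adm}(\M)$ with the full finitely presented module category $\mod\M$. Since $\E$ is abelian, every morphism $f\colon X\to Y$ of $\E$ factors as $X\twoheadrightarrow \im f\rightarrowtail Y$ and is therefore admissible, so the admissibility clause in Definition~\ref{def:admpre} is vacuous and $\mod_{adm}(\M)=\mod\M$. That $\mod\M$ is abelian follows, by Auslander's criterion \cite{Auslander71}, once $\M$ is shown to have weak kernels: given $f\colon M^0\to M^1$ in $\M$, form the kernel $K\rightarrowtail M^0$ in $\E$ and a right $\M$-approximation $M'\twoheadrightarrow K$ provided by Definition~\ref{def:clustertiltingexact}(ii), whereupon the composite $M'\to M^0$ is a weak kernel of $f$ in $\M$. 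Hence $H^0(\A_{\M,\I}^{(d+1)})\cong\mod\M$ is abelian, and combined with the degree-zero concentration this says exactly that $\tau_{\leq 0}\A_{\M,\I}^{(d+1)}$ is an abelian category.

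It then remains to apply Proposition~\ref{prop:abelian} in the direction characterizing abelian objects: the exact dg categories of Theorem~\ref{thm:d-Auslander correspondence}~(1) that are abelian correspond precisely to the pairs satisfying conditions 1)--3). As we have just exhibited $(\M,\I)$ as such a pair, conditions 1)--3) hold automatically; in particular the existence of $d$-kernels for $\M$ comes for free from the correspondence rather than being verified by hand. I expect no real obstacle here: the only genuinely checkable points are the identification $\mod_{adm}(\M)=\mod\M$ and the construction of weak kernels, both routine once weak idempotent completeness is secured, which is why the statement is indeed a direct consequence of the two propositions.
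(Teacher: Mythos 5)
Your proposal is correct and follows essentially the paper's own route: the paper states the corollary as a direct consequence of Proposition~\ref{prop:wicclustertilting} and Proposition~\ref{prop:abelian}, and your unpacking---weak idempotent completeness of the abelian $\E$ so that Proposition~\ref{prop:wicclustertilting} applies, the identification $\mod_{adm}(\M)=\mod\M$ because every morphism in an abelian category is admissible, abelianness of $\mod\M$ via weak kernels (kernel in $\E$ followed by a right $\M$-approximation from Definition~\ref{def:clustertiltingexact}(ii)), and then the forward direction of Proposition~\ref{prop:abelian}---is precisely how that ``direct consequence'' is meant. The only point you gloss, harmlessly and at the paper's own level of detail, is that idempotent completeness of $\M$ (summand-closed in the abelian $\E$) gives $\Q_{\M,\I}=\M$ and that the exact structure on $\mod_{adm}(\M)=\mod\M$ is the full abelian one, so the characterization in Proposition~\ref{prop:abelian} applies to the pair $(\M,\I)$ itself rather than merely to an equivalent pair.
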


It follows from Corollary~\ref{cor:abelianclustertilting} that the following is a generalization of the higher Auslander correspondence for abelian categories \cite[Theorem 8.23]{Beligiannis15}.
\begin{corollary}\label{cor:Auslander--Iyama correspondence abelian}
There is a bijective correspondence between the following:
\begin{itemize}
\item[(1)] equivalence classes of abelian categories $\A$ which are $d$-Auslander as extriangulated categories;
\item[(2)] equivalence classes of $d$-cluster tilting pairs $(\P,\I)$ which satisfy the conditions in Proposition~\ref{prop:abelian}.
\end{itemize}
The bijection from $(1)$ to $(2)$ sends $\A$ to the pair $(\P,\I)$ formed by  the full subcategory $\P$ on the projectives of $\A$ and its full subcategory $\I$ of projective-injectives. 
The inverse bijection sends $(\P,\I)$ to the $\tau_{\leq 0}$-truncation of the  dg subcategory $\A^{(d+1)}_{\P,\I}$
 of $\pretr(\P)$.
\end{corollary}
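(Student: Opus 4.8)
The plan is to deduce this corollary by restricting the bijection of Theorem~\ref{thm:d-Auslander correspondence} to the subclasses singled out by Proposition~\ref{prop:abelian}, in exactly the way Corollary~\ref{cor:Auslander--Iyama correspondence for exact categories} is obtained from Proposition~\ref{prop:Quillenexact}. First I would regard an abelian category $\A$ as a connective exact dg category concentrated in degree zero; then the hypothesis that $\A$ is $d$-Auslander as an extriangulated category says precisely that $\A$ belongs to class $(1)$ of Theorem~\ref{thm:d-Auslander correspondence}. By Proposition~\ref{prop:abelian}, the objects of class $(1)$ which are abelian categories are exactly those whose associated pair $(\P,\I)$ satisfies conditions 1)--3) of that proposition, and conversely each such pair produces, via the identification $\tau_{\leq 0}\A_{\P,\I}^{(d+1)}\simeq \mod\P$, an abelian category. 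Thus at the level of objects the bijection of Theorem~\ref{thm:d-Auslander correspondence} restricts to a bijection between the two classes in the statement.

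The second step is to verify that this restriction is compatible with the two equivalence relations. On the side of pairs the equivalence relation is unchanged, so nothing is needed there. On the side of abelian categories the equivalence relation inherited from Definition~\ref{def:equ} is the quasi-equivalence $\overline{\A}\iso\overline{\B}$; here I would use that every abelian category is idempotent complete, so that $H^0(\A)=\A$ already coincides with its closure under kernels of retractions inside $\D^b(\A)$. Consequently $\overline{\A}$ is quasi-equivalent to $\A$ itself, and $\overline{\A}\iso\overline{\B}$ holds if and only if $\A$ and $\B$ are equivalent as abelian categories. This simultaneously shows that being abelian is constant along equivalence classes (so that the relevant part of class $(1)$ is a union of equivalence classes) and that the restricted relation is the expected equivalence of abelian categories.

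Finally, the explicit form of the two maps is inherited verbatim from Theorem~\ref{thm:d-Auslander correspondence}: one direction sends $\A$ to the pair $(\P,\I)$ given by its projectives and projective-injectives, and the inverse sends $(\P,\I)$ to $\tau_{\leq 0}\A_{\P,\I}^{(d+1)}$, which by the last assertion of Proposition~\ref{prop:abelian} is identified with $\mod\P$ with the $I^{\wedge}$, $I\in\I$, as projective-injective objects. The only point that genuinely requires care is the compatibility of the equivalence relations in the second step; the rest is a formal restriction of the already established correspondence.
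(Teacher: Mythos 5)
Your proposal is correct and matches the paper's (implicit) argument: the corollary is obtained exactly by restricting the bijection of Theorem~\ref{thm:d-Auslander correspondence} along the characterization in Proposition~\ref{prop:abelian}, just as Corollary~\ref{cor:Auslander--Iyama correspondence for exact categories} is obtained from Proposition~\ref{prop:Quillenexact}. Your extra observation that idempotent completeness of abelian categories forces $\overline{\A}\simeq\A$, so the equivalence relation of Definition~\ref{def:equ} reduces to equivalence of abelian categories, is precisely the point the paper handles for exact categories by passing to weakly idempotent completions, and you resolve it correctly.
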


\begin{example}[Auslander-Iyama correspondence]\label{exm:Auslander--Iyama correspondence}
Let $R$ be a commutative Artin ring. 
Let us fix an $R$-algebra $\Lambda$. 
Let $\I$ be the full subcategory of injective $\Lambda$-modules. 
Let $\P=\add M$ for some $\Lambda$-module $M$. 
We will prove that the $d$-Auslander category $\tau_{\leq 0}\A_{\P,\I}^{(d+1)}$ is equivalent to $\mod \P\iso\mod\Gamma$, where $\Gamma$ is the endomorphism algebra of $M$, if and only if $M$ is a $d$-cluster tilting module.
% in the sense of Iyama~\cite{Iyama07a}. 
In this case, the abelian category $\mod \P$ is $d$-Auslander.

The ``if" part follows from Corollary~\ref{cor:abelianclustertilting}.
Let us show the ``only if" part. 
First, we observe that $\Ext^{i}(M,M)=0$ for $1\leq i\leq d-1$. 
We take an injective coresolution of $M$
\[
0\rightarrow M\rightarrow I_{d}\rightarrow \ldots\rightarrow I_{0}.
\]
By Proposition~\ref{prop:abelian} 3), it remains exact after applying the functor $\Hom_{\Lambda}(M,-)$. 
Therefore $\Ext^{i}(M,M)=0$ for $1\leq i\leq d-1$.

We observe that each $\Lambda$-module $K$ admits a resolution
\begin{equation}\label{seq:K}
0\rightarrow M_{d+1}\rightarrow M_{d}\rightarrow \ldots\rightarrow M_2\rightarrow K\rightarrow 0
\end{equation}
which remains exact when we apply to it the functor $\Hom(M,-)$.  %$\proj\Lambda$ is contained in $\P$.
To see this, let us take an injective copresentation of $K$
\[
0\rightarrow K\xrightarrow{h} I_1\xrightarrow{f_0} I_0
\]
Then we complete it to a projective resolution of $\cok(f_{0}^{\wedge})\in\mod\P$
\[
0\rightarrow M_{d+1}^{\wedge}\rightarrow\ldots\rightarrow M_2^{\wedge}\xrightarrow{f_1^{\wedge}} I_1^{\wedge}\xrightarrow{f_0^{\wedge}} I_0^{\wedge}.
\]
Notice that the functor $\Hom(-,I)$ preserves and reflects exactness for $I$ an injective cogenerator in $\mod\Lambda$. 
Therefore by Proposition~\ref{prop:abelian} 3), the image of the map $f_1:M_2\rightarrow I_1$ is $K$ and we get the required resolution (\ref{seq:K}). 

If we have $\Ext^{i}_{\Lambda}(K,M)=0$ for $1\leq i\leq d-1$, then by applying a dimension shift argument to the sequence (\ref{seq:K}) we see that $K\in \add M$.

Suppose we have $\Ext^{i}_{\Lambda}(M,K)=0$ for $1\leq i\leq d-1$.
Since the subcategory $\I$ of injectives is contained in $\P=\add M$, we have an exact sequence
\begin{equation}\label{seq:ME}
0\rightarrow K\xrightarrow{h} M_{d}\rightarrow M_{d-1}\rightarrow \ldots\rightarrow M_{2}\rightarrow L\rightarrow 0.
\end{equation}
where $M_i\in\P$ for $2\leq i\leq d$ and which remains exact after applying the functor $\Hom(-,M)$.
Then we have $\Ext^j(L,M)=0$ for $1\leq j\leq d-1$ and hence $L\in\add M$.
 Then we apply the functor $\Hom(-,K)$ to the sequence (\ref{seq:ME}) and it remains exact. 
Therefore the morphism $h:K\rightarrow M_d$ splits and $K$ is in $\add M$.
Therefore $M$ is a $d$-cluster tilting module.

Suppose $\Gamma$ is a $d$-Auslander $R$-algebra. 
Let $I$ be an additive generator for the subcategory $\I$ of projective-injectives. 
Put $\Lambda=\End_{\Gamma}(I)$.
Then the functor $D\Hom_{\Gamma}(-,I):\add\Gamma\rightarrow\mod\Lambda$ is fully faithful where $D:\mod\Lambda^{op}\rightarrow \mod\Lambda$ is the duality functor. Indeed, since $\Gamma$ is $d$-Auslander, we have an injective copresentation of $\Gamma$
\[
0\rightarrow \Gamma\rightarrow I_0\rightarrow I_1
\]
which, by the injectivity of $I$, induces an exact sequence of left $\Lambda$-modules
\[
\Hom_{\Gamma}(I_1,I)\rightarrow \Hom_{\Gamma}(I_0,I)\rightarrow \Hom_{\Gamma}(\Gamma,I)\rightarrow 0.
\]
We apply the functor $\Hom_{\Lambda^{op}}(-,\Hom_{\Gamma}(\Gamma,I))$ to the above sequence and we get the following diagram
\[
\begin{tikzcd}[column sep=small]
0\ar[r]& (\Hom_{\Gamma}(\Gamma,I),\Hom_{\Gamma}(\Gamma,I))\ar[r]&(\Hom_{\Gamma}(I_0,I),\Hom_{\Gamma}(\Gamma,I))\ar[r]& (\Hom_{\Gamma}(I_2,I),\Hom_{\Gamma}(\Gamma,I))\\
0\ar[r]&\Hom_{\Gamma}(\Gamma,\Gamma)\ar[r]\ar[u,dashed]&\Hom_{\Gamma}(\Gamma,I_0)\ar[u,"\sim"]\ar[r]&\Hom_{\Gamma}(\Gamma,I_1)\ar[u,"\sim"]
\end{tikzcd}
\]
where the Hom spaces in the first row are in $\mod \Lambda^{op}$.
Therefore the leftmost column is an isomorphism and we see that the functor $D\Hom_{\Gamma}(-,I):\add\Gamma\rightarrow\mod\Lambda$ is fully faithful.
It induces an equivalence between $\I$ and the full subcategory of injective $\Lambda$-modules. By the above, the $\Lambda$-module $D\Hom_{\Gamma}(\Gamma,I)$ is $d$-cluster tilting.
\end{example}

	\def\cprime{$'$} \def\cprime{$'$}
	\providecommand{\bysame}{\leavevmode\hbox to3em{\hrulefill}\thinspace}
	\providecommand{\MR}{\relax\ifhmode\unskip\space\fi MR }
	% \MRhref is called by the amsart/book/proc definition of \MR.
	\providecommand{\MRhref}[2]{%
		\href{http://www.ams.org/mathscinet-getitem?mr=#1}{#2}
	}
	\providecommand{\href}[2]{#2}
	%\begin{thebibliography}{10}
%		

	%\end{thebibliography}

	\bibliographystyle{amsplain}
	\bibliography{stanKeller}

\end{document}